\def\acts{\mathrel{\reflectbox{$\righttoleftarrow$}}}
\def\actsr{\mathrel{\righttoleftarrow}}
\tikzset{
    >=stealth,
    every picture/.style={thick},
    graphs/every graph/.style={empty nodes},
}
\tikzstyle{vertex}=[
\tikzstyle{printersafe}=[decoration={snake,amplitude=0pt}]
\renewcommand{\qq}{\mathbb{Q}}
\newcommand{\zz}{\mathbb{Z}}
\newcommand{\kk}{\mathbb{K}}
\def\O#1.{\mathcal {O}_{#1}}			
\def\pr #1.{\mathbb P^{#1}}				
\def\af #1.{\mathbb A^{#1}}			
\def\ses#1.#2.#3.{0\to #1\to #2\to #3 \to 0}	
\def\xrar#1.{\xrightarrow{#1}}			
\def\K#1.{K_{#1}}						
\def\bA#1.{\mathbf{A}_{#1}}			
\def\bM#1.{\mathbf{M}_{#1}}				
\def\bL#1.{\mathbf{L}_{#1}}				
\def\bB#1.{\mathbf{B}_{#1}}				
\def\bK#1.{\mathbf{K}_{#1}}			
\def\subs#1.{_{#1}}					
\def\sups#1.{^{#1}}
  \newtheorem{introthm}{Theorem}
  \newtheorem{introcor}{Corollary}
  \newtheorem{theorem}{Theorem}[section]
  \newtheorem{lemma}[theorem]{Lemma}
  \newtheorem{proposition}[theorem]{Proposition}
  \newtheorem{notation}[theorem]{Notation}
  \newtheorem{definition}[theorem]{Definition}
\newtheorem{remark}[theorem]{Remark}
\theoremstyle{remark}
\numberwithin{equation}{section}
\begin{document}

\title[Small quotient minimal log discrepancies]{Small quotient minimal log discrepancies}

\author[J.~Moraga]{Joaqu\'in Moraga}
\address{Department of Mathematics, Princeton University, Fine Hall, Washington Road, Princeton, NJ 08544-1000, USA
}
\email{jmoraga@princeton.edu}

\subjclass[2010]{Primary 14E30, 
Secondary 14M25.}
\maketitle

\begin{abstract}
We prove that for each positive integer $n$ there exists a positive number $\epsilon_n$ so that $n$-dimensional toric quotient singularities satisfy the ACC for mld's on the interval $(0,\epsilon_n)$.
In the course of the proof, we will show a geometric Jordan property for finite automorphism groups of affine toric varieties. 
\end{abstract}

\setcounter{tocdepth}{1} 
\tableofcontents

\section{Introduction}

Given a Kawamata log terminal singularity, we can attach to it an invariant called the 
minimal log discrepancy, or mld for short~\cite{Kol13}. This invariant, although easy to define, is really deep and its understanding is closely related to the termination of flips~\cite{Sho04}.
In this direction, one of the most challenging problems is to prove that the minimal log discrepancies on a fixed dimension satisfy the ascending chain condition, whenever the coefficients of the boundary belongs to a set with the descending chain condition.
Furthermore, it is expected that accumulation points come from lower-dimensional minimal log discrepancies.
This conjecture is known as the ACC for mld's.
Alexeev and Shokurov proved the case of klt surface singularities~\cites{Sho91,Ale93}.
Borisov and Ambro proved the case of toric pairs~\cites{Bor97,Amb06}.
For a fixed germ and boundaries with coefficients on a finite set, this is proved by Kawakita~\cite{Kaw14}.
Nakamura proved the case of three dimensional canonical singularitites~\cite{Nak16}.
The case of a smooth point with a boundary is open in dimension at least three~\cite{MN18}.
The case of exceptional singularities was understood in~\cites{Mor18a,HLS19}.
Recently, Mallory proved the case of determinantal singularities~\cite{Mal19}.
Even if some progress has been made towards this conjecture, 
the case of quotient singularities is still open. 
At the same time, it's been understood that small minimal log discrepancies 
are better behaved~\cite{Liu18}.
Recently, some interesting results have been obtained for minimal log discrepancies
of $3$-folds around one~\cites{Jia19,LX19}.
In this article, we give a positive answer to this conjecture in the case
of small quotient minimal log discrepancies.

\begin{introthm}\label{introthm:small-quotient-mlds}
Let $n$ be a positive integer.
There exists a positive integer $\epsilon_n$ satisfying the following.
Consider the set
\[
 {\rm Qmld}_n:=\{ {\rm mld}_x(X) \mid \text{ $x\in X$ is a quotient singularity and $\dim(X)=n$ }\}.
\]
Then, the set 
\[
{\rm Qmld}_n \cap (0,\epsilon_n) 
\]
satisfies the ascending chain condition.
Furthermore, the accumulation points of ${\rm Qlmd}_n\cap (0,\epsilon_n)$
are minimal log discrepancies of quotient singularities of dimension at most $n-1$
with a boundary with standard coefficients.
\end{introthm}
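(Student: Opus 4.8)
The plan is to reduce an arbitrary quotient singularity to a toric one by means of the geometric Jordan property, to rewrite the minimal log discrepancy of the resulting toric model as the minimization of a linear function over the lattice points of a cone, and then to extract a lower-dimensional limit by toric adjunction to a facet. Concretely, I would first write $x\in X$ as $X=V/G$ with $V$ an affine toric variety and $G$ a finite group acting by automorphisms, and apply the geometric Jordan property established earlier to produce a normal subgroup $G_0\trianglelefteq G$ of index at most a constant $J(n)$ depending only on $n$ whose elements act by toric automorphisms. Then $Y:=V/G_0$ is an abelian quotient toric singularity and $X=Y/H$ with $|H|\le J(n)$. Using the behaviour of minimal log discrepancies under the degree-$|H|$ quotient $Y\rar X$, I would compare $\mld_x(X)$ with $\mld_y(Y)$ up to the bounded factor $|H|$; after shrinking $\epsilon_n$, the bound $\mld_x(X)<\epsilon_n$ forces $\mld_y(Y)$ into a correspondingly small interval. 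This is precisely where smallness is used: the distortion of the mld caused by the order-$|H|$ quotient is controllable only near zero. The outcome reduces the theorem to toric pairs, at the cost of allowing a boundary whose coefficients, arising from the ramification of $Y\rar X$, lie in a fixed finite and hence DCC set.

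Next I would present the toric model as $Y=X_\sigma$ for a simplicial cone $\sigma\subset N_\rr\cong\rr^n$ with primitive ray generators $v_1,\dots,v_n$, and let $\phi_\sigma$ be the linear function with $\phi_\sigma(v_i)=1-b_i$, the $b_i$ being the boundary coefficients. The standard toric description of the minimal log discrepancy gives
\[
\mld_y(Y)\;=\;\min\bigl\{\,\phi_\sigma(w)\;:\;w\in N\cap\relint(\sigma)\,\bigr\}.
\]
Hence $\mld_y(Y)<\epsilon_n$ produces an interior lattice point $w_0=\sum_i a_i v_i$ with $\sum_i(1-b_i)a_i<\epsilon_n$, which forces $w_0$ to lie within distance $O(\epsilon_n)$ of the boundary $\partial\sigma$, that is, close to one of the facets $\tau_j=\langle v_1,\dots,\widehat{v_j},\dots,v_n\rangle$.

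I would then run a dimension reduction along that facet. Projecting $N\rar N/\zz v_j$ sends $\sigma$ to an $(n-1)$-dimensional cone defining the toric divisor $D_{\tau_j}$, and toric adjunction expresses $\mld_y(Y)$ through the mld of the pair $(D_{\tau_j},\Delta_{\tau_j})$, where the different $\Delta_{\tau_j}$ has coefficients of the form $1-\tfrac1m$ dictated by the lattice indices of $N$ transverse to $\tau_j$; these are exactly standard coefficients. Letting $\epsilon_n\rar0$ along a sequence of such $Y$ and passing to a subsequence after normalizing, the minimization localizes along a fixed facet and the dimension-$n$ mld's converge to the mld of a limiting $(n-1)$-dimensional toric pair with standard boundary. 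Combined with the ACC for toric pairs with DCC coefficients in dimension $n-1$, which is the theorem of Borisov and Ambro applied one dimension down, a standard limiting argument then rules out any strictly increasing infinite sequence among the small dimension-$n$ mld's and identifies every accumulation point as the mld of a quotient singularity of dimension at most $n-1$ carrying a boundary with standard coefficients.

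The step I expect to be the main obstacle is the geometric Jordan property together with the quantitative control it must supply: one must bound the index $J(n)$ in terms of $n$ alone and, more delicately, show that the bounded quotient $Y\rar X$ perturbs the minimal log discrepancy uniformly, so that a single threshold $\epsilon_n$ works simultaneously for all the groups that can occur. A secondary difficulty is the bookkeeping in the facet adjunction: verifying that a small value of $\phi_\sigma(w_0)$ genuinely concentrates the optimization along one facet, and that the resulting different carries standard rather than merely DCC coefficients.
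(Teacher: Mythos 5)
Your overall strategy (geometric Jordan property to produce a bounded quotient of a toric model, then conclude by toric ACC) is the paper's strategy, but there is a genuine gap at the decisive step, and it is precisely the step the paper flags as the main difficulty. You propose to compare $\mld_x(X)$ with $\mld_y(Y)$ only ``up to the bounded factor $|H|$'' under the quotient $Y\rightarrow X=Y/H$. This is not enough. By Proposition~\ref{prop:ld-finite-quotient}, a divisor $E$ over $X$ centered at $x$ corresponds to an $H$-orbit of divisors $E_Y$ over $Y$ with $a_E(X,B_X)=a_{E_Y}(Y,B_Y)/r_{E_Y}$, where the ramification index $r_{E_Y}\leq |H|$ varies with $E$; hence
\[
\mld_x(X,B_X)\;=\;\min_{E_Y}\,\frac{a_{E_Y}(Y,B_Y)}{r_{E_Y}},
\]
and this minimum need not equal $\mld_y(Y,B_Y)/r$ for any fixed $r$: because of the varying denominators, the minimizing $E_Y$ need not be a divisor computing $\mld_y(Y,B_Y)$, so $\mld_x(X)$ is a quotient of \emph{some} toric log discrepancy --- and the set of all log discrepancies of toric pairs has no chain condition whatsoever --- rather than a quotient of a toric mld. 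The two-sided bound $\mld_y(Y)/|H|\leq \mld_x(X)\leq \mld_y(Y)$ only places $\mld_x(X)$ in an interval, and intervals contain strictly increasing sequences; it transfers neither ACC nor any statement about accumulation points. (Had you the exact relation $\mld_x(X)=\mld_y(Y,B_Y)/r$ with $r\leq N(n)$, you would be done, since a finite union of ACC sets is ACC; but the bounded comparison alone does not yield this, and the paper warns that the mld can genuinely change even under an involution.)

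The paper closes exactly this gap, and doing so is the content of Theorem~\ref{introthm:quotient-diagram}. After the first quotient $Y_0=Z/A$ by the abelian part supplied by the Jordan property, one extracts the divisor $E_{X'}$ computing $\mld_x(X,B)$, pulls it back to non-terminal (hence toric) divisors $F_1,\dots,F_k$ over $Y_0$, and then performs a \emph{second} quotient by the subgroup $N_0$ of elements acting trivially on $F_1\cup\dots\cup F_k$. Proposition~\ref{prop:normal-subgroup-identity} shows $N_0$ lies in the torus, so $Y=Y_0/N_0$ is still formally toric, while by construction the residual group $N_1=N/N_0$ acts on the images $G_i$ of the $F_i$ without generic stabilizer, forcing the ramification index over $E_{X'}$ to equal one and hence $a_{G_i}(Y,B_Y)=\mld_x(X,B)$ \emph{exactly}. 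A final equivariant-extraction argument (Proposition~\ref{prop:N-equiv-extraction}) shows the $G_i$ really compute $\mld_y(Y,B_Y)$: a smaller divisor over $Y$ would descend to a divisor over $X$ with log discrepancy strictly below $\mld_x(X,B)$, a contradiction. Without this exact equality your reduction to the toric case never starts. As a secondary remark, your second half --- re-deriving toric ACC and the accumulation statement via the linear-programming description and facet adjunction --- amounts to re-proving the Borisov--Ambro theorem; the paper simply quotes~\cite{Amb06} for both conclusions, so that portion of your plan is redundant rather than wrong.
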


In the course of the proof, it will be clear that our sequence of real numbers $\epsilon_n$ converges to zero when $n$ goes to infinite.
We expect that $\epsilon_n^{-1}$ behaves like
$n!$ asymptotically.

The proof of our main theorem has its roots in the study of the local topology of klt singularities.
It is well-known that the regional fundamental group of a klt singularity is finite~\cites{Xu14,TX17,Bra20}.
In~\cite{BFMS20}, the authors proved that the regional fundamental group of a $n$-dimensional klt singularity is \textit{almost} abelian of rank at most $n$, i.e., 
it contains an abelian normal subgroup of rank at most $n$ and index bounded by $O_n(1)$.
The study of the regional fundamental group has important implications on the geometry of klt singularities.
For instance, the author proved that $n$-dimensional klt singularities 
with large abelian subgroups of rank $n$ in their  regional fundamental group
are close to being degenerations of toric quotient singularities~\cites{Mor20a,Mor20b}.
In this article, we show how the Jordan property can be used to understand minimal log discrepancies.

Our first aim is to make an improvement of the Jordan property for ${\rm GL}_n(\kk)$
to automorphisms groups of affine toric varieties.
Given an affine variety $X$ and a point $x\in X$,
we denote by $\widehat{X}$ the local completion of $X$ at $x$.
We denote by ${\rm Aut}(X,x)$ (resp. ${\rm Aut}(\widehat{X},x)$)
the automorphism group of $X$ (resp. $\widehat{X}$) which fixes $x$.
We prove the following {\it geometric Jordan property} for affine toric singularities.

\begin{introthm}\label{introthm:affine-geom-jordan}
Let $n$ be a positive integer.
There exists a constant $N(n)$, only depending on $n$,
which satisfies the following.
Let $x\in X$ be a $n$-dimensional affine toric variety
and let $G<{\rm Aut}(X,x)$ be a finite subgroup.
Then, there exists a normal abelian subgroup $A\leqslant G$ of index at most $N(n)$
and rank at most $n$, so that 
$A<\mathbb{G}_m^n \leqslant {\rm Aut}(\widehat{X},x)$
where $\mathbb{G}_m^n$ is a maximal torus in ${\rm Aut}(\widehat{X},x)$.
\end{introthm}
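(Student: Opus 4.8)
The plan is to realize $G$ inside the reductive Levi part of the (pro-algebraic) automorphism group of the germ, and then run a Jordan-type argument there, where the relevant bound depends only on the rank of a maximal torus. For an $n$-dimensional toric germ that rank is exactly $n$, which is what produces a constant depending only on $n$ rather than on the (unbounded) embedding dimension.

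First I would reduce to the case where the cone defining $X$ is full-dimensional, so that $x$ is the unique torus-fixed point and the big torus $T=\mathbb{G}_m^n$ acts faithfully with $x$ in the closure of every orbit. Passing to the completion, $H:={\rm Aut}(\widehat X,x)$ is a pro-algebraic group: the inverse limit of the algebraic groups $H_k$ of automorphisms of the $k$-th infinitesimal neighborhood $\widehat{\mathcal O}_{X,x}/\mathfrak m_x^{k+1}$. I would record that $T\leqslant H$ is a maximal torus by a dimension count: any torus acting faithfully on an $n$-dimensional irreducible germ has dimension at most $n$ (a positive-dimensional generic stabilizer would fix a dense set, hence act trivially on $\widehat X$, contradicting faithfulness), while $T$ already has dimension $n$.

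Next I would invoke the pro-algebraic Levi--Mostow theory in characteristic zero: $H=L\ltimes R_u(H)$ with $R_u(H)$ its pro-unipotent radical and $L$ a finite-dimensional reductive Levi subgroup, all Levi subgroups being conjugate under $R_u(H)$, and a maximal torus of $L$ being a maximal torus of $H$. In particular $L$ is reductive of rank exactly $n$, and $T$ is conjugate to a maximal torus of $L$. Since $G$ is finite it is linearly reductive, so by Mostow's conjugacy theorem some $R_u(H)$-conjugate $uGu^{-1}$ lies in $L$; concretely this is the formal linearization of the $G$-action at $x$ via Cartan's averaging trick, applied compatibly across the truncations $H_k$. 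Replacing $G$ by $uGu^{-1}$, I may assume $G\leqslant L$. Now $G$ is a finite subgroup of a connected reductive group $L$ of rank $n$, so, because there are only finitely many root data of rank at most $n$ (one may also embed $L$ into some ${\rm GL}_{d(n)}$ and apply the Jordan property for ${\rm GL}$ recalled above), the Jordan property holds for $L$ with a constant $N(n)$ depending only on $n$. This produces a normal abelian subgroup $A\leqslant G$ with $[G:A]\leqslant N(n)$. Every element of $A$ has finite order, hence is semisimple, so the commuting semisimple elements of $A$ generate a diagonalizable subgroup lying in a single maximal torus $T'$ of $L$; as all maximal tori of $L$, and hence of $H$, are conjugate, $T'$ is a maximal torus of $H$ isomorphic to $\mathbb{G}_m^n$, and $A\leqslant T'$ has rank at most $n$. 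Conjugating back by $u^{-1}$ returns a subgroup of the original $G$ that is normal, abelian, of index at most $N(n)$, of rank at most $n$, and contained in the maximal torus $u^{-1}T'u\cong\mathbb{G}_m^n$ of ${\rm Aut}(\widehat X,x)$.

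The main obstacle is the structural input of the previous paragraph: making the Levi--Mostow decomposition, and the conjugacy of reductive (in particular finite) subgroups into a finite-dimensional reductive Levi, rigorous for the \emph{pro-algebraic} group $H={\rm Aut}(\widehat X,x)$, which is genuinely infinite-dimensional already at a smooth point. I expect to handle this by working at each finite truncation $H_k$, where the relevant statements are the classical Mostow theorems, and then passing to the inverse limit while checking that the reductive Levi stabilizes to the finite-dimensional rank-$n$ group $L$ and that the conjugating unipotent elements can be chosen compatibly across levels. The remaining steps---the dimension count identifying $T$ as maximal, the rank-$n$ Jordan bound, and the diagonalization of commuting semisimple elements into a maximal torus---are then comparatively routine.
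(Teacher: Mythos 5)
Your high-level route --- conjugate $G$ into the Hauser--M\"uller Levi subgroup $L$ of ${\rm Aut}(\widehat{X},x)$ (the existence and conjugacy statements you want to develop by truncation are exactly what \cite{HM89} provides, and it is the reference the paper itself cites for this), then run a Jordan-type argument in a reductive group whose rank is forced to equal $n$ --- is genuinely different from the paper's proof, which lifts $G$ to the Cox ring and works in block products of general linear groups. But your final step contains a real error. You claim that the commuting semisimple elements of $A$ ``generate a diagonalizable subgroup lying in a single maximal torus $T'$ of $L$.'' This is true in ${\rm GL}_m(\kk)$, and more generally when the derived subgroup of the ambient connected group is simply connected (Steinberg's connectedness theorem), but it is \emph{false} in a general reductive group: finite abelian subgroups need not be toral. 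The standard counterexample is the Klein four-group in ${\rm PGL}_2(\kk)$ generated by the images of $\diag(i,-i)$ and the permutation matrix; these lift to anticommuting elements of ${\rm GL}_2(\kk)$, so no maximal torus contains both. This counterexample occurs in the setting of the theorem: for the $2$-dimensional toric germ $x\in X$ with $X=\{xy=z^2\}=\mathbb{A}^2/\{\pm 1\}$, the Levi of ${\rm Aut}(\widehat{X},x)$ is ${\rm GL}_2(\kk)/\{\pm I\}$, and the Klein four-group above acts on $X$ fixing $x$ but is contained in no maximal torus of ${\rm Aut}(\widehat{X},x)$. Applied to this $G$ (which is already abelian, so Jordan gives $A=G$), your argument asserts a false statement; the theorem survives only because one may pass to a proper subgroup, e.g.\ either $\mathbb{Z}/2$ factor. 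The correct general tool is the Borel--Serre theorem: a finite abelian (indeed supersolvable) subgroup of a reductive group normalizes a maximal torus $T'$, so $A\cap T'$ has index at most $|W(L^0)|$ in $A$, bounded in terms of $n$. But then $A\cap T'$ is no longer automatically normal in $G$, so you must further replace it by the intersection of its $G$-conjugates, redoing the index bookkeeping; none of this is in your writeup.

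A second gap: you assert that $L$ is \emph{connected} reductive of rank $n$, and your uniform Jordan constant (``finitely many root data of rank at most $n$'') relies on this. The Levi of a toric germ is typically disconnected: it contains the finite group of lattice symmetries ${\rm Aut}_\Sigma(M)$ (compare Lemma~\ref{lem:aut-sigma-out} and Proposition~\ref{g-over-g0}). So your constant must also bound the component group $L/L^0$ uniformly in $n$. This can be done --- since the weight spaces of $\widehat{\mathcal{O}}_{X,x}$ are one-dimensional, the centralizer of $\mathbb{T}_X$ in ${\rm Aut}(\widehat{X},x)$ is $\mathbb{T}_X$ itself, whence $L/L^0$ injects into the automorphism group of the root datum of $L^0$, a finite subgroup of ${\rm GL}_n(\zz)$ of order bounded by Minkowski's theorem --- but it is an argument you must supply, not a formality. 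Note that the paper's Cox-ring route is designed precisely to avoid both issues: there the abelian group lands in $\bigoplus_j{\rm PGL}_{s_j}(\kk)$ with $\sum_j(s_j-1)\leq n$, where the toral refinement of the Jordan property is inherited by lifting to $\bigoplus_j{\rm GL}_{s_j}(\kk)$, the component bookkeeping is handled by Lemma~\ref{lem:control-G/G^0}, and the initial bounded-index abelian subgroup is imported from \cite{BFMS20} rather than derived from bounded rank.
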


If $x\in X$ is a smooth point,
the above theorem follows from:
Luna's slice theorem, 
the Jordan property for finite subgroups of ${\rm GL}_n(\kk)$, 
and the diagonalization of abelian actions.
By~\cite{BFMS20}, we know that any finite group $G$ acting on a klt germ
will contain an abelian normal subgroup $A$ of index bounded by $O_n(1)$.
However, the new geometric part of the statement is that,
in the germ of the singularity, we can realize $A$ as acting as a subgroup of a maximal torus.
To prove the above theorem, 
we will translate some results of~\cite{SMS19}
about the automorphisms group of complete toric varieties
to the local setting. 
Then, we will use Cox rings~\cite{ADHL15} to reduce the problem 
to study weighted versions of ${\rm GL}_n(\kk)$.
As a corollary, we obtain the following projective statement.

\begin{introcor}\label{introcor:proj-geom-jordan}
Let $n$ be a positive integer.
There exists a constant $N(n)$, only depending on $n$,
which satisfies the following.
Let $X$ be a $n$-dimensional projective toric variety
and let $G<{\rm Aut}(X)$ be a finite subgroup.
Then, there exists a normal abelian subgroup $A\leqslant G$
of index at most $N(n)$ and rank at most $n$, 
so that $A<\mathbb{G}_m^n \leqslant {\rm Aut}(X)$
where $\mathbb{G}_m^n$ is a maximal torus of ${\rm Aut}(X)$.
\end{introcor}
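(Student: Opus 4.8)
The plan is to reduce Corollary~\ref{introcor:proj-geom-jordan} to Theorem~\ref{introthm:affine-geom-jordan} by passing to an affine cone over $X$, the only real bookkeeping being the extra one-dimensional torus that the cone construction introduces. First I would produce a $G$-invariant very ample line bundle: starting from any ample $L_0$, the bundle $L:=\bigotimes_{g\in G}g^{*}L_0$ is $G$-invariant and ample, and after replacing it by a sufficiently high power I may assume that it is $G$-linearized and that the induced embedding $X\hookrightarrow\mathbb{P}^N$ is projectively normal. The affine cone $C:=\operatorname{Spec}\bigl(\bigoplus_{m\geq 0}H^{0}(X,L^{\otimes m})\bigr)$ is then a normal affine toric variety of dimension $n+1$ with a torus-fixed vertex $v$, and its big torus is $T_C\cong T_X\times\mathbb{G}_m^{\mathrm{sc}}$, where $T_X\cong\mathbb{G}_m^{n}$ is the big torus of $X$ and $\mathbb{G}_m^{\mathrm{sc}}$ is the grading (scaling) torus acting on the cone.

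Next I would lift $G$ to a finite subgroup of $\operatorname{Aut}(C,v)$. The $G$-linearization makes $G$ act linearly on each $H^{0}(X,L^{\otimes m})$, hence on $C$ while fixing $v$; concretely this realizes a finite central extension $\widetilde G$ of $G$ by a group of roots of unity $\mu_k$ inside $\operatorname{GL}_{N+1}$, and intrinsically $\widetilde G$ is a finite subgroup of the graded automorphism group $\operatorname{Aut}_{\mathrm{gr}}(C,v)$, which sits in an exact sequence $1\to\mathbb{G}_m^{\mathrm{sc}}\to\operatorname{Aut}_{\mathrm{gr}}(C,v)\xrightarrow{\pi}\operatorname{Aut}(X)$ with $\pi(\widetilde G)=G$. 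Applying Theorem~\ref{introthm:affine-geom-jordan} to $\widetilde G<\operatorname{Aut}(C,v)$, where $\dim C=n+1$, yields a normal abelian subgroup $\widetilde A\trianglelefteq\widetilde G$ of index at most $N(n+1)$ that is contained in a maximal torus $S$ of $\operatorname{Aut}(\widehat C,v)$.

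Finally I would descend everything to $X$. The scaling torus $\mathbb{G}_m^{\mathrm{sc}}$ is central in $\operatorname{Aut}_{\mathrm{gr}}(C,v)$ and $\widetilde A$ is graded, so the abelian group generated by $\widetilde A$ and $\mathbb{G}_m^{\mathrm{sc}}$ consists of semisimple elements and is therefore contained in a maximal torus; replacing $S$ by it, I may assume $\mathbb{G}_m^{\mathrm{sc}}=\ker\pi\subseteq S$. Since every maximal torus of $\operatorname{Aut}(\widehat C,v)$ has dimension $n+1$, the image $\pi(S)$ is an $n$-dimensional torus, i.e.\ a maximal torus $\mathbb{G}_m^{n}\leqslant\operatorname{Aut}(X)$ (the big torus $T_X$ up to conjugacy). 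Setting $A:=\pi(\widetilde A)$, the group $A\leqslant G$ is abelian and normal (being the image of such a subgroup under $\pi$), satisfies $[G:A]\leq[\widetilde G:\widetilde A]\leq N(n+1)$, and lies in $\pi(S)=\mathbb{G}_m^{n}$; as a finite subgroup of $\mathbb{G}_m^{n}$ it has rank at most $n$. Taking the constant of the corollary to be $N(n+1)$ gives the statement.

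The step I expect to be most delicate is the descent: one must check that the maximal torus $S$ furnished by Theorem~\ref{introthm:affine-geom-jordan} for the completion $\widehat C$ can be chosen to contain $\ker\pi=\mathbb{G}_m^{\mathrm{sc}}$ and to be compatible with the grading, so that $\pi(S)$ is genuinely a maximal torus of $\operatorname{Aut}(X)$ and not merely a proper subtorus; this compatibility is exactly what forces the rank bound $n$ rather than $n+1$ and places $A$ in the big torus of $X$. The supporting facts---$G$-linearizability after replacing $L$ by a power, projective normality of high powers of an ample bundle on a toric variety, and the fact that $T_X$ is a maximal torus of $\operatorname{Aut}(X)$ for a complete toric variety, as in~\cite{SMS19}---are standard, and I would invoke them without proof.
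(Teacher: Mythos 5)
Your route is the same as the paper's: pass to the affine cone over a $G$-invariant ample divisor (an affine toric variety of dimension $n+1$ whose big torus is $\mathbb{T}_0\times\mathbb{T}_X$, with $\mathbb{T}_0$ the scaling torus), apply Theorem~\ref{introthm:affine-geom-jordan} there, and descend along $\mathbb{T}_0$. However, the descent step --- which you yourself single out as the delicate one --- is justified by a false principle. You assert that the abelian group generated by $\widetilde A$ and $\mathbb{G}_m^{\mathrm{sc}}$, consisting of semisimple elements, ``is therefore contained in a maximal torus.'' This is not true in general: the Klein four subgroup of ${\rm PGL}_2(\kk)$ generated by the classes of ${\rm diag}(1,-1)$ and of the permutation matrix $\left(\begin{smallmatrix}0&1\\1&0\end{smallmatrix}\right)$ is abelian and consists of semisimple elements, yet lies in no torus; taking a product with a central $\mathbb{G}_m$ shows that the presence of a positive-dimensional central torus does not remedy this. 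So, as written, the step on which the rank bound and the placement of $A$ in $\mathbb{T}_X$ depend would fail. A second, related gap: your $\pi$ is defined on graded automorphisms of the cone $C$, not on ${\rm Aut}(\widehat C,v)$, so the expression $\pi(S)$ for a maximal torus $S$ of the formal automorphism group needs justification (one must argue that an automorphism commuting with $\mathbb{G}_m^{\mathrm{sc}}$ preserves each weight space of the complete local ring and hence induces a graded automorphism of the cone ring, so it descends to $X$).

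The step is repairable, but only by using information you did not invoke, namely that Theorem~\ref{introthm:affine-geom-jordan} already places $\widetilde A$ inside some maximal torus $S$. Working inside a suitable linear algebraic subgroup of automorphisms of $C$ fixing the vertex (e.g.\ the group $G(C,v)$ of Definition~\ref{def:g(X,x)}, where the proof of the theorem takes place), the connected centralizer $H:=C(\widetilde A)^0$ contains both $S$ and $\mathbb{G}_m^{\mathrm{sc}}$, since each is connected and commutes with $\widetilde A$; as $\widetilde A\subset S\subset H$, the group $\widetilde A$ is central and semisimple in $H$ and hence lies in \emph{every} maximal torus of $H$; a maximal torus of $H$ containing $\mathbb{G}_m^{\mathrm{sc}}$ is then a maximal torus of the ambient group (being $H$-conjugate to $S$) containing both $\widetilde A$ and $\mathbb{G}_m^{\mathrm{sc}}$, and it descends to a maximal torus of ${\rm Aut}(X)$ as desired. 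The paper short-circuits all of this with Remark~\ref{rem:com}: the conjugating element $f$ produced in the proof of Theorem~\ref{introthm:affine-geom-jordan} can be chosen to commute with any subtorus of the big torus with which $G$ commutes --- here $\mathbb{T}_0$ --- so $f$ itself descends to $f_X\in{\rm Aut}(X)$ with $f_X^{-1}Af_X\leqslant \mathbb{T}_X$. Finally, a minor difference: the paper uses a $G$-invariant ample \emph{divisor} and the pullback action on $\bigoplus_{u\geq 0}H^0(X,\mathcal{O}_X(uH))$, so $G$ itself acts on the cone commuting with $\mathbb{T}_0$; your linearization detour through a $\mu_k$-central extension $\widetilde G$ is harmless for the normality and index bookkeeping, but unnecessary.
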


Using Theorem~\ref{introthm:affine-geom-jordan}, and studying 
how finite automorphisms lift to toric blow-ups, 
we prove the following factorization theorem of toric quotient singularities.

\begin{introthm}\label{introthm:quotient-diagram}
Let $n$ be a positive integer.
There exists a constant $\epsilon_n$, only depending on $n$,
which satisfies the following.
Let $x\in (X,B)$ be a $n$-dimensional toric quotient pair.
Assume that ${\rm mld}_x(X,B)\in (0,\epsilon_n)$.
Then, there exist two quotient morphisms of log pairs
\[
z\in Z\rightarrow y\in (Y,B_Y)\rightarrow x\in (X,B_X)
\]
satisfying the following conditions: 
\begin{enumerate}
\item $Z\rightarrow Y$ and $Y\rightarrow X$ are Galois quotients,
\item $Z$ is an affine toric variety,
\item $(Y,B_Y)$ is formally toric at $y$, 
\item the degree of $Y\rightarrow X$ is bounded by $\epsilon_n^{-1}$, and
\item ${\rm mld}_y(Y,B_Y)={\rm mld}_x(X,B_X)$,
\end{enumerate}
In particular, in the interval $(0,\epsilon_n)$, all 
toric quotient minimal log discrepancies
are toric minimal log discrepancies.
\end{introthm}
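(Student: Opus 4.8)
The plan is to present the given toric quotient pair as an explicit quotient of an affine toric variety and then to peel off its abelian part using the geometric Jordan property. By definition of a toric quotient pair I may write $x\in (X,B_X)=(Z/G, B_X)$, where $Z$ is an $n$-dimensional affine toric variety, $z\in Z$ is a point over $x$, and $G<{\rm Aut}(Z,z)$ is a finite subgroup, the boundary being normalized so that the quotient $\rho\colon Z\to X$ is crepant, i.e. $K_Z+B_Z=\rho^*(K_X+B_X)$ with $B_Z$ the toric boundary of $Z$. Applying Theorem~\ref{introthm:affine-geom-jordan} to $G<{\rm Aut}(Z,z)$ produces a normal abelian subgroup $A\trianglelefteq G$ of index at most $N(n)$ and rank at most $n$ with $A<\mathbb{G}_m^n\leqslant {\rm Aut}(\widehat Z,z)$, where $\mathbb{G}_m^n$ is a maximal torus of the completion. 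I then set $Y:=Z/A$ and let $y$ be the image of $z$. Since $A$ acts through a subtorus of ${\rm Aut}(\widehat Z,z)$, the completion $\widehat Y=\widehat Z/A$ is the quotient of a toric variety by a subgroup of its torus, hence again toric, so $(Y,B_Y)$ is formally toric at $y$, which is (3); here $B_Y$ is defined by $K_Z+B_Z=(Z\to Y)^*(K_Y+B_Y)$, so that all three pairs are crepant over one another.

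Because $A$ is normal in $G$, the tower $z\in Z\to y\in (Y,B_Y)\to x\in (X,B_X)$ consists of two Galois quotients, by $A$ and by $H:=G/A$, which gives (1), and $Z$ is affine toric by construction, which is (2). The degree of $Y\to X$ equals $|H|=|G/A|\leqslant N(n)$, so it suffices to choose $\epsilon_n$ with $\epsilon_n^{-1}\geqslant N(n)$, and small enough to serve the accumulation statement of the main theorem, in order to obtain (4). It remains to prove the mld identity (5), which is the heart of the matter.

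For this I would use the transformation of log discrepancies under the finite Galois morphism $f\colon (Y,B_Y)\to (X,B_X)$. Since $K_Y+B_Y=f^*(K_X+B_X)$, every prime divisor $E$ over $x$ has an $H$-orbit of pullbacks $E_Y$ over $y$, and $a_{E_Y}(Y,B_Y)=r_E\,a_E(X,B_X)$, where $r_E\geqslant 1$ is the ramification index of $E_Y$ over $E$ in the extracted models. Consequently ${\rm mld}_y(Y,B_Y)=\min_E r_E\,a_E(X,B_X)\geqslant\min_E a_E(X,B_X)={\rm mld}_x(X,B_X)$, with equality precisely when some divisor computing ${\rm mld}_x(X,B_X)$ pulls back with $r_E=1$. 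This unramifiedness can genuinely fail: even for a map étale in codimension one a toric blow-up can introduce an exceptional divisor along which the lifted group acts as a quasi-reflection, already visible in the doubling of the log discrepancy for the $A_1$ quotient $\mathbb A^2\to\mathbb A^2/\{\pm 1\}$. So (5) is not formal, and the whole content lies in excluding such ramification under the smallness hypothesis.

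The main obstacle, and the one place where ${\rm mld}_x(X,B_X)\in(0,\epsilon_n)$ is essential, is therefore to show that a minimizing divisor pulls back with $r_E=1$. The strategy is to extract the mld-computing valuation by an $H$-equivariant toric blow-up $Y'\to Y$: because $A$ lies in the torus, the relevant valuation is monomial and the associated toric blow-up is canonical, hence preserved by the finite automorphisms, so $H$ lifts to $Y'$ and fixes the extracted divisor $E$. One then argues, via the combinatorics of the monomial valuation together with the boundedness of toric minimal log discrepancies of Borisov and Ambro~\cites{Bor97,Amb06}, that a quasi-reflection of $H$ along $E$ is incompatible with the smallness of ${\rm mld}_x(X,B_X)$: the relation ${\rm mld}_x(X,B_X)=r_E^{-1}{\rm mld}_y(Y,B_Y)$ with $r_E\geqslant 2$ would force ${\rm mld}_x(X,B_X)$ above a positive constant $c(n)$ depending only on $n$, so taking $\epsilon_n<c(n)$ yields $r_E=1$ and hence (5). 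The final assertion is then immediate: by (3) and (5) the number ${\rm mld}_x(X,B_X)={\rm mld}_y(Y,B_Y)$ is the minimal log discrepancy of a formally toric pair, so on $(0,\epsilon_n)$ every toric quotient mld is a toric mld. The quantitative analysis of the equivariant toric blow-up forcing $r_E=1$ is the delicate step; the index bound supplied by the geometric Jordan property is comparatively routine.
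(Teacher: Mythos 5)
Your treatment of items (1)--(4) matches the paper: present $(X,B_X)$ as $Z/G$, apply Theorem~\ref{introthm:affine-geom-jordan} to obtain $A\trianglelefteq G$ of index at most $N(n)$ inside a formal maximal torus, and take the intermediate quotient. You also correctly isolate where the difficulty lies: condition (5) holds exactly when some mld-computing divisor of $X$ pulls back to $Y$ with ramification index $r_E=1$. But your resolution of that difficulty is wrong: it is \emph{not} true that $r_E\geqslant 2$ forces ${\rm mld}_x(X,B_X)$ above a positive constant $c(n)$, so no choice of $\epsilon_n$ can exclude ramification. Concretely, take $Z=\mathbb{A}^2$, $r$ large, $G=\langle \zeta_{2r}\cdot\operatorname{id}\rangle$ and $A=\langle \zeta_{r}\cdot\operatorname{id}\rangle$. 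This $A$ is a legitimate output of Theorem~\ref{introthm:affine-geom-jordan} (normal, abelian, rank $1$, index $2$, contained in the torus), yet $Y:=Z/A=\frac{1}{r}(1,1)$ has ${\rm mld}_{y}(Y)=2/r$, while $X=\frac{1}{2r}(1,1)$ has ${\rm mld}_x(X)=1/r$. Indeed the residual involution acts as $-\operatorname{id}$ on the cone $Y$, hence fixes pointwise the exceptional $\mathbb{P}^1$ of the blow-up of the vertex, which is the unique divisor computing ${\rm mld}_y(Y)$; so the mld-computing divisor of $X$ pulls back with $r_E=2$ while ${\rm mld}_x(X)=1/r$ is arbitrarily small. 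Thus with $Y=Z/A$ condition (5) simply fails, and the quantitative statement you hope to prove (smallness of the mld forcing $r_E=1$) is false.

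The paper's proof does not exclude ramification; it absorbs it into the toric side, and this requires a key lemma absent from your proposal, Proposition~\ref{prop:normal-subgroup-identity}. After extracting the divisor computing ${\rm mld}_x(X,B)$ and pulling back to $Y_0:=Z/A$ (smallness of the mld is used here, together with the degree bound $N(n)$, only to guarantee that the pulled-back divisors $F_1,\dots,F_k$ have log discrepancy $r\cdot{\rm mld}_x(X,B)<1$, hence are toric valuations by Proposition~\ref{prop:non-terminal-toric}), the paper takes $N_0\leqslant N:=G/A$ to be the maximal normal subgroup acting as the identity on $F_1\cup\dots\cup F_k$. Proposition~\ref{prop:normal-subgroup-identity} shows that $N_0$ lies in a one-dimensional subtorus of the formal automorphism group of $Y_0$, so the \emph{further} quotient $Y:=Y_0/N_0$ is still formally toric by Proposition~\ref{prop:quotient-toric}; and by maximality of $N_0$ the residual group $N_1=N/N_0$ acts unramified in codimension one along the images $G_i$ of the $F_i$, whence $a_{G_i}(Y,B_Y)={\rm mld}_x(X,B)$. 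A final equivariant-extraction argument (Proposition~\ref{prop:N-equiv-extraction}) shows that these $G_i$ actually compute ${\rm mld}_y(Y,B_Y)$. Note that in the counterexample above this two-step construction collapses $Y$ to $X$ itself, which is toric --- the correct output. So the intermediate pair cannot be built from the Jordan subgroup $A$ alone; it must be corrected a second time, after the extraction, by the pointwise stabilizer $N_0$, whose membership in the torus is a statement about automorphisms of $Y_0$ (not of $Z$) and is precisely what Proposition~\ref{prop:normal-subgroup-identity} provides.
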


See Definition~\ref{def:formally} for the definition of formally toric pairs
and Definition~\ref{def:toric-quotient} for the definition of toric quotient pairs.
In the case that $x\in X$ is a quotient singularity,
we can always assume that $Z$ is the affine space.
In the above theorem, $(X,B)$ is the log quotient of 
an affine toric variety by a finite group.
In particular, the coefficients of $B$ are standard.
This theorem also proves that in the interval
$(0,\epsilon_n)$ all quotient minimal log discrepancies
are also toric minimal log discrepancies.
Hence, Theorem~\ref{introthm:small-quotient-mlds}
is a direct consequence of Theorem~\ref{introthm:quotient-diagram}.
The main difficulty on proving the above theorem
is controlling the equality ${\rm mld}_y(Y,B_Y)={\rm mld}_x(X,B_X)$.
In general, the minimal log discrepancy can change even
if the Galois cover is an involution.
Using~\cite{Amb09}
and the above theorem, we will obtain the following corollary.

\begin{introcor}\label{introcor:index}
Let $n$ be a positive integer.
There exists a constant $\epsilon_n$, only depending on $n$,
which satisfies the following.
Let $x\in (X,B)$ be a toric quotient pair of dimension $n$
with $m:={\rm mld}_x(X,B)\in (0,\epsilon_n)$.
Then, the Cartier index of $K_X+B$, formally at $x\in X$, 
belongs to a finite set
$\mathcal{F}(m,n)$ which only depends on $m$ and $n$.
\end{introcor}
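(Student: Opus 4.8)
The plan is to transport the question across the factorization furnished by Theorem~\ref{introthm:quotient-diagram} and then reduce the boundedness of the Cartier index to the toric case, where the hypothesis ${\rm mld}_y(Y,B_Y)=m$ forces the combinatorial data into finitely many types. Throughout I work formally at the relevant points, so the whole argument is local.

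First I would apply Theorem~\ref{introthm:quotient-diagram} to $x\in(X,B)$. Since $m\in(0,\epsilon_n)$, we obtain a Galois quotient $\pi\colon y\in(Y,B_Y)\to x\in(X,B_X)$ with $(Y,B_Y)$ formally toric at $y$, with $K_Y+B_Y=\pi^*(K_X+B_X)$, with $d:=\deg\pi\leqslant\epsilon_n^{-1}$, and with ${\rm mld}_y(Y,B_Y)={\rm mld}_x(X,B_X)=m$. This replaces the original quotient germ by a formally toric germ of the same mld, at the cost of a finite cover of controlled degree.

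Second, I would bound the Cartier index $r_Y$ of $K_Y+B_Y$ at $y$. Because $(Y,B_Y)$ is formally toric of dimension $n$ with standard coefficients, both ${\rm mld}_y(Y,B_Y)$ and $r_Y$ depend only on the formal germ, and hence on the combinatorial data of the defining cone together with the ray coefficients of $B_Y$, as in~\cite{Amb09}. The condition ${\rm mld}_y(Y,B_Y)=m$ pins the relevant piecewise-linear invariant to the fixed rational number $m$, which for fixed $n$ confines this combinatorial data to finitely many types up to the unimodular equivalences preserving the germ. In particular $r_Y$ lies in a finite set $\mathcal F_0(m,n)$ depending only on $m$ and $n$.

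Third, I would compare the two indices. Write $r_X$ for the Cartier index of $K_X+B_X$ formally at $x$. Pulling back Cartier divisors is Cartier, so $r_Y\mid r_X$; the content of the corollary is the reverse estimate. Now $r_X$ is the order of the class of $K_X+B_X$ in the local class group $\operatorname{Cl}(\widehat X_x)$, and the order of its image under $\pi^*$ in $\operatorname{Cl}(\widehat Y_y)$ is exactly $r_Y$. Since $\pi$ is a Galois quotient that is étale in codimension one, $\pi_*\pi^*$ is multiplication by $d$ on local class groups, so $\ker\pi^*$ is annihilated by $d$. As $r_Y\,[K_X+B_X]\in\ker\pi^*$, we get $d\,r_Y\,[K_X+B_X]=0$, that is $r_X\mid d\,r_Y$. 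Setting
\[
\mathcal F(m,n):=\{\, s \mid s \text{ divides } d\,r,\ d\leqslant\epsilon_n^{-1},\ r\in\mathcal F_0(m,n)\,\},
\]
a finite set depending only on $m$ and $n$, we conclude $r_X\in\mathcal F(m,n)$.

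The main obstacle is the third step: the trivial divisibility runs the wrong way ($r_Y\mid r_X$), so one must genuinely control how the local Cartier index can \emph{grow} under the quotient $Y\to X$, since $\pi^*$ can turn a non-Cartier class into a Cartier one. The annihilation of $\ker\pi^*$ by $d=\deg\pi$ is what makes the estimate effective, and this is exactly where the degree bound $\deg\pi\leqslant\epsilon_n^{-1}$ from Theorem~\ref{introthm:quotient-diagram} is used; once this is in place, the finiteness in the second step is the standard toric input from~\cite{Amb09}.
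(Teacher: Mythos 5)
Your overall route coincides with the paper's: apply Theorem~\ref{introthm:quotient-diagram}, bound the index of the formally toric pair $(Y,B_Y)$ using~\cite{Amb09}, and then descend the relation along the cover of degree at most $\epsilon_n^{-1}$. Your third step is in fact a correct, more explicit version of what the paper does in one line: from $I(K_Y+B_Y)\sim 0$ and $K_Y+B_Y=\pi^*(K_X+B)$ one gets $\deg(\pi)\,I\,(K_X+B)=\pi_*\pi^*\bigl(I(K_X+B)\bigr)\sim 0$, which is exactly your statement that $\ker\pi^*$ is killed by $d$. Note, however, that your appeal to ``\'etale in codimension one'' is both unjustified and unnecessary: the quotient $Y\to X$ can very well ramify along divisors (the boundaries $B_Y$, $B_X$ are precisely what absorbs this ramification via Riemann--Hurwitz), but the identity $\pi_*\pi^*=\deg(\pi)\cdot\mathrm{id}$ on divisor class groups holds for \emph{any} finite surjective morphism of normal varieties, so your conclusion $r_X\mid d\,r_Y$ stands without that hypothesis.

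The genuine problem is your second step. The claim that fixing ${\rm mld}_y(Y,B_Y)=m$ ``confines this combinatorial data to finitely many types up to unimodular equivalence'' is false, and since your finiteness of $\mathcal F_0(m,n)$ is derived from it (``In particular\dots''), the step fails as written. For a counterexample, consider the three-dimensional cyclic quotient singularities $\tfrac{1}{4s}(1,1,2s-2)$ for $s$ even: a direct computation with the lattice $\mathbb{Z}^3+\mathbb{Z}\cdot\tfrac{1}{4s}(1,1,2s-2)$ shows that every one of them has minimal log discrepancy exactly $\tfrac12$, yet they are pairwise non-isomorphic germs (their local class groups are $\mathbb{Z}/4s$), so there are infinitely many combinatorial types with the same mld even in fixed dimension. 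What is true --- and what is needed, and what~\cite{Amb09}*{Theorem 2.1} actually provides (this is how the paper invokes it) --- is the weaker statement that the \emph{index} is bounded: there is a constant $I(m,n)$ with $I(m,n)(K_Y+B_Y)\sim 0$ formally at $y$; in the example above all members of the family have index $2$ despite the unbounded family of types. So the repair is simple: delete the ``finitely many types'' argument and cite Ambro's index theorem directly for $r_Y\in\mathcal F_0(m,n)$; with that substitution your proof is correct and agrees with the paper's.
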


The above Corollary
gives a positive answer to a conjecture of Shokurov in the case of small quotient minimal log discrepancies.

\subsection*{Acknowledgements} 
The author would like to thanks
Alvaro Liendo,
Antonio Laface,
Florin Ambro, and
Ivan Arzhantsev
for many useful comments.

\section{Preliminaries}

In this section, 
we recall the preliminaries that 
will be used in this article:
Toric geometry, 
toric minimal log discrepancies, 
and automorphisms of toric varieties.
Throughout this article, we work over an algebraically closed field $\mathbb{K}$ of characteristic zero.
We denote by $\mathbb{G}_m^k$ the $k$-dimensional $\mathbb{K}$-torus.
We require Galois morphisms to be finite and surjective.
However, we do not assume Galois morphism to be \'etale.
The rank of a finite group refers to the minimal number of generators.
We will use some standard results about toric geometry~\cites{Ful93,CLS11,Cox95},
linear algebraic groups~\cites{Hum75,Bor91}, and
the minimal model program~\cites{KM98,BCHM10}

\subsection{Toric geometry} 
In this subsection, we recall
some statements about toric geometry, toric blow-ups
and  minimal log discrepancies.

\begin{definition}
{\em
A {\em toric variety} is a normal variety $X$ with an 
open subset isomorphic to $\mathbb{G}_m^n$, 
so that the action of $\mathbb{G}_m^n$ on itself by multiplication extends to all of $X$.
Equivalently, $X$ is a normal variety of dimension $n$ endowed with an effective actionof the $n$-dimensional $\mathbb{K}$-torus.
}
\end{definition}

\begin{definition}
{\em 
Let $N$ be a free finitely generated abelian group and let $M={\rm Hom}(N,\zz)$ be its dual.
We denote by $N_\qq$ (resp $M_\qq$) the associated
$\qq$-vector space $N_\qq:=N\otimes_\zz \qq$ (resp. $M_\qq:=M\otimes_\zz \qq$).
Let $\sigma \subset N_\qq$ be a pointed convex polyhedral cone.
We denote by $\sigma^\vee \subset M_\qq$ its dual.
We write $\kk[\sigma^\vee \cap M]$ the monoid ring of the monoid $(\sigma^\vee,+)$.
The variety $X_\sigma:= {\rm Spec}(\kk[\sigma^\vee \cap M])$ is an affine variety
endowed with an action of ${\rm Spec}(\kk[M])\simeq \mathbb{G}_m^n$.
We say that $X(\sigma)$ is an {\em affine toric variety}.
It is well known that every affine toric variety arises from the choice
of a pointed polyhedral cone in a rational vector space (see, e.g.,~\cite{CLS11}).
}
\end{definition}

\begin{definition}
{\em
A {\em fan of cones} $\Sigma$ on $N_\qq$ is a finite set of pointed convex polyhedral cones
in $N_\qq$ satisfying the following conditions:
\begin{itemize}
\item If $\sigma \in \Sigma$ and $\tau\leqslant \sigma$ is a face, then $\tau \in \Sigma$, 
\item If $\sigma,\sigma' \in \Sigma$, then $\sigma \cap \sigma'\in \Sigma$ is a face of 
both $\sigma$ and $\sigma'$.
\end{itemize}
Given a fan of cones $\Sigma$ on $N_\qq$, we can construct a toric variety as follows:
For each cone $\sigma \in \Sigma$, we can consider the affine toric variety $X(\sigma)$.
If $\tau\leqslant \sigma$ is a face, then $X(\tau)\subset X(\sigma)$ is an open $\mathbb{G}_m^n$-invariant subvariety.
For each two cones $\sigma,\sigma'\in \Sigma$, we can glue the affine toric varieties
$X(\sigma)$ and $X(\sigma')$ along $X(\sigma'\cap\sigma)$.
This process gives us a toric variety which we denote $X(\Sigma)$ and call it
the {\em toric variety associated to the fan} $\Sigma$ in $N_\qq$.
It is well known that every toric variety arises from the choice
of a fan in a rational vector space (see, e.g.,~\cite{CLS11}).
}
\end{definition}

\begin{notation}
{\em
Given a variety $X$, we may endow it with different structures of toric variety.
For instance, if $X$ is toric, any maximal torus of ${\rm Aut}^0(X)$ will give $X$ the structure of a toric variety.
Although, all such different structures differ by automorphisms of $X$.
To avoid ambiguity, given a toric variety $X$ we will denote by $\mathbb{T}_X$ a fixed
torus acting on it, i.e., we will fix a choice of a maximal torus of ${\rm Aut}^0(X)$.
We will say that a morphism from (or to) $X$ is toric if it is toric with respect to $\mathbb{T}_X$.
For instance, in the case of an affine toric variety $X$, fixing $\mathbb{T}_X$ is equivalent
to fixing a choice of $\sigma$ in a rational $\qq$-vector space $N_\qq$.
}
\end{notation}

\begin{definition}\label{def:formally}{\em 
Let $X$ be an affine variety and $x\in X$ be a closed point.
We write $X={\rm Spec}(R)$ and $m\subset R$ for the maximal ideal of $x$.
We denote by $\widehat{X}_x$ the local completion of $X$ at $x$,
i.e., the spectrum of the completion of the localization of $R$ at $m$.

Let $x\in X$ be a closed point on an algebraic variety.
We say that $X$ is {\rm formally toric} at $x$
if there exists an affine toric variety $T$ so that
$\widehat{X}_x\simeq \widehat{T}_{t}$,
where $x_0\in X_0$ is a torus invariant point.
Let $B$ be an effective divisor on $X$ through $x$.
We say that $(X,B)$ is a {\rm formally toric pair}
if the above isomorphism identifies $B$ with
a toric boundary of $T$.
}
\end{definition} 

\subsection{Cox rings}

In this subsection, we recall the Cox ring of a toric variety.

\begin{definition}{\em  
Let $X$ be an algebraic variety which is irreducible, normal, with only constant invertible functions,
and finitely generated Class group.
We define its {\em Cox ring} to be
\[
{\rm Cox}(X):=\bigoplus_{[D]\in {\rm Cl}(X)}H^0(X,\mathcal{O}_X(D)).
\]
If the Cox ring is finitely generated over $\kk$, then we say that
$X$ is a {\em Mori dream space}.
}
\end{definition}

\begin{definition}{\em 
We say that a fan $\Sigma$ in $N_\qq$ is {\em degenerate}
if the support of the fan is not contained in a proper $\qq$-linear subspace of $N_\qq$.
Otherwise, we say that $\Sigma$ is {\em non-degenerate}.
The fan in $N_\qq$ is degenerate if and only if $X(\Sigma)$ splits as $X(\Sigma')\times \mathbb{T}$
for some torus $\mathbb{T}$.
If $X(\Sigma)$ is defined by a non-degenerate fan $\Sigma$, then
$X(\Sigma)$ has only constant invertible functions.}
\end{definition} 

The following proposition is well-known, see e.g.,~\cite{HK00}.

\begin{proposition}
A Mori dream space $X$ is a toric variety
if and only if its Cox rings is a polynomial ring.
\end{proposition}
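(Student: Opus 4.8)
The plan is to reconstruct $X$ from its Cox ring as a good quotient by the characteristic quasitorus and to read the toric structure off the shape of $\Cox(X)$. Write $H := \Spec(\kk[{\rm Cl}(X)])$ for the diagonalizable group associated with the ${\rm Cl}(X)$-grading and $\overline{X} := \Spec(\Cox(X))$ for the total coordinate space. I will use the reconstruction from the theory of characteristic spaces~\cite{ADHL15}, namely that a Mori dream space $X$ is recovered as a good quotient
\[
X \simeq \overline{X}^{\,\mathrm{ss}} /\!/ H,
\]
where $H$ acts on $\overline{X}$ through the grading and $\overline{X}^{\,\mathrm{ss}}$ denotes the relevant (semistable) open locus.

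For the forward implication I would invoke Cox's construction~\cite{Cox95}. If $X$ is toric with fan $\Sigma$ in $N_\qq$, then the hypothesis that $X$ has only constant invertible functions forces $\Sigma$ to be non-degenerate, and Cox's theorem identifies $\Cox(X)$ with the polynomial ring $\kk[x_\rho \mid \rho \in \Sigma(1)]$ indexed by the rays of $\Sigma$. Concretely, the torus-invariant prime divisors $D_\rho$ generate ${\rm Cl}(X)$, each space $H^0(X,\mathcal{O}_X(D))$ has a basis of monomials indexed by the lattice points of the polytope attached to $D$, and multiplication of sections matches multiplication of monomials. This gives a polynomial Cox ring.

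For the converse I would argue as follows. Suppose $\Cox(X) \simeq \kk[T_1,\dots,T_r]$, so that $\overline{X} \simeq \mathbb{A}^r$. Since each $T_i$ is ${\rm Cl}(X)$-homogeneous, the $H$-action on $\mathbb{A}^r$ is diagonal, i.e.\ $H$ acts through the big torus $\mathbb{G}_m^r \subseteq \mathbb{A}^r$ via the characters $\deg(T_i)$; these characters generate ${\rm Cl}(X)$, so $H \hookrightarrow \mathbb{G}_m^r$ is a closed subgroup. Thus $X \simeq (\mathbb{A}^r)^{\mathrm{ss}} /\!/ H$ is a good quotient of the toric variety $\mathbb{A}^r$ by a closed diagonalizable subgroup of its defining torus. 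The dense torus $\mathbb{G}_m^r \subseteq \mathbb{A}^r$ is $H$-invariant and semistable, and its image in $X$ is a dense open subset isomorphic to $\mathbb{G}_m^r/H$. Since a quotient of a torus by a closed diagonalizable subgroup is again a torus, the residual torus $\mathbb{G}_m^r/H$ acts on $X$ with a dense orbit, so $X$ is toric.

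The main obstacle is the converse, precisely the passage from the ring-theoretic hypothesis that $\Cox(X)$ is polynomial to a genuine toric structure on $X$. The crux is to have the quotient presentation $X \simeq \overline{X}^{\,\mathrm{ss}}/\!/H$ in hand and to recognize $H$ as a subgroup of the big torus $\mathbb{G}_m^r$ of affine space; after that, the general principle that good quotients of toric varieties by subgroups of their tori are toric does the rest. The bookkeeping one must watch is that ${\rm Cl}(X)$ may carry torsion, so $H$ is in general only a quasitorus rather than a torus, and that one quotients the correct semistable locus; neither issue disrupts the conclusion that $\mathbb{G}_m^r/H$ supplies $X$ with a dense torus orbit.
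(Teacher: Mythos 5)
The paper itself offers no proof of this proposition: it is stated as well known, with a pointer to Hu--Keel \cite{HK00}, so your attempt can only be measured against the standard argument behind that citation. Your forward direction is exactly that argument (Cox's presentation of the Cox ring of a toric variety as the polynomial ring on the rays of the fan), and your converse follows the same characteristic-space/GIT strategy, namely $X\simeq \overline{X}^{\rm ss}/\!\!/H$ with $H=\Spec(\kk[{\rm Cl}(X)])$. So the route is the right one; the problem is a specific step inside it.

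The gap is the sentence ``Since each $T_i$ is ${\rm Cl}(X)$-homogeneous\dots''. The hypothesis is only that $\Cox(X)$ is \emph{abstractly} isomorphic to a polynomial ring; it does not provide variables that are homogeneous for the ${\rm Cl}(X)$-grading, and finding homogeneous algebraically independent generators is exactly the statement that the $H$-action on $\mathbb{A}^r$ can be diagonalized --- an instance of the linearization problem for quasitorus actions on affine space. This is the real content of the converse: once homogeneity is granted, the rest of your argument is essentially formal. The step can be justified when the grading is pointed, which is the situation \cite{HK00} treats ($X$ projective): there $R_0=H^0(X,\mathcal{O}_X)=\kk$, homogeneous units have degree zero, and nonzero torsion classes carry no sections, so $\mathfrak{m}=\bigoplus_{w\neq 0}R_w$ is a graded maximal ideal, homogeneous lifts of a basis of $\mathfrak{m}/\mathfrak{m}^2$ generate $R$ by graded Nakayama, and they are algebraically independent because $R$ is a domain of Krull dimension $r$. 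You should either supply such an argument or state explicitly that you read ``polynomial ring'' as ``${\rm Cl}(X)$-graded polynomial ring on homogeneous variables''. The point is not pedantic in the present setting: this paper's definition of Mori dream space admits affine varieties, where $R_0=H^0(X,\mathcal{O}_X)$ is large, pointedness fails, and homogeneity of generators is genuinely delicate. Two smaller omissions in the endgame: to conclude that $X$ is toric you need the residual torus to act on all of $X$, which requires $\overline{X}^{\rm ss}$ to be $\mathbb{G}_m^r$-invariant (immediate once the $T_i$ are homogeneous, since each graded piece $R_w$ is then spanned by monomials and hence $\mathbb{G}_m^r$-stable); and the action of $\mathbb{G}_m^r/H$ on $X$ need not be effective a priori, so one should replace it by its image in ${\rm Aut}(X)$, which is still a torus acting with a dense open orbit.
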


\begin{definition}{\em 
The spectrum of the Cox ring is called the {\em total coordinate space},
and is usually denoted by $\bar{X}$.
Note that $\bar{X}$ admits an action of the {\em characteristic quasi-torus}
$\mathbb{T}_{{\rm Cl}(X)}:={\rm Spec}(\kk[{\rm Cl}(X)])$.
We can reconstruct $X$ as the quotient of a big open subset
$\hat{X}\subset \bar{X}$ by the characteristic quasi-torus action $\mathbb{T}_{{\rm Cl}(X)}$.
In the case that $X$ is affine, then we have that $\hat{X}=\bar{X}$ (see, e.g.,~\cite{ADHL15}*{1.6.3.4}.
Thus, any $n$-dimensional affine toric variety is isomorphic to the quotient
of $\mathbb{A}^{\rho+n}$ by the characteristic quasi-torus $\mathbb{T}_{{\rm Cl}(X)}$.
Here, $\rho$ is the Picard rank of the affine toric variety.
}
\end{definition} 

\begin{remark}{\em 
From now on, when we say that $x\in X$ is an affine toric variety, we mean that $X$ is an affine toric variety defined by a non-degenerate pointed polyhedral cone $\sigma \subset N_\qq$ and 
$x$ is the unique closed fixed point.
}
\end{remark}

\subsection{Automorphisms of toric varieties}
In this subsection, we will prove some statements about
automorphisms of toric varieties.
Some of the lemmas proved in this subsection are well-known, we give a short argument for the sake of completeness (see, e.g.,~\cites{AB13,ADHL15,SMS19}).
In many cases, we prove them in a slightly more general setting.

\begin{definition}{\em 
Let $X$ be an affine variety and $x\in X$ a closed point.
We denote its {\em automorphism group} by ${\rm Aut}(X,x)$,
i.e., the subgroup of ${\rm Aut}(X)$ which fixes $x$.
If $x\in X$ is clear form the context, we may omit it in the notation and just write $\widehat{X}$.
The group ${\rm Aut}(\widehat{X},x)$ is the group of automorphisms of the germ
that fix $x$.
}
\end{definition}

\begin{remark}{\em 
In general,
the groups ${\rm Aut}(X,x)$ and ${\rm Aut}(\widehat{X},x)$
are not algebraic groups. 
This phenomenon makes it hard to work with finite subgroups of both of them.
In the case of ${\rm Aut}(\widehat{X},x)$,
we know that it contains a {\em Levi subgroup}, i.e.,
a maximal reductive automorphism group (see. e.g.,~\cite{HM89}).
Any algebraic reductive subgroup of ${\rm Aut}(\widehat{X},x)$ is contained
in the Levi subgroup up to conjugation.
In particular, for an affine toric variety $x\in X$,
the image of $\mathbb{T}_X$ in ${\rm Aut}(\widehat{X},x)$ is contained
in the Levi subgroup up to conjugation.
In this article, instead of working directly with this Levi subgroup,
we will find a special linear algebraic subgroup of ${\rm Aut}(X,x)$
containing our finite group.
}
\end{remark}

\begin{definition}\label{def:form-equiv}{\em 
Let $G$ be an algebraic group acting on a variety $X$
and $x\in X$ be a fixed point.
Let $H$ be an algebraic group acting on a variety $Y$
and $y\in Y$ be a fixed point.
We say that the action of $G$ on $X$ around $x$
is {\em formally equivalent} to the action
of $H$ on $Y$ around $y$, 
if the following condition is satisfied:
There is a group isomorphism $\phi\colon G\rightarrow H$
and an isomorphism $f\colon \widehat{X}_x\rightarrow \widehat{Y}_y$ which is equivariant,
i.e., the relation $fg=\phi(g)f$ holds.
}
\end{definition} 

\begin{lemma}\label{lem:injectivity} 
Let $X$ be a smooth toric variety and
let $Z$ be a smooth variety.
Let $\pi_1$ and $\pi_2$ be the canonical projections of $X\times Z$.
Then, the homomorphism
${\rm Cl}(X)\times {\rm Cl}(Z)\rightarrow 
{\rm Cl}(X\times Z)$ given by
$(L_1,L_2)\mapsto \pi_1^*L_1\otimes \pi_2^*L_2$ is an isomorphism.
\end{lemma}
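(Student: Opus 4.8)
The plan is to reduce the statement to Picard groups, isolate the one genuinely toric input (the vanishing of the class group of the open torus, relativised over $Z$), and otherwise run a standard excision argument. Throughout I write divisor classes additively, so the map in question is $\Phi(L_1,L_2)=\pi_1^*L_1+\pi_2^*L_2$. Since $X$ and $Z$ are smooth over $\kk$, the product $X\times Z$ is smooth as well, so on all three varieties $\mathrm{Cl}=\Pic$; in particular flat pullback along $\pi_1,\pi_2$ and restriction to smooth closed subvarieties are well-defined group homomorphisms, and $\Phi$ is a homomorphism into $\mathrm{Cl}(X\times Z)$.

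For injectivity I would argue by restricting to slices. Fix closed points $x_0\in X$ and $z_0\in Z$. Restriction along the inclusion $X\times\{z_0\}\cong X$ sends $\pi_1^*L_1\mapsto L_1$ (this composite is the identity on $X$) while $\pi_2^*L_2$ pulls back through the constant map $X\to\{z_0\}\to Z$ and hence maps to $0$. Symmetrically, restriction along $\{x_0\}\times Z\cong Z$ recovers $L_2$ and kills $\pi_1^*L_1$. Thus $\Phi(L_1,L_2)=0$ forces $L_1=0$ and $L_2=0$, and this part uses no toric hypothesis at all.

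The toric content enters through surjectivity. Let $T=\mathbb{G}_m^n\subset X$ be the open torus and $D_1,\dots,D_r$ the torus-invariant prime divisors, so that $X\setminus T=\bigcup_i D_i$. On the product, the open set $U:=T\times Z$ is the complement of $\bigcup_i(D_i\times Z)$, and $\pi_1^*D_i=[D_i\times Z]$ as $\pi_1$ is flat. The excision sequence for class groups gives the exactness of
\[
\bigoplus_{i=1}^{r}\zz\,[D_i\times Z]\longrightarrow \mathrm{Cl}(X\times Z)\xrightarrow{\ \mathrm{res}\ }\mathrm{Cl}(T\times Z)\longrightarrow 0 .
\]
Granting that $\pi_2^*\colon\mathrm{Cl}(Z)\to\mathrm{Cl}(T\times Z)$ is an isomorphism, any class $D$ on $X\times Z$ satisfies $\mathrm{res}(D)=\pi_2^*L_2|_U$ for a unique $L_2\in\mathrm{Cl}(Z)$; then $D-\pi_2^*L_2$ lies in the image of the left-hand map, so $D-\pi_2^*L_2=\sum_i a_i[D_i\times Z]=\pi_1^*L_1$ with $L_1:=\sum_i a_iD_i$, whence $D=\Phi(L_1,L_2)$.

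The crux, and the step I expect to be the main obstacle, is the claim that $\pi_2^*$ identifies $\mathrm{Cl}(Z)$ with $\mathrm{Cl}(T\times Z)$; this is precisely the relativisation over $Z$ of the toric fact $\mathrm{Cl}(T)=0$. I would prove it by induction on $n$: for one factor, homotopy invariance of the Picard group of the regular scheme $Z$ gives $\Pic(\mathbb{A}^1\times Z)\cong\Pic(Z)$, and passing to $\mathbb{G}_m\times Z=(\mathbb{A}^1\times Z)\setminus(\{0\}\times Z)$ changes nothing because $\{0\}\times Z=\divv(t)$ is principal; iterating over the $n$ torus factors yields $\mathrm{Cl}(T\times Z)\cong\mathrm{Cl}(Z)$ via $\pi_2^*$. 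Here one must be careful to invoke homotopy invariance in a form valid for the (smooth, possibly non-proper) factor $Z$ and to check that the removed divisor is principal at each stage; once this is pinned down, surjectivity follows and the lemma is proved.
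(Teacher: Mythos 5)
Your proof is correct, and it splits into two parts that compare differently with the paper. The injectivity step is exactly the paper's argument: the paper disposes of injectivity ``by intersecting with closed fibers of $\pi_1$ and $\pi_2$,'' which is precisely your restriction to the slices $X\times\{z_0\}$ and $\{x_0\}\times Z$. For surjectivity, however, you take a genuinely different route. The paper argues prime divisor by prime divisor on $X\times Z$: if $D$ is vertical over $Z$ it is a pullback $\pi_2^*$ of a divisor on $Z$, and if $D$ dominates $Z$ then $D\sim_Z\sum_i n_i\bar H_i$, where the $\bar H_i$ are pullbacks of the torus-invariant divisors of $X$; the toric input there is that the class group of the generic fiber $X_{K(Z)}$ (a toric variety over the function field of $Z$) is generated by invariant divisors, so that $D-\sum_i n_i\bar H_i$ is linearly equivalent to a vertical, hence pulled-back, divisor. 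You instead localize: excision along $U=T\times Z$, whose complement is $\bigcup_i (D_i\times Z)$, reduces everything to the isomorphism $\operatorname{Cl}(Z)\cong\operatorname{Cl}(T\times Z)$, which you get from homotopy invariance of the class group plus removal of the principal divisors $\{0\}\times Z$ one torus factor at a time. The two arguments rest on the same underlying phenomenon (divisors on $\mathbb{A}^1$, or on the torus, over the field $K(Z)$ are principal), but your packaging outsources it to standard excision and homotopy-invariance results for class groups, which has the merit of fully justifying the step the paper leaves as a bare assertion, namely $D\sim_Z\sum_i n_i\bar H_i$; the paper's version is shorter and avoids the induction over torus factors. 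The caveat you flag about non-proper $Z$ is harmless: $\operatorname{Cl}(Z\times\mathbb{A}^1)\cong\operatorname{Cl}(Z)$ holds for any noetherian integral separated scheme that is regular in codimension one, so smoothness of $Z$ is more than enough, and at each stage of your induction the removed divisor $\{0\}\times(\mathbb{G}_m^{k}\times Z)$ is indeed the divisor of the coordinate function, so the iteration goes through.
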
 

\begin{proof}
Let $H_1,\dots,H_k$ be the torus invariant divisors of $X$.
For each $i$, we let $\bar{H}_i$
be the pull-back of $H_i$ to $X\times Z$.
The injectivity of the above homomorphism follows by intersecting with closed fibers of $\pi_1$ and $\pi_2$.
Let $D$ be a prime divisor on $X\times Z$.
If $D$ is vertical over $Z$, then it is the pull-back of a divisor on $Z$.
On the other hand, if $D$ dominates $Z$, 
then we can write $D\sim_Z \sum_{i=1}^kn_i\bar{H}_i$.
Thus, the above homomorphism is surjective.
\end{proof}

\begin{lemma}\label{lem:action-aut-0}
Let $x\in X$ be an affine toric variety.
Let $G\leqslant {\rm Aut}(X,x)$ be a connected linear subgroup.
Let $\tau \in G$.
Then, $\tau$ acts trivially on ${\rm Cl}(X,x)$.
\end{lemma}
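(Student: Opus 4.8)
The plan is to prove the stronger statement that all of $G$ acts trivially on ${\rm Cl}(X,x)={\rm Cl}(X)$; the assertion for a single element $\tau\in G$ then follows immediately. The action of ${\rm Aut}(X,x)$ on Weil divisor classes gives a homomorphism $\rho\colon {\rm Aut}(X,x)\to {\rm Aut}({\rm Cl}(X))$. Since $x\in X$ is affine toric defined by a non-degenerate cone, ${\rm Cl}(X)$ is a finitely generated abelian group, so ${\rm Aut}({\rm Cl}(X))$ is a discrete group whose only connected subgroup is trivial. Hence, once we know that $\rho|_G\colon G\to {\rm Aut}({\rm Cl}(X))$ is a morphism of algebraic groups, with the target carrying its discrete structure, the connectedness of $G$ forces $\rho(G)=\{{\rm id}\}$, and we are done. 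The whole argument therefore reduces to the single principle that a connected algebraic group admits no nontrivial morphism to a discrete group.

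The key point is to promote $\rho|_G$ to an algebraic homomorphism, i.e.\ to see that $g\mapsto \rho(g)$ is locally constant on $G$. Here I would use the Cox construction recalled above: write $X=\bar{X}/\!\!/ H$ with $\bar{X}=\mathbb{A}^{\rho+n}$ the total coordinate space and $H=\mathbb{T}_{{\rm Cl}(X)}={\rm Spec}(\kk[{\rm Cl}(X)])$ the characteristic quasi-torus, so that ${\rm Cl}(X)$ is identified with the character group $\mathbb{X}(H)$. By the theory of Cox rings~\cite{ADHL15}, the $G$-action on $X$ lifts to an action of a connected algebraic group $\tilde{G}$ on $\bar{X}$ that normalizes $H$ and descends to $G$, and under the identification ${\rm Cl}(X)=\mathbb{X}(H)$ the homomorphism $\rho$ is induced by the conjugation action of $\tilde{G}$ on $H$. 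Conjugation of an algebraic group on a diagonalizable subgroup is algebraic, so the induced map $\tilde{G}\to {\rm Aut}(\mathbb{X}(H))={\rm Aut}({\rm Cl}(X))$ is a morphism to a discrete group; as $\tilde{G}$ is connected this map is trivial. Since the abelian group $H$ acts trivially on its own character group by conjugation, $\rho|_G$ factors through this map, so $\rho|_G$ is trivial as well.

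I expect the only real obstacle to be the bookkeeping with the Cox ring: namely, checking that the $G$-action lifts to $\bar{X}$ through a connected group normalizing $H$ and that $\rho$ is realized by conjugation on $H$. If one prefers to avoid the lift, the same local constancy can be obtained directly: for a prime divisor $D$ the action produces a flat family of Weil divisors over the connected base $G$ whose fibres are the translates $g\cdot D$, and since an affine toric variety carries no nontrivial algebraically trivial classes (algebraic equivalence coincides with linear equivalence on ${\rm Cl}(X)$), the class $[g\cdot D]\in {\rm Cl}(X)$ is independent of $g$; specializing $g$ to the identity gives $[g\cdot D]=[D]$ for all $g$. Either route delivers the triviality of $\rho|_G$, which is exactly the content of the lemma.
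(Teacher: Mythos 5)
Your primary route begs the question. The step you set aside as ``bookkeeping'' --- producing a lift of $G$ to a \emph{connected algebraic} group $\tilde{G}$ acting on $\bar{X}=\mathbb{A}^{\rho+n}$, normalizing $H=\mathbb{T}_{{\rm Cl}(X)}$, and inducing the class-group action by conjugation --- is precisely where the content of the lemma lives, not a routine verification. In this paper that lifting statement is Proposition~\ref{prop:lifting-aut}, and its proof invokes the present lemma (together with Lemma~\ref{lem:action-characteristic}) exactly in order to know that the connected lift commutes with the characteristic quasi-torus; that commutation is what makes the subgroup generated by the lift and $\mathbb{T}_{{\rm Cl}(X)}$ an algebraic group. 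The same circularity appears if you instead cite the connected-group lifting theorem of \cite{ADHL15}: a lift preserving the ${\rm Cl}(X)$-grading of the Cox ring can exist only for automorphisms acting trivially on ${\rm Cl}(X)$, so the proof there must (and does) establish triviality of the class-group action first. The exact sequence of \cite{AG10}*{Theorem 5.1} does supply a normalizing lift of each element, but only as an extension of abstract groups inside the non-algebraic group ${\rm Aut}(\mathbb{A}^{\rho+n})$; to apply rigidity of diagonalizable groups you need the conjugation map $\tilde{G}\times H\to H$ to be a morphism from a connected \emph{algebraic} group, and the algebraicity of the lifted group is the hard part. So Route 1, as written, assumes something at least as strong as the conclusion.

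Your fallback route, by contrast, is in essence the paper's own proof, and it is the one you should flesh out. The paper passes to the smooth locus $X^{\rm sm}$ (automorphisms preserve it and ${\rm Cl}(X^{\rm sm})={\rm Cl}(X)$, so Weil-divisor classes become line bundles and the family of translates is well defined), forms the universal automorphism of $X^{\rm sm}\times G$, and pulls back the line bundles $\mathcal{O}_{X^{\rm sm}}(H_i)$ attached to the torus-invariant divisors generating ${\rm Cl}(X)$. The assertion you make in passing --- that algebraic equivalence coincides with linear equivalence, i.e., that a family of classes over a connected base is constant --- is exactly what Lemma~\ref{lem:injectivity} supplies, namely ${\rm Cl}(X^{\rm sm}\times Z)\simeq {\rm Cl}(X^{\rm sm})\times {\rm Cl}(Z)$; note that its proof genuinely uses the toric structure (every prime divisor dominating $Z$ is, relatively over $Z$, linearly equivalent to a combination of the pull-backs of the $H_i$), so this is a fact to be proved, not a generality about normal affine varieties. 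Once that decomposition is available, restricting the pulled-back class to the fibers over ${\rm id}$ and over $\tau$ gives $[\tau^*H_i]=[H_i]$ for all $i$, which is the paper's conclusion. In short: discard Route 1, promote Route 2 from a remark to the proof, and supply Lemma~\ref{lem:injectivity} as its key ingredient.
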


\begin{proof}
Let $H_1,\dots, H_k$ be the torus invariant divisors of $X$.
We set $Z:=G^0(X,x)$.
Note that $Z$ is a smooth algebraic variety.
Let $\phi\colon X\times Z \rightarrow X\times Z$ be the universal automorphism, i.e.,
the morphism given by
$(x,\tau)\mapsto (\tau(x),\tau)$.
Note that every automorphism of $X$ fixes the singular locus.
Then, the universal automorphism induces
an automorphism $\phi\colon X^{\rm sm}\times Z\rightarrow X^{\rm sm}\times Z$.
By Lemma~\ref{lem:injectivity}, we have that
\[
\phi^*\pi_2^*\mathcal{O}_{X^{\rm sm}}(H_i)
\simeq \pi_1^* \mathcal{L}_1 \otimes \pi_2^* \mathcal{L}_2,
\]
for certain line bundles
$\mathcal{L}_1$ on $X^{\rm sm}$
and 
$\mathcal{L}_2$ on $Z$.
Restricting the above isomorphism to $X^{\rm sm}\times \{{\rm id}\}$
and $X^{\rm sm}\times \{ \tau\}$, 
we conclude that $\mathcal{O}_{X^{\rm sm}}(H_i)\simeq \mathcal{L}_1$
and $\tau^*\mathcal{O}_{X^{\rm sm}}(H_i)\simeq \mathcal{L}_1 \simeq \mathcal{O}_{X^{\rm sm}}(H_i)$.
Thus, we have that
$\tau^*\mathcal{O}_{X^{\rm sm}}(H_i)\simeq \mathcal{O}_{X^{\rm sm}}(H_i)$.
Since the $\mathcal{O}_{X^{\rm sm}}(H_i)$ generate ${\rm Cl}(X,x)$,
we conclude that $\tau$ acts as the identity on ${\rm Cl}(X,x)$.
\end{proof}

\begin{definition}
{\em 
Let $x\in X$ be an affine toric variety
and $x$ its unique fixed point.
Write $X={\rm Spec}(R)$.
Let $f\in {\rm Aut}(X,x)$ be an automorphism.
We say that $\tilde{f}\in {\rm Aut}(\mathbb{A}^{\rho+n},0)$
is a {\em lifting} of $f$ to the Cox ring,
if $\tilde{f}$ normalizes the characteristic quasi-torus
and the automorphism that $\tilde{f}$ induces on 
\[
R=\kk[x_1,\dots,x_{\rho+n}]^{{\rm Cl}(X)}
\]
equals $f$.
}
\end{definition}

\begin{lemma}\label{lem:action-characteristic}
Let $x\in X$ be an affine toric variety and
let $\mathbb{A}^{\rho+n}$ be its Cox ring.
Let $f\in {\rm Aut}(X,x)$ and $\tilde{f}\in {\rm Aut}(\mathbb{A}^{\rho+n},0)$ a lifting.
Then, the action of $\tilde{f}$ on $\mathbb{T}_{{\rm Cl}(X)}$ by conjugation
is induced by the action of $f^*$ on ${\rm Cl}(X)$.
In particular, if $f^*$ acts trivially on ${\rm Cl}(X)$,
then $\tilde{f}$ commutes with the characteristic quasi-torus.
\end{lemma}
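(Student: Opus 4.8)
The plan is to translate both sides of the claimed equality into the language of the ${\rm Cl}(X)$-grading of the Cox ring $S:=\kk[x_1,\dots,x_{\rho+n}]$ and to compare them. Recall that the characteristic quasi-torus $\mathbb{T}_{{\rm Cl}(X)}={\rm Spec}(\kk[{\rm Cl}(X)])$ acts on $\bar{X}={\rm Spec}(S)=\mathbb{A}^{\rho+n}$, and that this action is the same datum as the ${\rm Cl}(X)$-grading $S=\bigoplus_{w\in {\rm Cl}(X)}S_w$: the character lattice of $\mathbb{T}_{{\rm Cl}(X)}$ is canonically ${\rm Cl}(X)$, and $S_w$ is the weight-$w$ eigenspace. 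Under this duality an automorphism of the quasi-torus $\mathbb{T}_{{\rm Cl}(X)}$ is the same datum as an automorphism of the abelian group ${\rm Cl}(X)$. Writing $\psi$ for the automorphism of $\mathbb{T}_{{\rm Cl}(X)}$ given by conjugation by $\tilde f$, the lemma amounts to the assertion that the automorphism of ${\rm Cl}(X)$ dual to $\psi$ is exactly $f^*$.

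First I would record the purely formal consequence of the normalizing hypothesis. Since $\tilde f$ normalizes $\mathbb{T}_{{\rm Cl}(X)}$, writing $\varphi:=\tilde f^*$ for the induced algebra automorphism of $S$ and using $\tilde f\, t=\psi(t)\,\tilde f$, one obtains an intertwining relation $\varphi\circ t^*=\psi^{-1}(t)^*\circ\varphi$ on $S$. Feeding a weight vector $s\in S_w$ into this identity shows that $\varphi(s)$ is again a weight vector, of weight $\psi^\vee(w)$, where $\psi^\vee$ denotes the automorphism of ${\rm Cl}(X)$ dual to $\psi$. In other words the lift $\tilde f$ permutes the graded pieces of the Cox ring by $\varphi(S_w)=S_{\psi^\vee(w)}$, and this is a bookkeeping computation that uses only that $\tilde f$ normalizes the quasi-torus.

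The heart of the argument is to identify $\psi^\vee$ with $f^*$, and this is where I expect the main difficulty. The idea is to use the intrinsic description $S_w\cong H^0(X,\mathcal{O}_X(D))$ with $[D]=w$ coming from the Cox construction, together with the fact that $\tilde f$ descends to $f$ on $X=\bar{X}/\!/\mathbb{T}_{{\rm Cl}(X)}$. Because $\varphi$ restricts to $f^*$ on the invariant ring $S_0=R$ and is $f^*$-semilinear over $R$ on each $S_w$, the map $\varphi\colon S_w\to S_{\psi^\vee(w)}$ must correspond, under the identification of graded pieces with spaces of sections, to the geometric pullback $f^*\colon H^0(X,\mathcal{O}_X(D))\to H^0(X,f^*\mathcal{O}_X(D))=H^0(X,\mathcal{O}_X(f^*D))$. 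Comparing the two descriptions of the target and reading off divisor classes yields $\psi^\vee(w)=[f^*D]=f^*(w)$ for every $w\in{\rm Cl}(X)$, hence $\psi^\vee=f^*$. The delicate point to get right is exactly this matching: one must check that the abstract algebra automorphism $\varphi$ of $S$ is compatible with the isomorphisms $f^*\mathcal{O}_X(D)\simeq\mathcal{O}_X(f^*D)$ underlying the Cox construction, so that the weight shift computed algebraically in the previous paragraph coincides with the class $f^*(w)$ computed geometrically. Here it is worth noting that the ambiguity in these isomorphisms is precisely what accounts for a lift $\tilde f$ being well defined only up to the $\mathbb{T}_{{\rm Cl}(X)}$-action, consistently with the statement depending on $f$ and not on the chosen lift.

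Finally, the ``in particular'' is immediate: if $f^*$ acts trivially on ${\rm Cl}(X)$ then $\psi^\vee={\rm id}$, so its dual $\psi={\rm id}$, i.e.\ conjugation by $\tilde f$ is trivial and $\tilde f$ commutes with the characteristic quasi-torus. For this last consequence the precise convention relating $\psi^\vee$ and $f^*$---whether one lands on $f^*$ or $(f^*)^{-1}$---is irrelevant, since one is trivial exactly when the other is.
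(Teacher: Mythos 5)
Your proposal is correct and is essentially the paper's own argument: the paper's proof likewise takes a homogeneous element $h$ of degree $w=[D]\in{\rm Cl}(X)$ in the Cox ring and computes that the conjugated element $\tilde{f}t\tilde{f}^{-1}$ acts on $h$ by the character $\chi^{f^*(w)}(t)$, which is exactly your weight-shift computation. The only difference is emphasis: the paper asserts the identification $\deg(h\circ\tilde{f})=f^*(w)$ outright (the step you rightly call the heart of the argument), whereas you make explicit why the algebraic degree shift coming from the normalizing hypothesis agrees with the geometric pullback $f^*$ on ${\rm Cl}(X)$ via the identification $S_w\cong H^0(X,\mathcal{O}_X(D))$.
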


\begin{proof}
Let $h\in \kk[x_1,\dots,x_{\rho+n}]\simeq {\rm Cox}(X)$ be a homogeneous element
of degree $w=[D]\in {\rm Cl}(X)$.
An element $\chi^w$ of the characteristic quasi-torus $\mathbb{T}_{{\rm Cl}(X)}$ acts on $h$ by
$t\cdot h = ht =\chi^w(t)h.$
On the other hand, we have that
\[
(\tilde{f}t\tilde{f}^{-1})\cdot h =
t\cdot (h\tilde{f}) \tilde{f}^{-1} =
\chi^{{\rm deg}(hg)}(t)h =\chi^{g^*(w)}(t)h.
\]
This equality proves the lemma.
\end{proof}

Now, we turn to prove some propositions about lifting
automorphisms of an affine toric variety to its Cox ring.

\begin{proposition}\label{prop:lifting-aut}
Let $x\in X$ be an affine toric variety.
Let $G\leqslant {\rm Aut}(X,x)$ 
be a reductive subgroup. 
Then, there is an exact sequence
\[
1\rightarrow
\mathbb{T}_{{\rm Cl}(X)}
\rightarrow 
\widetilde{G}
\rightarrow 
G
\rightarrow 
1.
\]
where $\widetilde{G}$ is a reductive subgroup of ${\rm Aut}(\mathbb{A}^{\rho+n},0)$
\end{proposition}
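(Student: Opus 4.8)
The plan is to realize $\widetilde{G}$ as the group of all liftings of elements of $G$ and then to analyze the resulting extension. Write $\bar X=\mathbb{A}^{\rho+n}$ for the total coordinate space, $H:=\mathbb{T}_{{\rm Cl}(X)}$ for the characteristic quasi-torus, and $p\colon \bar X\to X=\bar X/\!\!/H$ for the quotient, so that ${\rm Cox}(X)=\kk[x_1,\dots,x_{\rho+n}]$ with $x_i$ homogeneous of degree $[D_i]\in{\rm Cl}(X)$, where $D_1,\dots,D_{\rho+n}$ are the torus-invariant prime divisors. Every $\tilde f\in{\rm Aut}(\mathbb{A}^{\rho+n},0)$ normalizing $H$ permutes the $H$-orbits, hence descends to an automorphism $\Phi(\tilde f)\in{\rm Aut}(X,x)$, and by construction a lifting of $f$ in the sense of the preceding definition is exactly an $H$-normalizing $\tilde f$ with $\Phi(\tilde f)=f$. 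I would first record two soft facts: the kernel of $\Phi$ equals $H$ (an $H$-normalizing automorphism of $\bar X$ that is the identity on the invariant ring ${\rm Cox}(X)_0=\mathcal O(X)$ is a constant shift by $H$ on the free locus, since $\bar X$ has only constant units; see~\cite{ADHL15}), and liftings of a fixed $f$, when they exist, form a single $H$-coset.

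Next I would establish existence of liftings, which is where the polynomial structure of ${\rm Cox}(X)$ is essential. Given $g\in G$, the automorphism carries $D_i$ to an effective divisor of class $g^*[D_i]$ and hence induces an isomorphism $H^0(X,\mathcal O(D_i))\xrightarrow{\sim}H^0(X,\mathcal O(g^*D_i))$, that is, ${\rm Cox}(X)_{[D_i]}\to{\rm Cox}(X)_{g^*[D_i]}$; sending $x_i$ to the image of its canonical section defines a homogeneous element $\tilde g(x_i)$. Because ${\rm Cox}(X)$ is a polynomial ring there are no relations to verify, so these assignments extend uniquely to a graded $\kk$-algebra endomorphism $\tilde g$, which is invertible since a lift of $g^{-1}$ provides an inverse up to $H$. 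The graded computation of Lemma~\ref{lem:action-characteristic} shows $\tilde g$ normalizes $H$ and restricts to $g$ on $\mathcal O(X)$, so it is a genuine lifting. Since $\Phi$ is a homomorphism on $H$-normalizing automorphisms, $\widetilde{G}:=\Phi^{-1}(G)$ is a subgroup and we obtain the short exact sequence $1\to H\to\widetilde{G}\to G\to 1$ of abstract groups.

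It remains to endow $\widetilde G$ with the structure of a linear algebraic group and to verify reductivity, and this is the main obstacle. By Lemma~\ref{lem:action-aut-0} the connected subgroup $G^0$ acts trivially on ${\rm Cl}(X)$, so $g\mapsto g^*$ factors through the finite quotient $G/G^0$ and the classes $g^*[D_i]$ range over a finite subset $S\subset{\rm Cl}(X)$; let $W\leqslant{\rm Aut}({\rm Cl}(X))$ be the resulting finite group of degree-shifts. The liftings with shift in $W$ then form a linear algebraic group $\mathcal N_W$ acting faithfully and algebraically on the finite-dimensional space $V:=\bigoplus_{w\in S}{\rm Cox}(X)_w$, with $H\leqslant\mathcal N_W$ and $\mathcal N_W/H$ acting faithfully and algebraically on $X$. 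Since $G$ also acts algebraically on $X$ and sits inside $\mathcal N_W/H$ as an abstract group, comparing the two algebraic actions shows the inclusion $G\hookrightarrow\mathcal N_W/H$ is a morphism of algebraic groups with closed image; hence $\widetilde G=\Phi^{-1}(G)\cap\mathcal N_W$ is a closed, and therefore algebraic, subgroup of $\mathcal N_W\leqslant{\rm Aut}(\mathbb{A}^{\rho+n},0)$. Reductivity is then formal: the image of the unipotent radical $R_u(\widetilde G)$ in $G$ is a connected normal unipotent subgroup, hence trivial as $G$ is reductive, so $R_u(\widetilde G)\leqslant H$; but the quasi-torus $H$ is diagonalizable and contains no nontrivial unipotent subgroup, forcing $R_u(\widetilde G)=1$. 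Thus $\widetilde G$ is reductive and the sequence has the asserted form.
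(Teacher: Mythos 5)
Your overall route is the same as the paper's: lift $G$ through the Cox ring, obtain an extension of $G$ by the characteristic quasi-torus, and kill the unipotent radical by observing that it maps to a trivial subgroup of the reductive group $G$ and that $\mathbb{T}_{{\rm Cl}(X)}$, being diagonalizable, contains no nontrivial unipotents. That last reductivity argument is correct and is essentially identical to the one in the paper. The problem is the step in the middle, where you endow $\widetilde{G}$ with the structure of a linear algebraic group; this is where the actual content of the proposition lies, and your argument for it fails.

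Concretely: you claim that the liftings with degree-shift in the finite group $W$ form a linear algebraic group $\mathcal{N}_W$ because they act faithfully on the ``finite-dimensional'' space $V=\bigoplus_{w\in S}{\rm Cox}(X)_w$. But $X$ is \emph{affine}, so each graded piece ${\rm Cox}(X)_w=H^0(X,\mathcal{O}_X(D))$ is an infinite-dimensional $\kk$-vector space; already ${\rm Cox}(X)_0=\mathcal{O}(X)$ is infinite-dimensional, and every other piece is a module over it. So $V$ is infinite-dimensional and the embedding into ${\rm GL}(V)$ confers no algebraic structure. Worse, the intermediate claim is false as stated: take $X=\mathbb{A}^n$, so that $\mathbb{T}_{{\rm Cl}(X)}$ is trivial and $W$ is trivial; then every origin-preserving automorphism is its own lifting with trivial shift, and $\mathcal{N}_W={\rm Aut}(\mathbb{A}^n,0)$, which is an infinite-dimensional ind-group, not a linear algebraic group. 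Restricting the degree shift to a finite group does nothing to bound the liftings; what must be shown is that the preimage of the \emph{algebraic} group $G$ is algebraic, and your argument for that (``comparing the two algebraic actions\dots closed image'') presupposes the algebraicity of $\mathcal{N}_W/\mathbb{T}_{{\rm Cl}(X)}$ that has just broken down. The paper avoids this trap by never forming $\mathcal{N}_W$: it first lifts the connected component $G^0$ using the construction of \cite{ADHL15}*{4.2.3.1.}, which produces an honest algebraic group $G_0'$ acting on $\mathbb{A}^{\rho+n}$ with finite kernel over $G^0$; it then uses Lemmas~\ref{lem:action-aut-0} and~\ref{lem:action-characteristic} to see that $G_0'$ commutes with $\mathbb{T}_{{\rm Cl}(X)}$ and fixes the origin, takes $\widetilde{G}_0$ to be the image of the algebraic group $\mathbb{T}_{{\rm Cl}(X)}\times G_0'$, and finally realizes $\widetilde{G}$ as a finite extension of $\widetilde{G}_0$ inside the normalizer sequence of \cite{AG10}*{Theorem 5.1}. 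Some version of this boundedness input (an algebraic lifting of $G^0$, or a uniform degree bound on the liftings of elements of $G$) is indispensable and is missing from your proposal.

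Two smaller points. First, your constructed lifting $\tilde g$ need not restrict to $g$ on $\mathcal{O}(X)$ on the nose: the identification of $H^0(X,\mathcal{O}_X(g^*D_i))$ with a graded piece involves scalar ambiguities, so a priori $\tilde g$ agrees with $g$ on degree zero only up to a rescaling character, which must be absorbed by composing with an element of the diagonal torus. Second, the invertibility of $\tilde g$ via a lift of $g^{-1}$ is slightly circular as written (the kernel statement you invoke was formulated for automorphisms); it can be repaired by noting that the composite sends each $x_i$ to a scalar multiple of $x_i$. Both issues are fixable; the algebraicity gap is the one that requires a genuinely different argument.
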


\begin{proof}
First, we proceed to lift the connected component $G_0$ of $G$.
Note that $G_0$ is a connected linear algebraic reductive group.

Let $G_0'\rightarrow G_0$ be the lifting of 
$G_0$ to ${\rm Aut}(\mathbb{A}^{\rho+n})$ constructed in~\cite{ADHL15}*{4.2.3.1.}.
We have an exact sequence
\[
1\rightarrow F \rightarrow G_0'
\rightarrow G_0\rightarrow 1,
\]
where $F$ is a finite group.
Furthermore, $F$ is the intersection of $G'_0$
with the Picard quasi-torus (see, e.g.,~\cite{AG10}*{Theorem 5.1}).
Since $G_0$ is connected, 
by Lemma~\ref{lem:action-aut-0}, 
we conclude that it acts trivially on the characteristic quasi-torus.
By Lemma~\ref{lem:action-characteristic}, we conclude
that $G'_0$ commutes with the characteristic quasi-torus
$\mathbb{T}_{{\rm Cl}(X)}$.

We claim that $G'_0$ is actually a subgroup of
${\rm Aut}(\mathbb{A}^{\rho+n},0)$.
Let $g'\in G'_0$ be the lifting of $g\in G_0$. 
Since $g'$ commutes with the characteristic torus, 
then $g'(0)$ is a characteristic quasi-torus invariant point.
If $0\in \mathbb{A}^{\rho+n}$ is the only $\mathbb{T}_{{\rm Cl}(X)}$ invariant point, then we are done.
Otherwise, we can write $\mathbb{A}^{\rho+n}\simeq \mathbb{A}^{\rho+n-k}\times \mathbb{A}^k$, where $k\geq 1$
and $\mathbb{T}_{{\rm Cl}(X)}$ acts trivially on $\mathbb{A}^k$.
In this case $X\simeq Y\times \mathbb{A}^k$.
Since $g$ fix the origin of $\mathbb{A}^k$,
then the same holds for the $g'$.
We conclude that $G'_0 \leqslant {\rm Aut}(\mathbb{A}^{\rho+n},0)$.

Let $\widetilde{G}_0$ the subgroup of 
${\rm Aut}(\mathbb{A}^{\rho+n},0)$ generated
by $\mathbb{T}_{{\rm Cl}(X)}$ and $G'_0$.
We claim that $\widetilde{G}_0$ is a linear algebraic group.
Indeed, since $G'_0$ and $\mathbb{T}_{{\rm Cl}(X)}$ 
commute in ${\rm Aut}(\mathbb{A}^{\rho+n},0)$,
we conclude that there is a homomorphism
\[
\mathbb{T}_{{\rm Cl}(X)}\times G'_0\rightarrow
{\rm Aut}(\mathbb{A}^{\rho+n},0)
\]
given by $(t,g)\mapsto tg$.
The image of this homomorphism equals $\widetilde{G}_0$.
Furthermore, the kernel of such epimorphism is
$F\leqslant \mathbb{T}_{{\rm Cl}(X)}\times G'_0$.
Thus $\widetilde{G}_0$ is the quotient of a connected linear algebraic group
by a finite group.
We conclude that $\widetilde{G}_0$ is a connected linear algebraic group.

We claim that $\widetilde{G}_0$ is reductive.
Let $U\leqslant \widetilde{G}_0$
be a smooth connected unipotent normal subgroup.
Since the torus contains no unipotent element, we conclude that $U\cap \mathbb{T}_{{\rm Cl}(X)}=\{e\}$.
Thus, the image of $U$ in $G_0$ is a smooth connected unipotent normal subgroup.
Since $G_0$ is reductive, we conclude that the image of $U$ in $G_0$ is trivial, 
hence $U$ is trivial.

Now, we proceed to lift $G$ to $\widetilde{G}\leqslant {\rm Aut}(\mathbb{A}^{\rho+n},0)$.
By~\cite{AG10}*{Theorem 5.1}, we have an exact sequence
\[
1\rightarrow \mathbb{T}_{{\rm Cl}(X)}
\rightarrow \widetilde{{\rm Aut}}(\mathbb{A}^{\rho+n})
\rightarrow {\rm Aut}(X)\rightarrow 1, 
\]
where $\widetilde{{\rm Aut}}(\mathbb{A}^{\rho+n})$
is the subgroup of ${\rm Aut}(\mathbb{A}^{\rho+n})$
which normalizes the characteristic quasi-torus.
We consider the projection homomorphism
 $\pi\colon  {\rm Aut}(\mathbb{A}^{\rho+n})\rightarrow {\rm Aut}(X)$.
Note that $\pi^{-1}(G_0)=\widetilde{G}_0$.
We let $\widetilde{G}$ to be the pre-image of
$G\leqslant {\rm Aut}(X,x)$ on
$\widetilde{{\rm Aut}}(\mathbb{A}^{\rho+n})$.
Note that $\widetilde{G}$ normalizes
the characteristic quasi-torus.
Hence, we conclude that
$\widetilde{G}\leqslant {\rm Aut}(\mathbb{A}^{\rho+n},0)$.
On the other hand, we have an exact sequence
\[
1\rightarrow
\widetilde{G}_0\rightarrow \widetilde{G}
\rightarrow F'\rightarrow 1,
\]
where $F'$ is a finite group.
We conclude that the induced representation 
on $\widetilde{G}$ is faithful.
Hence $\widetilde{G}$ is a linear algebraic group.
Since $\widetilde{G}_0$ is reductive,
we conclude that $\widetilde{G}$ is reductive as well.
\end{proof}

It is known that the automorphism group of an affine toric variety is not an algebraic group.
Indeed, it may not be of finite type over $\kk$.
Now, we turn to prove that inside the automorphism group of an affine toric variety $x\in X$, 
there is an special linear algebraic group
$G(X,x)$.
The action of any reductive group $G$
can be realized as the action of a subgroup of
$G(X,x)$ up to formal equivalence of group actions.

\begin{definition}\label{def:g(X,x)}
{\em
Let $x\in X$ be an affine toric variety.
Let $\mathbb{A}^{\rho+n}$ its total coordinate space 
and $\mathbb{T}_{{\rm Cl}(X)}$ the characteristic quasi-torus acting on the
total coordinate space.
We have a natural embedding 
$\mathbb{T}_{{\rm Cl}(X)}\leqslant {\rm GL}_{\rho+n}(\kk)$.
Let $N=N_{\mathbb{T}_{{\rm Cl}(X)}}({\rm GL}_{\rho+n}(\kk))$
be the normalizer of the characteristic quasi-torus.
We define 
$G(X,x)=N/\mathbb{T}_{{\rm Cl}(X)}$.
Note that $G(X,x)$ is a subgroup of ${\rm Aut}(X,x)$.
On the other hand, 
$G(X,x)$ is a linear algebraic group by construction.
We say that $G(X,x)$ is the group of {\em linear automorphisms} of the affine toric variety.
Indeed, $G(X,x)$ is exactly the group of automorphisms which lift to linear automorphisms on the Cox ring.
We will denote by $G^0(X,x)$ its connected component.

Denote by $\mathbb{T}_{{\rm Cox}(X)}$ a maximal torus of $\mathbb{A}_{\rho+n}$
which we may assume is embedded in ${\rm GL}_{\rho+n}(\kk)$.
Note that 
$\mathbb{T}_{{\rm Cox}(X)}\leqslant N$.
Furthermore, the image of 
$\mathbb{T}_{{\rm Cox}(X)}$ on
${\rm Aut}(X,x)$ coincides with
$\mathbb{T}_X$.
Hence, we have that 
$\mathbb{T}_X\leqslant G(X,x)$.
In particular, we have that
$\mathbb{T}_X\leqslant G^0(X,x)$.
Note that, if $x\in X$ is the affine space,
then $G(X,x)$ is simply ${\rm GL}_n(\kk)$.
}
\end{definition} 

\begin{lemma}\label{lem:diag}
Let $x\in X$ be an affine toric variety.
Let $G\leqslant {\rm Aut}(X,x)$ be a reductive subgroup.
Up to formal equivalence of group actions, 
we may assume that 
\[
G\leqslant G(X,x)\leqslant {\rm Aut}(X,x).
\]
\end{lemma}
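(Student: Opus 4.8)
The plan is to lift $G$ to the total coordinate space, linearize the lifted action at the origin using reductivity, and then descend the linearizing isomorphism back to the germ $\widehat{X}_x$. The role of reductivity is exactly to make the linearization possible, and the role of the Cox ring is to turn the abstract group $G$ into a group of \emph{linear} automorphisms once everything has been straightened.

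First I would apply Proposition~\ref{prop:lifting-aut} to the reductive subgroup $G\leqslant {\rm Aut}(X,x)$. This produces a reductive subgroup $\widetilde{G}\leqslant {\rm Aut}(\mathbb{A}^{\rho+n},0)$ fitting in an exact sequence
\[
1\rightarrow \mathbb{T}_{{\rm Cl}(X)}\rightarrow \widetilde{G}\rightarrow G\rightarrow 1,
\]
in which the characteristic quasi-torus $\mathbb{T}_{{\rm Cl}(X)}$ is a \emph{normal} subgroup acting by its standard diagonal linear action on $\mathbb{A}^{\rho+n}$, and $0$ is the unique point lying over $x$. The heart of the argument is then the linearization of the $\widetilde{G}$-action at the smooth fixed point $0$. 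Since $\widetilde{G}$ is reductive, $\mathbb{K}$ has characteristic zero, and $0$ is a smooth fixed point, Luna's étale slice theorem (the slice at a fixed point being the full tangent space) yields a $\widetilde{G}$-equivariant formal isomorphism
\[
f\colon \widehat{\mathbb{A}^{\rho+n}}_0\rightarrow \widehat{\mathbb{A}^{\rho+n}}_0
\]
intertwining the given action with the linear isotropy action $\rho\colon \widetilde{G}\rightarrow {\rm GL}(T_0\mathbb{A}^{\rho+n})={\rm GL}_{\rho+n}(\kk)$. Because $\mathbb{T}_{{\rm Cl}(X)}$ already acts linearly, its isotropy representation coincides with its original action, so $\rho$ restricts to the standard embedding $\mathbb{T}_{{\rm Cl}(X)}\hookrightarrow {\rm GL}_{\rho+n}(\kk)$ and $f$ is equivariant for one and the same standard $\mathbb{T}_{{\rm Cl}(X)}$-action on source and target.

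Next I would descend. Since $f$ commutes with the standard $\mathbb{T}_{{\rm Cl}(X)}$-action, it preserves the subring of $\mathbb{T}_{{\rm Cl}(X)}$-invariant formal functions, which is precisely the completion of $R=\kk[x_1,\dots,x_{\rho+n}]^{{\rm Cl}(X)}$ at $x$. Hence $f$ descends to a formal isomorphism $\bar{f}\colon \widehat{X}_x\rightarrow \widehat{X}_x$ carrying the original $G$-action to the action induced by $\rho(\widetilde{G})$. As $\mathbb{T}_{{\rm Cl}(X)}$ is normal in $\widetilde{G}$ and $\rho$ sends it onto the standard characteristic quasi-torus, the image $\rho(\widetilde{G})$ normalizes $\mathbb{T}_{{\rm Cl}(X)}$ inside ${\rm GL}_{\rho+n}(\kk)$; that is, $\rho(\widetilde{G})\leqslant N$. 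Passing to the quotient by $\mathbb{T}_{{\rm Cl}(X)}$ gives
\[
G\simeq \rho(\widetilde{G})/\mathbb{T}_{{\rm Cl}(X)}\leqslant N/\mathbb{T}_{{\rm Cl}(X)}=G(X,x),
\]
and $\bar{f}$ together with this identification is exactly a formal equivalence, in the sense of Definition~\ref{def:form-equiv}, between the $G$-action on $X$ and the displayed subgroup of $G(X,x)$.

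The step I expect to be the main obstacle is the linearization itself and, more precisely, arranging it compatibly with $\mathbb{T}_{{\rm Cl}(X)}$ so that it descends. The bare linearization of a reductive action at a smooth fixed point is standard (one can also prove it directly by inductively removing higher-order terms, using complete reducibility of $\widetilde{G}$-representations to solve the cohomological obstructions order by order). What requires care is that the linearizing isomorphism can be taken equivariant for the normal subgroup $\mathbb{T}_{{\rm Cl}(X)}$; here the key simplification is that $\mathbb{T}_{{\rm Cl}(X)}$ is already linear, so it is automatically fixed by the isotropy identification and the equivariance is free. Everything else is formal descent along the good quotient $\mathbb{A}^{\rho+n}\rightarrow X$ and the definition of $G(X,x)=N/\mathbb{T}_{{\rm Cl}(X)}$.
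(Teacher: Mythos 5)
Your proposal is correct and follows essentially the same route as the paper: lift $G$ to $\widetilde{G}$ on $(\mathbb{A}^{\rho+n},0)$ via Proposition~\ref{prop:lifting-aut}, linearize the reductive action at the origin by Luna's slice theorem, and descend through the quotient by $\mathbb{T}_{{\rm Cl}(X)}$ to land in $G(X,x)=N/\mathbb{T}_{{\rm Cl}(X)}$. The only difference is one of detail: you spell out why the linearizing isomorphism can be taken $\mathbb{T}_{{\rm Cl}(X)}$-equivariantly (and hence descends) and why $\rho(\widetilde{G})$ normalizes the quasi-torus, points the paper absorbs into its commutative diagram without comment.
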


\begin{proof}
By Proposition~\ref{prop:lifting-aut}, we may lift the action
of $G$ on $(X,x)$ to $\widetilde{G}$ on $(\mathbb{A}^{\rho+n},0)$.
The group $\widetilde{G}$ is reductive,
hence by Luna's slice theorem, 
the action of $\widetilde{G}$ on the local completion of $\mathbb{A}^{\rho+n}$
is equivalent to the action of a reductive subgroup of ${\rm GL}_{\rho+n}(\kk)$.
Thus, we have a commutative diagram:
\begin{equation}\label{diag} 
\xymatrix{
\widetilde{G} \acts (\mathbb{A}^{\rho+n},0) \ar[r]\ar[d]_-{/\mathbb{T}_{{\rm Cl}(X)}} & 
\widetilde{G} \acts \widehat{\mathbb{A}}^{\rho+n}_0\ar[r]^-{\cong} \ar[d]_-{/\mathbb{T}_{{\rm Cl}(X)}} &\widehat{\mathbb{A}}^{\rho+n}_0 \actsr \widetilde{G}' \ar[d]^-{/\mathbb{T}_{{\rm Cl}(X)}}&
(\mathbb{A}^{\rho+n},0) \actsr \widetilde{G}' \ar[l]\ar[d]^-{/\mathbb{T}_{{\rm Cl}(X)}} \\
G\acts (X,x) \ar[r] &
G\acts \widehat{X}_x \ar[r]^-{\cong} & 
\widehat{X}_x \actsr  G' & 
(X,x)\actsr G' \ar[l].
}
\end{equation}
Here, $\widetilde{G}'\leqslant {\rm GL}_{\rho+n}(\kk)$ is a subgroup
normalizing the characteristic quasi-torus $\mathbb{T}_{{\rm Cl}(X)}\leqslant 
{\rm GL}_{\rho+n}(\kk)$.
The middle isomorphisms are equivariant with respect to the reductive group actions.
Furthermore, $\widetilde{G}\simeq \widetilde{G}'$
and $G\simeq G'$.
We conclude that $G'\leqslant G(X,x)\leqslant {\rm Aut}(X,x)$.
This proves the lemma.
\end{proof}

\begin{remark}{\em 
Let $x\in X$ be an affine toric variety
and $G\leqslant {\rm Aut}(X,x)$ be a finite subgroup.
Lemma~\ref{lem:diag} says that, if we want to study the action
of $G$ formally around the point $x\in X$, then
we can always assume the existence of a linear algebraic group
$G\leqslant G(X,x)\leqslant {\rm Aut}(X,x)$ so that
$G^0(X,x)\geqslant \mathbb{T}_X$.
This Lemma is a natural generalization of the fact that 
in order to study quotient singularities, we
can assume that the acting group holds $G\leqslant {\rm GL}_n(\kk)$
(see, e.g.,~\cite{Kol13}*{3.2}).
Another advantage of Lemma~\ref{lem:diag} is that it allows us
to keep working with affine automorphisms, instead of 
considering germ automorphisms.
Furthermore, $G(X,x)$ will naturally contain some automorphisms of $(X,x)$.
Note that the construction of $G(X,x)$ only depends
on the affine variety $x\in X$.
The next step is to understand the 
discrete group $G(X,x)/G^0(X,x)$
using the geometry of the affine toric variety $X$.
}
\end{remark}

\begin{definition}{\em 
Let $X$ and $X'$ be two toric varieties
with character spaces $M$ and $M'$ respectively.
In this case, we consider
$\mathbb{T}_X=T_M$ and 
$\mathbb{T}_{X'}=T_{M'}$.
An {\em outer toric morphism} is a pair $(f,\tau)$ 
where $f\colon X\rightarrow X'$ is a morphism
and $\tau\colon \mathbb{T}_X\rightarrow \mathbb{T}_{X'}$ is a group morphism
so that $f(\lambda \cdot x)=\tau(\lambda)\cdot f(x)$
for all $x\in X$ and $\lambda\in \mathbb{T}_X$.
We say that an outer toric morphism
is an {\em outer toric automorphism} if $f$ and $\tau$
are isomorphisms.
We say that an outer toric automorphism 
is a {\em toric automorphism} if $\tau$ is the identity.
We denote by ${\rm Aut}_{\rm out}(X)$ the group of
outer toric automorphisms of $X$.
In particular, we have that
$\mathbb{T}_X \leqslant {\rm Aut}_{\rm out}(X)$,
where $n$ is the dimension of $X$.
}
\end{definition}

\begin{definition}{\em 
Let $X$ be a toric variety and
$\Sigma\subset N_\qq$ be its defining fan.
Let $\tau \colon M\rightarrow M$ be an automorphism
of the characters lattice $M$.
Then, we have an induced transpose automorphism
$\tau^* \colon N_\qq \rightarrow N_\qq$.
We denote by ${\rm Aut}_\Sigma(M)$ the subgroup of
automorphisms of $M$ for which
$\tau^*\sigma \in \Sigma$ for every $\sigma \in \Sigma$.
Then, every element of ${\rm Aut}_\Sigma(M)$ induces
an element of ${\rm Aut}_{\rm out}(M)$.
Note that we have a natural monomorphism
${\rm Aut}_\Sigma(M)\rightarrow {\rm Aut}_{\rm out}(X)$.
In Lemma~\ref{lem:aut-sigma-out}, we will see that
${\rm Aut}_{\rm out}(X)$ is essentially the subgroup of
${\rm Aut}(\mathbb{T}_X)$ which extends to ${\rm Aut}(X,x)$
and ${\rm Aut}_{\Sigma}(M)\cap \mathbb{T}_X=\{e\}$.
}
\end{definition}

\begin{lemma}\label{lem:aut-sigma-out}
Let $X$ be a toric variety of dimension $n$
with defining fan $\Sigma$.
We have an exact sequence
\[
1\rightarrow \mathbb{T}_X \rightarrow {\rm Aut}_{\rm out}(X)
\rightarrow {\rm Aut}_\Sigma(M)\rightarrow 1.
\]
\end{lemma}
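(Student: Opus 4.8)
The plan is to build the surjection on the right explicitly and then read off both the kernel and the image. Given an outer toric automorphism $(f,\tau)\in {\rm Aut}_{\rm out}(X)$, its torus component $\tau\colon \mathbb{T}_X\to\mathbb{T}_X$ is a group automorphism of $T_M={\rm Spec}(\kk[M])$ and hence is induced by a unique automorphism of the character lattice $M$, which I again denote $\tau$. Sending $(f,\tau)\mapsto\tau$ defines a homomorphism $\Phi\colon {\rm Aut}_{\rm out}(X)\to {\rm Aut}(M)$, and the whole lemma reduces to showing that $\Phi$ has image exactly ${\rm Aut}_\Sigma(M)$ and kernel exactly $\mathbb{T}_X$.

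First I would check that $\Phi$ lands in ${\rm Aut}_\Sigma(M)$. The defining relation $f(\lambda\cdot x)=\tau(\lambda)\cdot f(x)$ says that $f$ is a toric morphism $X(\Sigma)\to X(\Sigma)$ equivariant over the torus isomorphism $\tau$; since $f$ is an isomorphism of varieties it is an equivariant isomorphism. By the functoriality of the fan construction, such an isomorphism corresponds to a lattice automorphism of $N$ carrying $\Sigma$ bijectively onto itself, and this automorphism is precisely the transpose $\tau^*$ appearing in the definition of ${\rm Aut}_\Sigma(M)$. Concretely, via the orbit-cone correspondence $f$ must permute the torus orbits of $X$ in the pattern dictated by $\tau^*$ on $N_\qq$, which forces $\tau^*\sigma\in\Sigma$ for every $\sigma\in\Sigma$; thus $\tau\in {\rm Aut}_\Sigma(M)$.

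For the kernel, an element $(f,\tau)$ lies in $\ker\Phi$ exactly when $\tau={\rm id}$, i.e.\ when $f$ is a toric automorphism in the sense of the definition. Such an $f$ is $\mathbb{T}_X$-equivariant, so it preserves the unique dense orbit $\mathbb{T}_X\subset X$ and restricts there to an equivariant self-map; setting $t_0:=f(e)$ and using equivariance gives $f(\lambda)=\lambda\cdot t_0$ on the open orbit, so $f$ agrees with the action of $t_0\in\mathbb{T}_X$ on the dense set $\mathbb{T}_X$ and therefore on all of $X$ by separatedness. This identifies $\ker\Phi$ with $\mathbb{T}_X$ and shows the left-hand map is injective. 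Surjectivity is then formal: starting from $\tau\in {\rm Aut}_\Sigma(M)$, the transpose $\tau^*$ is an automorphism of $N$ with $\tau^*\Sigma=\Sigma$, so functoriality yields an equivariant isomorphism $f\colon X(\Sigma)\to X(\Sigma)$ over $\tau$, and $(f,\tau)$ is an outer toric automorphism with $\Phi(f,\tau)=\tau$. The step I expect to be the real obstacle is the well-definedness in the second paragraph, namely that the variety isomorphism $f$ forces exactly the associated lattice automorphism to preserve $\Sigma$; this needs the uniqueness half of toric functoriality, or equivalently a careful bookkeeping of the orbit-cone correspondence, after which the kernel computation and surjectivity drop out formally.
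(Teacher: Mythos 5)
Your proof is correct and follows essentially the same route as the paper's: both project an outer toric automorphism to the lattice automorphism it induces on $M$ (the paper phrases this as restricting to the open torus $\mathbb{T}_X\subset X$), identify the image with ${\rm Aut}_\Sigma(M)$ via fan functoriality, and identify the kernel with translations by $\mathbb{T}_X$. The only difference is that you spell out the functoriality and density arguments that the paper's four-line proof leaves implicit.
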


\begin{proof}
It is clear that $\mathbb{T}_X$ is a normal subgroup of ${\rm Aut}_{\rm out}(X)$. 
Note that every element of ${\rm Aut}_{\rm out}(X)$ induces an automorphism of $\mathbb{T}_X\leqslant X$.
The image of the restriction homomorphism
${\rm Aut}_{\rm out}(X)\rightarrow {\rm Aut}(\mathbb{T}_X)$
equals ${\rm Aut}_{\Sigma}(M)$.
Two elements of ${\rm Aut}_{\rm out}(X)$ restrict to the same automorphism of the torus if and only if they differ
by an element of $\mathbb{T}_X$.
\end{proof}

\begin{remark}
{\em 
Let $X$ be an affine toric variety
and $x\in X$ its unique closed point.
The group ${\rm Aut}_{\rm out}(X)$ is contained in $G(X,x)$,
where $G(X,x)$ is as in 
Definition~\ref{def:g(X,x)}.
Indeed, both $\mathbb{T}_X$ and ${\rm Aut}_{\Sigma}(M)$
are contained in $G(X,x)$.
The former, since $\mathbb{T}_{{\rm Cox}(X)}\leqslant N$.
The latter, since ${\rm Aut}_{\Sigma}(M)$ lifts to the Cox ring 
as a subgroup of $S_{\rho+n}$ acting on the set of variables $x_1,\dots,x_{\rho+n}$
that normalizes the characteristic quasi-torus.

Moreover, the group ${\rm Aut}_{\rm out}(X)$ is reductive.
Although, it may not be connected.
In any case, we have a sequence of subgroups
\[
\mathbb{T}_X \leqslant {\rm Aut}_{\rm out}(X)
\leqslant G(X,x)\leqslant {\rm Aut}(X,x).
\]}
\end{remark}

\begin{lemma}\label{comp-g-g0}
Let $G$ be a linear algebraic group
and $\mathbb{T}\leqslant G$ be a maximal torus.
For any $f\in G$, we can find $f_0\in G^0$ 
and $\tau \in {\rm Aut}(\mathbb{T})$ so that
the automorphism $\bar{f}=f_0 f$, holds that
\begin{equation}
\bar{f}t = \tau(t)\bar{f}
\end{equation}
for every $t\in \mathbb{T}$.
\end{lemma}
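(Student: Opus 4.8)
The plan is to reinterpret the desired relation $\bar f t = \tau(t)\bar f$ as the statement that $\bar f$ normalizes $\mathbb{T}$, and then to produce such an $\bar f$ from the conjugacy of maximal tori in a connected linear algebraic group. Indeed, the equation $\bar f t = \tau(t)\bar f$ holding for all $t\in\mathbb{T}$ is equivalent to requiring $\bar f \mathbb{T}\bar f^{-1}=\mathbb{T}$, together with the definition $\tau(t):=\bar f t \bar f^{-1}$. Thus it suffices to find $f_0\in G^0$ with $\bar f = f_0 f$ normalizing $\mathbb{T}$; the automorphism $\tau$ is then automatically the inner automorphism of $\mathbb{T}$ induced by conjugation by $\bar f$.

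First I would record two preliminary observations. Since $\mathbb{T}$ is connected it is contained in $G^0$, and it is in fact a maximal torus of $G^0$: a torus of $G^0$ properly containing $\mathbb{T}$ would be a torus of $G$ properly containing $\mathbb{T}$, contradicting maximality in $G$. Next, fixing $f\in G$, conjugation $c_f\colon g\mapsto f g f^{-1}$ is an automorphism of $G$ preserving the identity component $G^0$, as $G^0$ is characteristic. Since $c_f$ restricts to an automorphism of the connected algebraic group $G^0$, it carries maximal tori of $G^0$ to maximal tori of $G^0$; hence $f\mathbb{T}f^{-1}=c_f(\mathbb{T})$ is again a maximal torus of $G^0$.

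Now I invoke the conjugacy theorem for maximal tori in a connected linear algebraic group (see, e.g.,~\cite{Bor91} or~\cite{Hum75}): the two maximal tori $\mathbb{T}$ and $f\mathbb{T}f^{-1}$ of $G^0$ are conjugate by an element of $G^0$. That is, there exists $f_0\in G^0$ with $f_0\,(f\mathbb{T}f^{-1})\,f_0^{-1}=\mathbb{T}$. Setting $\bar f = f_0 f$, this reads $\bar f \mathbb{T}\bar f^{-1}=\mathbb{T}$, so conjugation by $\bar f$ defines $\tau\in{\rm Aut}(\mathbb{T})$, and $\bar f t = \tau(t)\bar f$ for every $t\in\mathbb{T}$, as required.

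There is essentially no serious obstacle: the entire content is the conjugacy of maximal tori, valid precisely because $G^0$ is connected. The only points demanding a little care are verifying that $\mathbb{T}$ stays maximal inside $G^0$, and ensuring the conjugating element lies in $G^0$ rather than merely in $G$ — the latter being exactly what allows it to be absorbed into $f_0\in G^0$.
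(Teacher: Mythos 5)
Your proof is correct and follows essentially the same route as the paper: conjugation by $f$ carries $\mathbb{T}$ to another maximal torus of $G^0$, the conjugacy theorem for maximal tori in the connected group $G^0$ produces $f_0\in G^0$ with $f_0\,(f\mathbb{T}f^{-1})\,f_0^{-1}=\mathbb{T}$, and then $\bar f=f_0f$ normalizes $\mathbb{T}$ so that $\tau$ is conjugation by $\bar f$. Your explicit check that $\mathbb{T}$ remains a maximal torus of $G^0$ is a detail the paper leaves implicit, but otherwise the two arguments coincide.
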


\begin{proof}
Let $f$ be an element of $G$.
$f$ induces an automorphism 
$\phi$ on $G^0$ by the conjugation
$g\mapsto fgf^{-1}$.
The torus
$\phi(\mathbb{T})$ is a maximal torus of $G^0$, hence $f_0\phi(\mathbb{T})f_0^{-1}=\mathbb{T}$ for some element $f_0\in G^0$.
Define $\bar{f}=f_0f$.
Note that 
\[
\mathbb{T}\rightarrow
f\mathbb{T}f^{-1}\rightarrow
f_0f\mathbb{T}f^{-1}f_0^{-1}=\mathbb{T}
\]
is an automorphism of the torus
which we denote by $\tau$.
Let $t\in \mathbb{T}$ be an element of the torus.
Then, we have that
\[
\tau(t)\bar{f}=
f_0ftf^{-1}f_0^{-1}f_0f=f_0ft=\bar{f}t.
\]
\end{proof}

\begin{proposition}\label{g-over-g0}
Let $x\in X$ be an affine toric variety.
Then, there is a surjective homomorphism
\[
{\rm Aut}_{\Sigma}(M)\rightarrow
G(X,x)/G^0(X,x).
\]
\end{proposition}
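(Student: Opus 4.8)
The plan is to obtain the map as the composition of the inclusion ${\rm Aut}_{\rm out}(X)\leqslant G(X,x)$, recorded in the remark following Lemma~\ref{lem:aut-sigma-out}, with the quotient $G(X,x)\to G(X,x)/G^0(X,x)$, and then to check surjectivity and that the composite descends along ${\rm Aut}_{\rm out}(X)\twoheadrightarrow {\rm Aut}_\Sigma(M)$. The tool that makes this work is Lemma~\ref{comp-g-g0}, applied to the linear algebraic group $G(X,x)$ with maximal torus $\mathbb{T}_X$.

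First I would check that $\mathbb{T}_X$ is indeed a maximal torus of $G(X,x)$. Recall $G(X,x)=N/\mathbb{T}_{{\rm Cl}(X)}$, where $N$ is the normalizer of the characteristic quasi-torus in ${\rm GL}_{\rho+n}(\kk)$, and that $\mathbb{T}_X$ is the image of the diagonal maximal torus $\mathbb{T}_{{\rm Cox}(X)}$. Since $\mathbb{T}_{{\rm Cox}(X)}$ is abelian it normalizes the diagonal subgroup $\mathbb{T}_{{\rm Cl}(X)}$, so it lies in $N$ and is maximal there; as it contains $\mathbb{T}_{{\rm Cl}(X)}$, its image $\mathbb{T}_X$ is a maximal torus of the quotient $G(X,x)$.

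Next I would apply Lemma~\ref{comp-g-g0} to each $f\in G(X,x)$: it furnishes $f_0\in G^0(X,x)$ and $\tau\in{\rm Aut}(\mathbb{T}_X)$ so that $\bar f=f_0 f$ satisfies $\bar f\lambda=\tau(\lambda)\bar f$ for all $\lambda\in\mathbb{T}_X$. Thus $\bar f$ normalizes $\mathbb{T}_X$ and lies in the same $G^0(X,x)$-coset as $f$. The crucial observation is that $\bar f$ is an outer toric automorphism: it fixes the unique torus-fixed point $x$, and the identity $\bar f\lambda\bar f^{-1}=\tau(\lambda)$ is precisely the condition $\bar f(\lambda\cdot y)=\tau(\lambda)\cdot\bar f(y)$ that exhibits $(\bar f,\tau)\in{\rm Aut}_{\rm out}(X)$. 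Consequently every connected component of $G(X,x)$ contains an element of ${\rm Aut}_{\rm out}(X)$, so the composite ${\rm Aut}_{\rm out}(X)\hookrightarrow G(X,x)\twoheadrightarrow G(X,x)/G^0(X,x)$ is surjective. Finally, because $\mathbb{T}_X\leqslant G^0(X,x)$ by Definition~\ref{def:g(X,x)}, this composite vanishes on $\mathbb{T}_X$ and hence factors through ${\rm Aut}_{\rm out}(X)/\mathbb{T}_X$, which Lemma~\ref{lem:aut-sigma-out} identifies with ${\rm Aut}_\Sigma(M)$. This produces the desired surjective homomorphism ${\rm Aut}_\Sigma(M)\to G(X,x)/G^0(X,x)$.

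I expect the only real friction to be in the two verifications that underlie the abstract application of Lemma~\ref{comp-g-g0}: that $\mathbb{T}_X$ is genuinely maximal in $G(X,x)$, and that the normalizing representative $\bar f$ produced by that lemma is an honest outer toric automorphism of $X$ rather than merely a symmetry of the germ. Both are settled by unwinding the construction of $G(X,x)$ as $N/\mathbb{T}_{{\rm Cl}(X)}$ and the definition of ${\rm Aut}_{\rm out}(X)$, and it is here that the geometric content enters, namely that the component group of the automorphism group of an affine toric variety is governed by the combinatorial symmetries ${\rm Aut}_\Sigma(M)$ of its fan.
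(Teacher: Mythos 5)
Your proof is correct and follows essentially the same route as the paper: apply Lemma~\ref{comp-g-g0} to replace each $f\in G(X,x)$ by a representative $\bar f\in{\rm Aut}_{\rm out}(X)$ lying in the same $G^0(X,x)$-coset, then factor the resulting surjection through ${\rm Aut}_{\rm out}(X)/\mathbb{T}_X\simeq{\rm Aut}_\Sigma(M)$ via Lemma~\ref{lem:aut-sigma-out}. Your explicit verification that $\mathbb{T}_X$ is a maximal torus of $G(X,x)=N/\mathbb{T}_{{\rm Cl}(X)}$, which is needed to invoke Lemma~\ref{comp-g-g0}, is a detail the paper leaves implicit.
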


\begin{proof}
Note that we have a homomorphisms
\[
{\rm Aut}_{\rm out}(X)/\mathbb{T}_X
\rightarrow 
{\rm Aut}_{\rm out}(X)/({\rm Aut}_{\rm out}(X)\cap G^0(X,x)) 
\rightarrow 
G(X,x)/G^0(X,x).
\]
We claim that this composition is surjective.
Let $f\in G(X,x)$.
Let $f_0\in G(X,x)$ be the element constructed in Lemma~\ref{comp-g-g0}
and $\bar{f}=f_0f$.
Then, there exists an automorphism $\tau$ of $\mathbb{T}_X$ so that
\[
\bar{f}t=\tau(t)\bar{f}
\]
for every element $t\in \mathbb{T}_X$.
Hence, $\bar{f}\in {\rm Aut}_{\rm out}(X)$.
We conclude that the image of $\bar{f}$ and $f$ on $G(X,x)/G^0(X,x)$ are the same.
Hence, the above composition is surjective.
By Lemma~\ref{lem:aut-sigma-out}, we conclude that there is a surjective homomorphism
\[
{\rm Aut}_{\rm \Sigma}(M)\rightarrow G(X,x)/G^0(X,x).
\]
\end{proof}

The above Proposition allows us to prove the following Lemma about the discrete group
$G(X,x)/G^0(X,x)$.

\begin{lemma}\label{lem:control-G/G^0}
Let $x\in X$ be a $n$-dimensional affine toric variety.
Then, there exists a constant $k(n)$,
only depending on $n$,
satisfying the following.
Every element of the group
$G(X,x)/G^0(X,x)$ has order at most $k(n)$.
\end{lemma}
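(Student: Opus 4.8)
The plan is to transfer the question, via the surjection furnished by Proposition~\ref{g-over-g0}, to the arithmetic group ${\rm Aut}_\Sigma(M)$, and then to invoke the classical bound on the orders of finite-order elements of ${\rm GL}_n(\zz)$.

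First I would recall that Proposition~\ref{g-over-g0} provides a surjective homomorphism $\pi\colon {\rm Aut}_\Sigma(M)\twoheadrightarrow G(X,x)/G^0(X,x)$. If every element of ${\rm Aut}_\Sigma(M)$ has order at most some constant $k(n)$, then the same holds in the quotient: for $\bar g$ in the target we choose a preimage $g$, and the order of $\bar g=\pi(g)$ divides the order of $g$. Hence it suffices to bound the orders of elements of ${\rm Aut}_\Sigma(M)$. Next, since $x\in X$ is an $n$-dimensional affine toric variety, its character lattice $M$ is free of rank $n$, so ${\rm Aut}(M)\cong {\rm GL}_n(\zz)$ and ${\rm Aut}_\Sigma(M)$ is by definition a subgroup thereof. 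Moreover ${\rm Aut}_\Sigma(M)$ consists of the lattice automorphisms whose transpose preserves the pointed full-dimensional cone $\sigma$ defining $X$; any such automorphism permutes the finitely many rays of $\sigma$ and is determined by that permutation, so ${\rm Aut}_\Sigma(M)$ is finite and all of its elements have finite order.

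Finally I would apply the standard uniform bound on the order of a torsion element of ${\rm GL}_n(\zz)$. If $g\in {\rm GL}_n(\zz)$ has finite order $m$, then its minimal polynomial over $\qq$ is a product of distinct cyclotomic polynomials $\Phi_{d_1},\dots,\Phi_{d_r}$ with $m=\lcm(d_1,\dots,d_r)$, and since this polynomial divides the degree-$n$ characteristic polynomial we obtain $\sum_{i}\varphi(d_i)\le n$. As $\varphi(d)\to\infty$ with $d$, each $d_i$ is bounded by a constant depending only on $n$, and hence so is $m=\lcm(d_1,\dots,d_r)$; taking $k(n)$ to be this bound finishes the argument.

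The only genuinely delicate ingredient is this last step, the bound on the orders of finite-order elements of ${\rm GL}_n(\zz)$ (a classical fact, closely related to Minkowski's theorem on finite subgroups of ${\rm GL}_n(\zz)$); everything preceding it is a formal consequence of Proposition~\ref{g-over-g0} together with the finiteness of ${\rm Aut}_\Sigma(M)$. I do not expect a real obstacle here, but some care is needed to phrase the cyclotomic estimate so that the resulting constant $k(n)$ depends only on $n$ and not on the particular cone $\sigma$.
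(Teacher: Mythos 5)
Your proposal follows essentially the same route as the paper's proof: reduce via the surjection of Proposition~\ref{g-over-g0} to ${\rm Aut}_\Sigma(M)$, observe that its elements have finite order because they are determined by their permutation of the rays of $\Sigma$, and then bound the order of torsion elements of ${\rm Aut}_\Sigma(M)\leqslant {\rm GL}_n(\zz)$ by a constant depending only on $n$. The only difference is that you spell out the classical cyclotomic-polynomial estimate for torsion in ${\rm GL}_n(\zz)$, which the paper simply invokes; this is a correct filling-in of that step, not a different argument.
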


\begin{proof}
We know that $G(X,x)/G^0(X,x)$ is a finite group,
so any element has finite order.
By Proposition~\ref{g-over-g0}, we know that
${\rm Aut}_{\Sigma}(M)\rightarrow G(X,x)/G^0(X,x)$
is a epimorphism.
Let $g\in G(X,x)/G^0(X,x)$ and 
$\tilde{g}\in {\rm Aut}_\Sigma(M)$ an
element mapping to $g$.
Note that $\tilde{g}$ has finite order.
Indeed, we have a monomorphism
${\rm Aut}_{\Sigma}(M)\rightarrow S_{|\Sigma(1)|}$
given by associating to an element of ${\rm Aut}_{\Sigma}(M)$
the corresponding permutation of $\Sigma(1)$.
Then, $\tilde{g}$ is a finite order element of
${\rm Aut}_{\Sigma}(M)\leqslant {\rm Aut}(M)\simeq{\rm GL}_n(\zz)$.
In particular, the order of $\tilde{g}$ is bounded by a function $k(n)$ on $n$.
We conclude that the order of $g$
is bounded by a function $k(n)$ on $n$.
This proves the Lemma.
\end{proof}

We finish this subsection on automorphisms of affine toric varieties, by studying 
finite automorphisms which lifts as the identity on the exceptional divisor of a toric blow-up.

\begin{proposition}
\label{prop:normal-subgroup-identity}
Let $x\in X$ be a $n$-dimensional 
affine toric variety and
$Y\rightarrow X$ be a projective birational toric morphism extracts a divisor $E$ over $x$.
Let $G\leqslant {\rm Aut}(X,x)$ be a finite subgroup.
Assume that $Y\rightarrow X$ is $G$-equivariant
and $G$ fixes point-wise an exceptional divisor $E\subset Y$.
Then, up to conjugation, we have that
\[
G\leqslant \mathbb{T}_0
\leqslant \mathbb{T}_X 
\leqslant {\rm Aut}(\widehat{X},x),
\]
where $\mathbb{T}_0\leqslant \mathbb{T}_X$ is a one-dimensional sub-torus.
\end{proposition}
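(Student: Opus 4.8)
The plan is to work entirely with the monomial valuation cut out by $E$, exploiting that, since $E$ is extracted by a \emph{toric} morphism over the torus--fixed point $x$, it is an honest toric divisor. Write $X=X(\sigma)=\Spec R$ with $R=\kk[\sigma^\vee\cap M]$ and $\sigma\subset N_\qq$ a full--dimensional pointed cone. A projective birational toric morphism $Y\to X$ refines $\sigma$, and the extracted prime divisor $E$ corresponds to a primitive lattice vector $v_0\in\relint(\sigma)\cap N$; indeed $E$ lies over $x$ precisely when its ray is interior to $\sigma$. Set $\mathbb{T}_0:=\lambda_{v_0}(\mathbb{G}_m)\leqslant \mathbb{T}_X$, the one--parameter subgroup attached to $v_0$; this is the candidate torus of the statement. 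Two framing facts are immediate. First, the $\mathbb{T}_0$--weight of $\chi^u$ equals $\langle u,v_0\rangle=\ord_E(\chi^u)$, so the grading of $R$ by $\mathbb{T}_0$ coincides with the $\ord_E$--filtration; since $v_0\in\relint(\sigma)$ and $\sigma$ is full--dimensional, $\langle u,v_0\rangle>0$ for every $u\in(\sigma^\vee\cap M)\setminus\{0\}$, so this grading is positive with $R_0=\kk$. Second, $\mathbb{T}_0$ is exactly the kernel of $\mathbb{T}_X\to{\rm Aut}(E)$, hence $\mathbb{T}_0$ fixes $E$ pointwise.

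First I would show $G$ is cyclic. Since $G$ fixes $E$ pointwise it preserves $E$ as a divisor, hence preserves $w:=\ord_E$, and it acts on the DVR $\mathcal{O}_{Y,E}$ fixing the residue field $\kk(E)$ pointwise. Passing to the completion $\widehat{\mathcal{O}}_{Y,E}\cong K[[t]]$ (with $K\supset\kk(E)$ and $t$ a uniformizer), each $g\in G$ acts by $g(t)=\chi(g)\,t+O(t^2)$ with $\chi(g)$ a root of unity, hence lying in $\kk^\times$ as $\kk$ is algebraically closed; this defines a character $\chi\colon G\to\kk^\times$. If $g\in\ker\chi$ then $g$ fixes $K$ and is tangent to the identity, so by the standard fact that a finite--order formal automorphism tangent to the identity is trivial in characteristic zero, $g$ acts trivially on $\widehat{\mathcal O}_{Y,E}$, hence on $\operatorname{Frac}\mathcal{O}_{Y,E}=\kk(X)$, whence $g=\id$. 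Therefore $\chi$ is injective and $G\cong\mu_m$ is cyclic, with $\chi\colon G\hookrightarrow\mathbb{G}_m=\mathbb{T}_0$ recording the action on the conormal direction of $E$.

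Finally I would identify the $G$--action, up to formal conjugacy, with $\chi(G)\subset\mathbb{T}_0$. Because $G$ preserves the $w$--filtration $\mathfrak{a}_i=\{f:w(f)\ge i\}$, it acts on $\operatorname{gr}_w R$, which for a monomial valuation is canonically $R$ with its $\mathbb{T}_0$--grading. A leading--term computation shows this graded action is exactly $\chi$ into $\mathbb{T}_0$: if $w(f)=i$ then near the generic point of $E$ one has $f=u\,t^i$ with $u$ a unit, so $g^*f\equiv\chi(g)^i f\pmod{\mathfrak{a}_{i+1}}$, i.e.\ $g$ acts on $\mathfrak{a}_i/\mathfrak{a}_{i+1}$ by the scalar $\chi(g)^i$. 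It remains to conjugate the filtered action into this graded (torus) action inside ${\rm Aut}(\widehat{X},x)$: the automorphisms inducing the identity on $\operatorname{gr}_w$ form a pro--unipotent group $U$, the filtered action is a lift through $U$ of the graded one, and since $G$ is finite and $\kk$ has characteristic zero the relevant cohomology into $U$ vanishes, so the two actions are conjugate by an element of $U$. This realizes $G$, up to formal equivalence, as $\chi(G)\leqslant\mathbb{T}_0\leqslant\mathbb{T}_X\leqslant{\rm Aut}(\widehat{X},x)$, as claimed. The main obstacle is precisely this last linearization step, passing from \emph{``$G$ preserves the filtration''} to \emph{``$G$ is conjugate to a graded, diagonal action''}, where reductivity of the finite group and the characteristic--zero hypothesis are essential; the toric input (that $E$ is monomial and $\operatorname{gr}_w R\cong R$ with a positive grading) is what forces the resulting graded action to land in the one--dimensional torus $\mathbb{T}_0$.
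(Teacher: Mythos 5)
Your proposal is correct, but it takes a genuinely different route from the paper. The paper's proof never touches the valuation-theoretic structure of $\ord_E$ directly: it first proves cyclicity via the faithful tangent representation of $G$ at a smooth point of $E$ fixing a hyperplane (citing Flenner--Zaidenberg), then uses the Cox-ring lifting machinery (Lemma~\ref{lem:diag}, Proposition~\ref{prop:lifting-aut}) to place $G$ inside the linear algebraic group $G(X,x)$, introduces the algebraic subgroup $G''(X,x)$ of elements lifting to $Y$ and restricting to the identity on $E$, shows by a dimension count that $\mathbb{T}_0$ is a maximal torus of its identity component, proves $G''(X,x)$ is \emph{connected} by an orbit-closure argument on the chart $\mathbb{G}_m^{n-1}\times\mathbb{A}^1$ together with Lemmas~\ref{comp-g-g0} and~\ref{lem:aut-sigma-out}, and finally conjugates the semisimple generator of the cyclic group $G$ into the maximal torus by standard algebraic group theory. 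You instead exploit that $\ord_E$ is the monomial valuation attached to $v_0\in\relint(\sigma)\cap N$: cyclicity comes from the injective character $\chi$ on the conormal line at the generic point of $E$ (injectivity being the char-$0$ fact that a finite-order automorphism inducing the identity on the residue field and on $\mathfrak{m}/\mathfrak{m}^2$ is trivial), the graded action on $\operatorname{gr}_{\ord_E}R\cong R$ is identified with $\chi(G)\subset\mathbb{T}_0$ by a leading-term computation, and the passage from the filtered to the graded action is a linearization via vanishing of nonabelian $H^1$ of the finite group $G$ with values in the pro-unipotent group $U$ of automorphisms trivial on $\operatorname{gr}$. Your approach buys self-containedness (no Cox rings, no connectedness argument, and it produces exactly the formal conjugacy the statement asks for, since the conjugating element lies in $U\leqslant{\rm Aut}(\widehat{X},x)$); the paper's approach buys more structural information, realizing the whole pointwise stabilizer of $E$ as a connected algebraic group with maximal torus $\mathbb{T}_0$ and keeping the conjugation within the algebraic category. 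Two points in your write-up deserve more care, though neither is a genuine gap: the phrase ``$g$ fixes $K$'' should read ``$g$ induces the identity on the residue field'' (a coefficient field need not be $g$-stable a priori; the triviality statement still follows by an elementary induction on powers of the maximal ideal, using char $0$ to kill the derivation-valued discrepancy at each step), and the $H^1$-vanishing for the pro-unipotent $U$ should be justified by noting that each layer $U^{(j)}/U^{(j+1)}$ embeds in the finite-dimensional space of degree-$j$ graded derivations of $\operatorname{gr}R$, so the cocycle can be trivialized by successive averaging and the corrections converge in the inverse limit.
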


\begin{proof}
We claim that $G$ is cyclic group.
Indeed, let $e\in E$ be a smooth point.
Then, $G$ has a faithful representation in $T_eE$ fixing a hyperplane pointwise
(see, e.g.,~\cite{FZ05}*{Lemma 2.6.(b)}).
We conclude that $G$ is either cyclic.

By Lemma~\ref{lem:diag}, 
we may assume that $G$ is a subgroup of the linear algebraic group $G(X,x)$.
We denote by $G'(X,x)$ the subgroup of $G(X,x)$ corresponding to elements which lift to $Y$, i.e., elements which preserve the ideal defining the blow-up.
Then, $G'(X,x)$ is a linear algebraic group.
Let $G''(X,x)\leqslant G'(X,x)$ be the subgroup generated by elements which 
restrict to the identity on $E$.
Since $E$ is a projective toric variety, we can find a finite number of points $e_1,\dots,e_k$ so that an automorphism of $E$ is the identity if and only if it fixes such points.
Then, $G''(X,x)$ equals the subgroup of $G'(X,x)$ which fixes the points $e_1,\dots,e_k\in Y$.
In particular, $G''(X,x)$ is a linear algebraic group.
Note that $G\leqslant G''(X,x)$.

We claim that $G''(X,x)$ contains a one-dimensional torus.
Since $Y\rightarrow X$ is a toric morphism, then we have that 
$\mathbb{T}_X$ acts on $Y$
and fixes the divisor $E$.
The torus $\mathbb{T}_E$ of $E$ has
dimension $n-1$.
Hence, the surjective homomorphism
$\mathbb{T}_X\rightarrow \mathbb{T}_E$,
induced by the restriction to $E$,
has kernel a one-dimensional torus $\mathbb{T}_0$.
Note that $\mathbb{T}_0$ acts trivially on $E$.
Hence, $\mathbb{T}_0$ is contained in the connected component of $G''(X,x)$.
By dimension reasons, a maximal torus of the connected component of $G''(X,x)$ is at most one dimensional.
Indeed, the dimension of the acting torus is at most the codimension 
of the fixed point set.
Thus, $\mathbb{T}_0$ is a maximal torus of the connected component of $G''(X,x)$. 

We claim that $G''(X,x)$ is connected.
We denote by $G''_0(X,x)$ its connected component.
Let $f\in G''(X,x)$ be any element.
By Lemma~\ref{comp-g-g0}, we know that there exists $f_0\in G''_0(X,x)$ so that
$\bar{f}=f_0f$ satisfies
\[
\bar{f}t=\tau(t)\bar{f}
\]
for an automorphism $\tau$ of $\mathbb{T}_0$. 
We show that for every point
$y\in Y$, we have that
\begin{equation}\label{int-orb} 
\overline{\mathbb{T}_0\cdot \bar{f}(y)}
\cap 
\overline{\mathbb{T}_0\cdot y}\neq \emptyset.
\end{equation}
Let $y\in Y$ be a point outside $E$.
Let 
\[
y_0 =\overline{\mathbb{T}_0\cdot y}\cap E
\text{ and } y_1=\overline{\mathbb{T}_0\cdot \bar{f}(y)}\cap E.
\]
Then, we have that 
\[
y_0=
\bar{f}(y_0)=
\lim_{t\rightarrow 0}\bar{f}(t\cdot y)=
\lim_{t\rightarrow 0}\tau(t)\cdot \bar{f}(y)=
y_1.
\]
This shows the non-emptyness of equation~\eqref{int-orb} and that $\tau(t)=t$. 
In particular, if there is a $\mathbb{T}_0$-orbit closure which is disjoint from all other 
$\mathbb{T}_0$-orbit closures,
then we have that $\bar{f}$ fixes such $\mathbb{T}_0$-orbit closure.

Let $U_Y\subset Y$ be the complement of all the torus invariant divisors of $Y$ except $E$.
Then $U_Y\simeq \mathbb{G}_m^{n-1}\times \mathbb{A}^1$ and $\mathbb{T}_0$ acts 
on $U_Y$ by multiplication on the last coordinate.
In particular, all orbit closures of $\mathbb{T}_0$ on $U_Y$ are disjoint.
By equation~\eqref{int-orb}, we conclude that on $U_Y$, the automorphism $\bar{f}$
fixes the $\mathbb{T}_0$-orbits.
Hence, $\bar{f}$ is an automorphism of the torus which extends to $Y$.
Thus, $\bar{f}$ is an element of ${\rm Aut}_{\rm out}(X)$.
We conclude that $\bar{f}$ is an automorphism of $\mathbb{G}^n_m$,
that extends to $\mathbb{G}_m^{n-1}\times \mathbb{A}^1$ and acts trivially on $\mathbb{G}_m^{n-1}\times \{0\}$.
In coordinates $(x_1,\dots, x_n)$ of $\mathbb{G}_m^{n-1}\times \mathbb{A}^1$,
we have that \
\[
\bar{f}(x_1,\dots,x_n)=(x_1,\dots,x_{n-1}, \lambda x^\alpha x_n).
\]
Here,
$x^\alpha$ is a monomial on $x_1,\dots,x_{n-1}$ and $\lambda$ is a constant.
By Lemma~\ref{lem:aut-sigma-out}, we conclude that a finite power of $\bar{f}$
belongs to $\mathbb{T}_X$.
Thus, we have that $x^\alpha=1$ and
$\bar{f}\in \mathbb{T}_X$.
Furthermore, the only elements of the torus which fix $E$ pointwise
are those in $\mathbb{T}_0$.
Hence, we conclude that $\bar{f}\in \mathbb{T}_0 \leqslant G''_0(X,x)$.
Thus, for any element $f\in G''(X,x)$, we can find $\bar{f}\in G''_0(X,x)$ and $f_0\in G''_0(X,x)$, so that
$f=f_0^{-1}\bar{f}$.
This proves that $G''_0(X,x)=G''(X,x)$.
Hence, $G''(X,x_0)$ is a connected linear algebraic group.

Since $G$ is a finite cyclic group, 
then it is generated by a semisimple element, so it conjugates inside the maximal torus $\mathbb{T}_0$.
\end{proof}

\subsection{Kawamata log terminal singularities}

In this subsection, we recall the definition of Kawamata log terminal singularities
and recall that toric singularities are klt.

\begin{definition}
{\em A {\em log pair} consist of
$(X,B)$ where $X$ is a normal variety 
and $B$ is an effective $\qq$-divisor 
on $X$ so that $K_X+B$ is a 
$\qq$-Cartier $\qq$-divisor.
}
\end{definition} 

\begin{definition}{\em 
Let $X$ be a normal variety.
A {\em divisor over $X$} is a prime divisor $E\subset Y$,
where $Y\rightarrow X$ is a projective birational morphism
from a normal variety.

Let $(X,B)$ be a log pair and $E$ be a 
divisor over $X$.
We define the {\em log discrepancy}
of $(X,B)$ at $E$ to be
\[
a_E(X,B):=
{\rm coeff}_E(K_Y-\pi^*(K_X+B))
\]
where as usual,
we pick $K_Y$ so that $\pi_*K_Y=K_X$.
The definition of the log discrepancy
does not depend on the model $Y$.

A {\em log resolution} of a log pair 
$(X,B)$ is a projective birational 
morphism $\pi\colon Y\rightarrow X$,
with purely divisorial exceptional locus,
so that
\begin{itemize}
\item $Y$ is a smooth variety, and 
\item the $\pi^{-1}_*\Delta+{\rm Ex}(\pi)_{\rm red}$ has simple normal crossing.
\end{itemize}
By Hironaka's resolution of singularities, we know that any
log pair admits a log resolution.
}
\end{definition}

The log discrepancies of a log pair allow us to measure the singularities of the pair.

\begin{definition}{\em 
A pair $(X,\Delta)$ is said to be {\em Kawamata log terminal} (or {\em klt} for short) if all its log discrepancies are positive, 
i.e., $a_E(X,\Delta)>0$ for every prime divisor $E$ over $X$. It is known that a pair $(X,\Delta)$ is klt if and only if all the log discrepancies corresponding to prime divisors on a log resolution of $(X,\Delta)$ are positive (see, e.g.,~\cite{KM98})
}
\end{definition}

The following result says that toric singularities whose
boundary has coefficients less than one has klt singularities (see, e.g.,~\cite{Amb06}).

\begin{proposition}\label{prop:toric-implies-klt}
Let $(X,B)$ be a toric pair so that $\lfloor B\rfloor=0$.
Then $(X,B)$ has Kawamata log terminal singularities.
\end{proposition}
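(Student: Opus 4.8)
The plan is to compute all discrepancies of $(X,B)$ on a single toric log resolution and to read off their positivity from the support function of $K_X+B$. Since $B$ is torus invariant, write $B=\sum_\rho b_\rho D_\rho$, the sum running over the rays $\rho\in\Sigma(1)$ with primitive generators $v_\rho\in N$ and $D_\rho$ the associated invariant prime divisor; the hypothesis $\lfloor B\rfloor=0$ means $0\le b_\rho<1$ for every $\rho$. Because $(X,B)$ is a log pair, $K_X+B$ is $\qq$-Cartier, so, using $K_X=-\sum_\rho D_\rho$ and hence $K_X+B=-\sum_\rho(1-b_\rho)D_\rho$, there is a piecewise linear support function $\psi\colon |\Sigma|\to\rr$, linear on each cone of $\Sigma$, determined by $\psi(v_\rho)=1-b_\rho$ on the primitive ray generators.

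First I would produce a toric log resolution. Subdividing $\Sigma$ to a smooth fan $\Sigma'$ refining $\Sigma$ gives a projective birational toric morphism $\pi\colon Y=X(\Sigma')\to X$ with $Y$ smooth; since the torus invariant divisors of a smooth toric variety meet transversally, the divisor $\pi^{-1}_*B+\mathrm{Ex}(\pi)$ is automatically simple normal crossing. Hence $\pi$ is a log resolution, and by the klt criterion recalled above it suffices to check that the discrepancies of the finitely many prime invariant divisors on $Y$ exceed $-1$. Each such divisor is $E_v$ for a primitive $v\in\Sigma'(1)$, either an old ray $v_\rho$ or a newly introduced one.

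The main content is the toric discrepancy formula: for $E_v$ with $v\in N$ primitive lying in a cone $\sigma\in\Sigma$, the coefficient of $E_v$ in $K_Y-\pi^*(K_X+B)$ equals $\psi(v)-1$. I would derive this by comparing support functions on the refined fan: $\pi^*(K_X+B)$ keeps the support function $\psi$, so its coefficient along $D'_v$ is $-\psi(v)$, while $K_Y=-\sum_w D'_w$ contributes coefficient $-1$ along $D'_v$; subtracting gives $\psi(v)-1$. Writing $v=\sum_i\lambda_i v_{\rho_i}$ as a nonnegative combination of the primitive generators of the rays of $\sigma$ (possible since $v\in\sigma$, with $\psi$ linear on $\sigma$ by $\qq$-Cartierness), linearity yields
\[
\psi(v)=\sum_i\lambda_i\,\psi(v_{\rho_i})=\sum_i\lambda_i\,(1-b_{\rho_i}).
\]

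Finally I would conclude positivity. Each $\lambda_i\ge 0$, not all zero because $v\neq 0$, and each factor $1-b_{\rho_i}>0$ precisely because $\lfloor B\rfloor=0$ forces $b_{\rho_i}<1$; hence $\psi(v)>0$, so the discrepancy $\psi(v)-1$ of $E_v$ exceeds $-1$. The same estimate applied to an old ray gives $\psi(v_\rho)=1-b_\rho$, i.e. discrepancy $-b_\rho>-1$ for $D_\rho$. As every discrepancy on the log resolution $Y$ exceeds $-1$ (equivalently, every log discrepancy is positive), $(X,B)$ is klt. I expect the only step that needs stating with care to be this toric discrepancy formula identifying the relevant coefficient with the value $\psi(v)$ of the support function; once it is in place the positivity is immediate from the strict inequalities $b_\rho<1$, and no genuine obstacle remains.
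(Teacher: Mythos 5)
The paper itself gives no proof of this proposition: it is stated as well known, with a pointer to the literature (\cite{Amb06}; see also \cite{Bor97} and \cite{CLS11}). Your support-function argument is essentially the standard proof of that well-known fact, and its substance is correct: the dictionary between $\qq$-Cartier invariant divisors and piecewise linear functions on $|\Sigma|$, the identification of the log discrepancy of the divisor attached to a primitive $v\in|\Sigma|\cap N$ with $\psi(v)$, and the strict positivity $\psi(v)=\sum_i\lambda_i(1-b_{\rho_i})>0$ (some $\lambda_i>0$ and every $1-b_{\rho_i}>0$ since $\lfloor B\rfloor=0$) are exactly how the toric klt criterion is established in the references the paper invokes. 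So what you supply is a self-contained proof of a statement the paper merely quotes, and it agrees with the standard one.

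One detail deserves more care than you give it. Both the paper's definition of log resolution and the usual one (\cite{KM98}) require the exceptional locus to be purely divisorial, and this is \emph{not} automatic for an arbitrary smooth refinement $\Sigma'$ of $\Sigma$: a smooth subdivision can induce a small contraction. For instance, the cone over a square (the conifold $xy=zw$) can be subdivided into two smooth cones without adding any ray, and then ${\rm Ex}(\pi)$ is a curve, so your assertion that $\pi^{-1}_*B+{\rm Ex}(\pi)$ is ``automatically'' an snc divisor does not parse for such a refinement. This is a fixable technicality rather than a genuine gap, and it can be repaired in either of two standard ways: (i) choose $\Sigma'$ to be obtained from $\Sigma$ by a sequence of star subdivisions at new rays (such a smooth refinement always exists, each star subdivision extracts exactly one invariant divisor, and the composite is projective with divisorial exceptional locus, hence a genuine log resolution); or (ii) bypass log resolutions entirely by writing $K_Y+B_Y=\pi^*(K_X+B)$ on any smooth refinement, observing that $a_E(X,B)=a_E(Y,B_Y)$ for every divisor $E$ over $X$, and that $(Y,B_Y)$ has boundary supported on the invariant snc divisor of $Y$ with all coefficients $1-\psi(v)<1$, so all its log discrepancies are positive by the standard computation for snc pairs. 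With either adjustment your proof is complete.
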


\subsection{Finite quotients} 

In this subsection, we recall the formula to compute log discrepancies
of divisorial valuations over quotients of klt singularities.

\begin{definition}{\em 
Let $(X,B)$ be a log pair.
We denote by ${\rm Aut}(X,B)$ the subgroup of ${\rm Aut}(X)$
consisting of elements $f\in {\rm Aut}(X)$ for which
$f^*B=B$.
Note that $f$ doesn't need to fix each prime component of $B$,
but at least it must permute prime components with the same coefficient.}
\end{definition}

The following proposition is the Riemann-Hurwitz formula applied to pairs
(see, e.g.,~\cite{Sho92}*{2.1}).

\begin{proposition}\label{prop:pull-back}
Let $f\colon X \rightarrow Y$ be a finite morphism and $(Y,B_Y)$ be a log pair on $Y$.
Write $B_Y=\sum_{i=1}^k d_iD_i$ for its prime decomposition.
Then, we can write
\[
f^*\left(
K_Y+\sum_{i=1}^k d_iD_i 
\right) 
=
K_X +\sum_{i=1}^k \sum_{f(E_j)=D_i} \left( 
1-r_j(1-d_i)
\right) E_j,
\]
where $r_j$ is the ramification index of $f$ at $E_j$.
\end{proposition}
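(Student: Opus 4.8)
The plan is to reduce the identity to a coefficient computation at the generic point of each prime divisor of $X$, and to obtain each coefficient by separately pulling back the canonical part $K_Y$ and the boundary part $\sum_i d_i D_i$. Since $f$ is finite and surjective between varieties of the same dimension over a field of characteristic zero, it is separable, so the pullback of a $\qq$-Cartier $\qq$-divisor is well defined and $\qq$-linear; thus it suffices to treat $f^*K_Y$ and each $f^* D_i$ in turn and then add the contributions. I would fix canonical divisors $K_X$ and $K_Y$ that are compatible with $f$, so that the two sides below are genuine divisors rather than merely linear-equivalence classes.

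First I would establish the ramification formula for the canonical divisor. For each prime divisor $E_j \subset X$ with image the prime divisor $D_i = f(E_j) \subset Y$, the local rings $\mathcal{O}_{Y, D_i} \subset \mathcal{O}_{X, E_j}$ form an extension of discrete valuation rings with ramification index $r_j$; because we are in characteristic zero the residue fields have characteristic zero, so the extension is separable and tamely ramified, and the different in codimension one equals $r_j - 1$. This yields
\[
K_X = f^* K_Y + \sum_j (r_j - 1) E_j,
\]
the sum ranging over all prime divisors of $X$, those with $r_j = 1$ contributing nothing. Consequently the coefficient of $E_j$ in $f^* K_Y - K_X$ equals $-(r_j - 1) = 1 - r_j$.

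Next I would record the pullback of the boundary. Directly from the definition of the ramification index, a local uniformizer $t_i$ of $\mathcal{O}_{Y, D_i}$ satisfies $v_{E_j}(t_i) = r_j$, so that
\[
f^* D_i = \sum_{f(E_j) = D_i} r_j E_j
\]
for each prime divisor $D_i$. Hence the summand $d_i f^* D_i$ contributes the coefficient $d_i r_j$ to each $E_j$ lying over $D_i$. Adding this to the canonical contribution computed above, the total coefficient of such an $E_j$ in $f^*(K_Y + B_Y) - K_X$ is
\[
(1 - r_j) + d_i r_j = 1 - r_j(1 - d_i),
\]
which is exactly the claimed coefficient; summing over all $i$ and all $E_j$ over $D_i$ reproduces the right-hand side.

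The only nontrivial ingredient is the ramification formula for $K_X$; once the different computation in codimension one is granted, everything else is the elementary bookkeeping above. The one point that requires care is that the displayed right-hand side records only the prime divisors of $X$ lying over the support of $B_Y$: a prime divisor $E_j$ whose image is not among the $D_i$ carries the pure ramification coefficient $1 - r_j$, which is recovered by reading its image with coefficient $d_i = 0$. In the situations of interest, where the branch locus of $f$ in codimension one is contained in the support of $B_Y$, these extra terms do not arise and the formula holds as displayed.
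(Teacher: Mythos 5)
Your proof is correct. For comparison: the paper contains no proof of Proposition~\ref{prop:pull-back} at all --- it simply cites \cite{Sho92}*{2.1}, this being the standard Riemann--Hurwitz formula for pairs --- and your argument (fix canonical divisors compatible under $f$ via a rational top form, compute the codimension-one different $r_j-1$ using automatic tameness in characteristic zero, record $f^*D_i=\sum_{f(E_j)=D_i} r_j E_j$, and add coefficients at the generic points of the $E_j$) is precisely the standard proof of the cited statement, so there is nothing substantive to contrast. Your closing caveat is also well taken: as displayed, the formula tacitly reads any prime divisor of $X$ lying over the branch locus but outside $f^{-1}(\operatorname{supp} B_Y)$ with coefficient $d_i=0$, and this convention is indeed what is in force in the paper's applications (e.g.\ Proposition~\ref{prop:log-quotient}, where $B_Y$ is constructed exactly so as to absorb the branch divisor).
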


The following proposition follows from Proposition~\ref{prop:pull-back},
see~\cite{Mor20a}*{Proposition 2.18} for the details of the proof.

\begin{proposition}\label{prop:log-quotient}
Let $(X,B)$ be a log pair and $G\leqslant {\rm Aut}(X,B)$ be a finite subgroup.
Let $p\colon X\rightarrow Y:=X/G$ be the quotient morphism.
Then, we can find a boundary $B_Y$ on $Y$ so that
$p^*(K_Y+B_Y)=K_X+B$.
\end{proposition}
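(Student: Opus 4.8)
The plan is to reverse-engineer the coefficients of $B_Y$ from the ramification formula of Proposition~\ref{prop:pull-back}. Since $G\leqslant {\rm Aut}(X,B)$, the divisor $B$ is $G$-invariant, so its prime components are permuted by $G$ and each $G$-orbit of components maps to a single prime divisor on $Y$. Moreover, because $p\colon X\to Y$ is the quotient morphism by the finite group $G$, it is Galois, and hence the ramification index $r_i$ is constant along each prime divisor $D_i\subset Y$; likewise the coefficient $b_i:=\mathrm{coeff}_E(B)$ is the same for every prime $E\subset X$ with $p(E)=D_i$, again by the $G$-invariance of $B$. These two facts are what make the construction below well defined.

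First I would define $B_Y$ explicitly. Let $D_1,\dots,D_k$ be the prime divisors on $Y$ that are either images of components of $B$ or branch divisors of $p$ (i.e. those with $r_i>1$). For each such $D_i$ I set
\[
d_i:=1-\frac{1-b_i}{r_i}, \qquad B_Y:=\sum_{i=1}^k d_i D_i.
\]
The effectiveness $d_i\geq 0$ follows at once, since $b_i\in[0,1]$ forces $1-b_i\leq 1\leq r_i$; in fact $d_i\in[0,1]$, so $B_Y$ is a genuine boundary. The coefficient $d_i$ is exactly the solution of the equation $1-r_i(1-d_i)=b_i$, which is the point of choosing it this way.

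Next I would verify the pullback identity using Proposition~\ref{prop:pull-back} applied to $(Y,B_Y)$. The coefficient of $p^*(K_Y+B_Y)$ along a prime $E$ with $p(E)=D_i$ equals $1-r_i(1-d_i)$, which by the choice of $d_i$ is precisely $b_i=\mathrm{coeff}_E(B)$. A prime divisor of $X$ that is neither a ramification divisor nor a component of $B$ contributes coefficient $0$ on both sides. Comparing coefficients along every prime divisor of $X$ then yields $p^*(K_Y+B_Y)=K_X+B$, as desired.

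The main technical point I expect to be the obstacle is ensuring that $(Y,B_Y)$ is a legitimate log pair, i.e. that $K_Y+B_Y$ is $\qq$-Cartier so that the pullback $p^*(K_Y+B_Y)$ is even defined. This follows from descent for finite quotients: since $K_X+B$ is $\qq$-Cartier and $G$-invariant, a suitable multiple descends to a Cartier divisor on $Y$, which gives the $\qq$-Cartier property of $K_Y+B_Y$. The consistency of the coefficient assignment $d_i$ across all primes lying in a single fiber is precisely what the Galois hypothesis (constancy of $r_i$) and the $G$-invariance of $B$ (constancy of $b_i$) guarantee; without these, the reverse-engineered coefficient would not be well defined, so these hypotheses are exactly where the argument has content.
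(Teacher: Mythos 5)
Your proposal is correct and follows essentially the same route as the paper, which derives this proposition from the Riemann--Hurwitz formula of Proposition~\ref{prop:pull-back} (deferring details to \cite{Mor20a}*{Proposition 2.18}): one reverse-engineers the coefficients $d_i=1-(1-b_i)/r_i$, using that the Galois property and $G$-invariance of $B$ make $r_i$ and $b_i$ constant over each prime divisor of $Y$, and checks $\qq$-Cartierness of $K_Y+B_Y$ by descending a multiple of $K_X+B$ (e.g.\ via a norm argument). Your only presentational quirk is that the $\qq$-Cartier verification logically precedes the application of Proposition~\ref{prop:pull-back}, since that formula presupposes $(Y,B_Y)$ is a log pair, but you acknowledge and resolve this point.
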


\begin{definition}\label{def:log-quotient}{\em 
Let $(X,B)$ be a log pair and $G\leqslant {\rm Aut}(X,B)$ be a finite subgroup.
The pair $(Y,B_Y)$ induced in the quotient in Proposition~\ref{prop:log-quotient} is called the {\em log quotient}
of the pair $(X,B)$ with respect to the action of $G$.}
\end{definition}

\begin{definition}\label{def:toric-quotient}
{\em
Let $x\in X$ be an affine toric variety.
Let $G\leqslant {\rm Aut}(X,x)$ be a finite subgroup.
The log quotient $y\in  (Y,B_Y)$
of $x\in X$ by $G$ is called a
{\rm toric quotient} singularity.
For instance, every quotient singularity
is a toric quotient singularity.
}
\end{definition}

\begin{proposition}\label{prop:quotient-toric}
Let $(X,B)$ be a toric variety and $F\leqslant \mathbb{T}_X$ be a finite subgroup. 
Let $(Y,B_Y)$ be the log quotient of $(X,B)$ by $F$.
Then $(Y,B_Y)$ is a toric pair with respect to the torus action of $\mathbb{T}_Y=\mathbb{T}_X/F$.
Furthermore, if $(X,B)$ is formally toric at $x$,
then $(Y,B_Y)$ is formally toric at $y$.
\end{proposition}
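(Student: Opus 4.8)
The plan is to translate the quotient into the language of lattices and fans, where a quotient by a subgroup of the torus becomes a refinement of the lattice. Write $X=X(\Sigma)$ for a fan $\Sigma$ in $N_\qq$, so that $\mathbb{T}_X=\Spec\kk[M]$ with $M=\Hom(N,\zz)$. A finite subgroup $F\leqslant\mathbb{T}_X$ is the same datum as a finite quotient $M\twoheadrightarrow\widehat{F}$ of diagonalizable groups; setting $M'=\ker(M\to\widehat F)$ and $N'=\Hom(M',\zz)$, I obtain an overlattice $N\leqslant N'$ with $N'/N\cong F$ and $N'_\qq=N_\qq$. First I would invoke the standard fact that the toric morphism $X(\Sigma,N)\to X(\Sigma,N')$ induced by the inclusion $N\hookrightarrow N'$ is exactly the quotient of $X$ by $F=\ker(\mathbb{T}_X\twoheadrightarrow\mathbb{T}_{X(\Sigma,N')})$ (see, e.g., \cite{CLS11}). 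This identifies $Y=X/F$ with the toric variety $X(\Sigma,N')$ and its torus with $\mathbb{T}_Y=\Spec\kk[M']=\mathbb{T}_X/F$, establishing that $Y$ is toric.

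Next I would pin down the boundary. Since $(X,B)$ is a toric pair, $B=\sum_\rho b_\rho D_\rho$ is supported on the torus-invariant divisors indexed by the rays $\rho\in\Sigma(1)$. Because $\Sigma$ is unchanged, these rays are in bijection, and the quotient map $p\colon X\to Y$ carries each $D_\rho\subset X$ onto the corresponding torus-invariant divisor $D'_\rho\subset Y$ with $p^{-1}(D'_\rho)=D_\rho$ set-theoretically. The ramification index along $D_\rho$ is the integer $r_\rho$ determined by $u_\rho=r_\rho u'_\rho$, where $u_\rho$ and $u'_\rho$ are the primitive generators of $\rho$ in $N$ and $N'$ respectively. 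By Proposition~\ref{prop:log-quotient} there is a boundary $B_Y$ with $p^*(K_Y+B_Y)=K_X+B$, and comparing this with the Riemann--Hurwitz formula of Proposition~\ref{prop:pull-back} forces $B_Y=\sum_\rho d'_\rho D'_\rho$ with $d'_\rho=1-(1-b_\rho)/r_\rho$. Since $b_\rho\in[0,1]$ and $r_\rho\geq 1$, each $d'_\rho\in[0,1]$, so $B_Y$ is an effective $\mathbb{T}_Y$-invariant divisor and $(Y,B_Y)$ is a toric pair.

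For the final assertion I would localize the argument by passing to formal completions at the relevant (torus-fixed) point $x$ and its image $y=p(x)$. As $|F|$ is invertible, taking the completion commutes with the finite quotient, giving $\widehat{Y}_y\simeq\widehat{X}_x/F$. By hypothesis $\widehat{X}_x\simeq\widehat{T}_t$ for an affine toric variety $T$ with $t$ torus-invariant, the isomorphism identifying $B$ with a toric boundary. The one delicate point, and the step I expect to be the main obstacle, is to make this isomorphism equivariant for the torus, i.e. to arrange that $F\leqslant\mathbb{T}_X$ acts on $\widehat{T}_t$ through a subgroup of $\mathbb{T}_T$. I would settle this by observing that $\mathbb{T}_X$ and $\mathbb{T}_T$ are maximal tori of the Levi subgroup of ${\rm Aut}(\widehat{T}_t,t)$, hence conjugate (cf. \cite{HM89}); composing the formal isomorphism with a suitable inner automorphism matches them, so $F$ acts on $T$ as a finite subgroup of $\mathbb{T}_T$. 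Granting this, the first part of the proposition applied to the affine toric variety $T$ shows $\widehat{T}_t/F$ is formally toric with $B_Y$ corresponding to a toric boundary, and therefore $(Y,B_Y)$ is formally toric at $y$ in the sense of Definition~\ref{def:formally}.
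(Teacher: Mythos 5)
Your first two paragraphs are correct and amount to the same standard fact the paper uses, just in fan language: the paper writes $X=\Spec(\kk[x^{m_1},\dots,x^{m_k}])$, lets $F$ grade this monomial ring, and observes that the invariant ring is again generated by monomials, which is exactly your passage from $N$ to the overlattice $N'$; your Riemann--Hurwitz computation of the coefficients of $B_Y$ is likewise consistent with Propositions~\ref{prop:pull-back} and~\ref{prop:log-quotient}.

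The gap is in your last paragraph. Even granting the conjugacy of the images of $\mathbb{T}_X$ and $\mathbb{T}_T$ in ${\rm Aut}(\widehat{T}_t,t)$ (which itself needs more than a citation of \cite{HM89}: one must know that $n$-dimensional tori are maximal there and that maximal tori of the Levi subgroup are conjugate), the conjugating automorphism $g$ has no reason to preserve the toric boundary. Your original formal isomorphism identifies $B$ with a toric boundary $B_T$; after composing with $g$, the group $F$ does land in $\mathbb{T}_T$, but $B$ is now identified with $g(B_T)$, which need not be $\mathbb{T}_T$-invariant. So ``the first part applied to $T$'' controls the quotient of the \emph{variety} but says nothing about the boundary, and the conclusion that $(Y,B_Y)$ is a formally toric \emph{pair} does not follow. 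Repairing this requires conjugating inside the automorphism group of the pair $(\widehat{T}_t,B_T)$, for which there is no off-the-shelf Levi theory. Note also that the paper reads the hypothesis differently, in the way it is actually applied in the proof of Theorem~\ref{introthm:quotient-diagram}: there the finite group is already placed inside the maximal torus of the formal toric model, compatibly with the boundary (via Theorem~\ref{introthm:affine-geom-jordan} and Proposition~\ref{prop:normal-subgroup-identity}), so there is nothing to conjugate and the whole proof is your first two paragraphs run with power series instead of polynomials --- the paper's ``replacing brackets with double brackets.'' Under your literal reading, with $F\leqslant \mathbb{T}_X$ acting globally, the Levi argument is also avoidable: ${\rm Stab}_{\mathbb{T}_X}(x)$ is a subtorus containing the stabilizer of $x$ in $F$, and the local product structure of a toric pair along the orbit of $x$ reduces the formal statement to part one; but that is not the argument you gave.
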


\begin{proof}
We can write $X={\rm Spec}(\kk[x^{m_1},\dots,x^{m_k}])$,
where $x^{m_1},\dots,x^{m_k}$ are monomials in the variables $x_1,\dots,x_n$.
The boundary $B$ is defined by the vanishing set of the variables $x_i$'s.
The finite action of $F$ defines a grading of 
$\kk[x^{m_1},\dots,x^{m_k}]$ into a finite abelian group.
The ring of invariants $\kk[x^{m_1},\dots,x^{m_k}]^F$
is isomorphic to
$\kk[x^{m'_1},\dots,x^{m'_k}]$, where the $x^{m'_i}$'s are monomials on the $x^{m_i}$'s.
In particular, $Y:={\rm Spec}(\kk[x^{m'_1},\dots,x^{m'_k}])$
is a monomial ring in the variables $x_1,\dots,x_n$.
We conclude that $Y$ is a toric variety
with respect to the torus action of $\mathbb{T}_Y={\mathbb{T}_X}/F$.
It is clear that the push-forward of $B$ to $Y$ is defined by the vanishing set of the variables $x_i$'s. Hence, $B_Y$ is a toric boundary on $Y$.

For the second statement, the same argument works after replacing brackets
with double brackets.
\end{proof}

\begin{proposition}\label{prop:ld-finite-quotient}
Let $(X,B)$ be a $G$-invariant pair and $(Y,B_Y)$ its log quotient.
Let $E$ be a $G$-invariant divisorial valuation over $X$ 
and $E_Y$ the induced divisorial valuation on $Y$.
Then, we have that
\[
a_{E_Y}(Y,B_Y) = \frac{a_E(X,B)}{r},
\]
where $r$ is the ramification index of the quotient morphism at $E$.
In particular, $r\leq |G|$.
\end{proposition}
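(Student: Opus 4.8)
The plan is to reduce the identity to the Riemann--Hurwitz formula of Proposition~\ref{prop:pull-back} by passing to a common $G$-equivariant model on which both $E$ and $E_Y$ appear as honest prime divisors. First I would choose a projective birational model $g_X\colon W\to X$ realizing the $G$-invariant valuation $E$ as a prime divisor; since $E$ is $G$-invariant, I may take $W$ to carry a $G$-action compatible with $g_X$ so that $E\subset W$ is $G$-invariant. Setting $W_Y:=W/G$ with $g_Y\colon W_Y\to Y$, the quotient morphism $p_W\colon W\to W_Y$ satisfies $g_Y\circ p_W=p\circ g_X$ and carries $E$ onto the prime divisor $E_Y\subset W_Y$ computing the induced valuation, with ramification index exactly the number $r$ in the statement.

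With this square in place, I would compare two expressions for the log pullback of the boundary along $E$. Writing $K_{W_Y}+B_{W_Y}=g_Y^*(K_Y+B_Y)$, the coefficient of $E_Y$ in $B_{W_Y}$ is $1-a_{E_Y}(Y,B_Y)$ by definition of the log discrepancy. Applying Proposition~\ref{prop:pull-back} to the finite morphism $p_W$ along $E_Y$ shows that the coefficient of $E$ in $p_W^*(K_{W_Y}+B_{W_Y})$ equals
\[
1-r\bigl(1-\coeff_{E_Y}(B_{W_Y})\bigr)=1-r\,a_{E_Y}(Y,B_Y).
\]
On the other hand, chasing the square and invoking Proposition~\ref{prop:log-quotient} gives
\[
p_W^*(K_{W_Y}+B_{W_Y})=g_X^*p^*(K_Y+B_Y)=g_X^*(K_X+B),
\]
so the very same divisor is the log pullback of $(X,B)$ to $W$, whose coefficient along $E$ is $1-a_E(X,B)$. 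Equating coefficients yields $1-a_E(X,B)=1-r\,a_{E_Y}(Y,B_Y)$, that is, $a_{E_Y}(Y,B_Y)=a_E(X,B)/r$.

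The bound $r\le|G|$ is then immediate: the ramification index at the $G$-invariant divisor $E$ is the order of its inertia subgroup inside $G$, so $r$ divides $|G|$. The step I expect to be the main obstacle is the model construction of the first paragraph, namely making the two models and the quotient morphism genuinely compatible so that the ramification index of $p_W$ along $E$ coincides with the invariant $r$ attached to the valuation, and so that $E_Y=E/G$ is exactly the divisor computing the induced valuation on $Y$. Once the equivariant square is secured, the discrepancy identity is a one-line consequence of Propositions~\ref{prop:pull-back} and~\ref{prop:log-quotient}.
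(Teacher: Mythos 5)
Your proof is correct and is exactly the intended argument: the paper states Proposition~\ref{prop:ld-finite-quotient} without proof, recalling it as the standard consequence of the Riemann--Hurwitz formula (Proposition~\ref{prop:pull-back}) together with $p^*(K_Y+B_Y)=K_X+B$ (Proposition~\ref{prop:log-quotient}), which is precisely your coefficient comparison on the equivariant quotient square. The model construction you flag as the main obstacle is standard and is the same device the paper itself uses in Proposition~\ref{prop:N-equiv-extraction}: since the valuation $E$ is $G$-invariant, the strict transform of $E$ on the normalization of the main component of the fiber product over $X$ of the $G$-translates of any model extracting $E$ is a $G$-invariant prime divisor, and the induced map to its quotient realizes $E_Y$ with the correct ramification index.
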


The following proposition is well-known, see e.g.,~\cite{dFD14}*{Proposition 6.2}.

\begin{proposition}\label{prop:non-terminal-toric}
Let $(X,B)$ be a toric pair with $\lfloor B\rfloor=0$.
Let $E$ be a prime divisor over $X$.
Asume that $a_E(X,B)<1$.
Then, $E$ is a toric divisorial valuation over $X$.
In particular, there exists a projective toric birational morphism $Y\rightarrow X$
which only extracts $E$.
\end{proposition}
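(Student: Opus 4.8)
The plan is to prove the contrapositive: a divisorial valuation over $X$ that is \emph{not} toric must have log discrepancy at least $1$. Write $v=\ord_E$ for the divisorial valuation attached to $E$, and realize $X=\Spec(\kk[\sigma^\vee\cap M])$. First I would attach to $v$ its \emph{monomial (toric) valuation} $w$: since $v$ is nonnegative and additive on the regular functions $x^m$ (for $m\in\sigma^\vee\cap M$), the assignment $m\mapsto v(x^m)$ extends to a linear functional on $M$, i.e.\ a lattice point $n_v\in\sigma\cap N$, and I set $w:=\ord_{n_v}$. By construction $v(x^m)=w(x^m)$ on every character, and for a general $f=\sum_m c_m x^m$ one gets $v(f)\geq\min_m v(x^m)=w(f)$; thus $v\geq w$ as valuations, so in particular the center of $v$ is contained in the center of $w$. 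It then suffices to show that $v=w$ whenever $a_v(X,B)<1$.

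Next I would pass to a smooth toric model. Choose a projective toric resolution $\pi\colon X'\to X$ whose fan refines $\sigma$ and contains the ray $\qq_{\geq 0}n_v$; then $w=\ord_F$ for a torus-invariant prime divisor $F$ on the smooth toric variety $X'$. Writing $K_{X'}+B_{X'}=\pi^*(K_X+B)$, the divisor $B_{X'}$ is again torus-invariant, hence toric and simple normal crossing, with all coefficients in $[0,1)$ because $(X,B)$ is klt by Proposition~\ref{prop:toric-implies-klt}; crepancy gives $a_v(X,B)=a_v(X',B_{X'})$ and $\coeff_F(B_{X'})=1-a_w(X,B)$. Now there is a dichotomy. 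If the center of $v$ on $X'$ is $F$ itself, then $v=\ord_F=w$ is toric and we are done, the required morphism being the star subdivision of the fan of $X$ along $\qq_{\geq 0}n_v$. Otherwise the center $W:=c_{X'}(v)$ is a proper subvariety of $F$ of codimension at least $2$.

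In this remaining case the key point, and the main obstacle, is to show $a_v(X',B_{X'})>1$. First I would observe that $W$ must meet the open orbit of $F$: if another torus-invariant prime divisor $D_j$ contained $W$, then on a chart its local equation is a character $z_j$ with $\ord_F(z_j)=0$, forcing $v(z_j)=w(z_j)=0$ and contradicting $W\subseteq\{z_j=0\}$. Hence near the generic point of $W$ the boundary is simply $B_{X'}=b'F$ with $b'=1-a_w(X,B)<1$, and since $F$ is cut out by a character we have $v(F)=w(F)=1$. Therefore
\[
a_v(X',B_{X'})=a_v(X',0)-b'\,v(F)=a_v(X',0)-b'.
\]
Finally I would invoke the standard bound over a smooth variety, $a_v(X',0)\geq\codim_{X'}(W)$, which holds because $\ord_E$ is integer-valued on regular functions, so each of the $\codim_{X'}(W)$ local equations of $W$ has $v$-value at least $1$. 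This yields $a_v(X',B_{X'})\geq\codim_{X'}(W)-b'\geq 2-b'>1$, contradicting $a_v(X,B)<1$. The delicate step is exactly this dichotomy: one must rule out a non-toric center by exploiting that the monomialization $w=\ord_F$ pins $W$ to the open orbit of $F$, so that the only boundary contribution near $W$ is the single coefficient $b'<1$ and the codimension bound wins.
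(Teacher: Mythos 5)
The paper never proves this proposition --- it is quoted as well known, with a pointer to \cite{dFD14}*{Proposition 6.2} --- so your attempt has to stand on its own, and it contains one genuine gap, located exactly where the content of the result lies. Your setup is sound: the values on characters define $n_v\in\sigma\cap N$, and since every character is a difference of elements of $\sigma^\vee\cap M$, one has $v=w:=\ord_{n_v}$ on \emph{all} characters, hence $v\geq w$ on every invariant chart of every toric model; this is what pins $c_{X'}(v)$ inside $F$, and your $D_j$-argument correctly places it in the open orbit of $F$. The gap is the claim $v(F)=w(F)=1$. Near the generic point of $W$, the divisor $F$ is cut out by the character $x^{m_1}$ dual to the \emph{primitive} generator $u_1$ of the ray, so $v(F)=\langle n_v,m_1\rangle=c$ where $n_v=c\,u_1$; nothing in your construction forces $n_v$ to be primitive, and it need not be. Concretely, on $\mathbb{A}^2$ blow up the origin (exceptional $E_1$), then a general point $p\in E_1$ corresponding to a tangent direction $y=t_0x$, $t_0\neq 0$ (exceptional $E_2$), then the point $E_2\cap\widetilde{E}_1$ (exceptional $E_3$). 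For $v=\ord_{E_3}$ one computes $v(x)=v(y)=2$ and $v(y-t_0x)=3$, so $v$ is a bona fide divisorial valuation with $n_v=(2,2)$ non-primitive; its center on $X'={\rm Bl}_0\,\mathbb{A}^2$ is the point $p$ in the open orbit of $F=E_1$, and $v(F)=2$.

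This is fatal for the concluding estimate. With the correct value $v(F)=c$, your inequality becomes $a_v(X',B_{X'})=a_v(X',0)-b'c\geq \codim_{X'}(W)-b'c\geq 2-b'c$, which is no longer $>1$ once $b'\geq 1/c$ (and toric boundary coefficients can be arbitrarily close to $1$), so no contradiction results. To close the case $c\geq 2$ one needs the stronger bound $a_v(X',0)\geq v(z_1)+\dots+v(z_k)$ for a regular system of parameters $(z_1,\dots,z_k)$ at the generic point of $W$ with $z_1$ a local equation of $F$: this gives $a_v(X',0)\geq c+(k-1)$, hence $a_v(X',B_{X'})\geq (k-1)+c(1-b')>1$. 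That inequality is true, but it is a genuine theorem --- a comparison of $v$ with its monomial approximation, provable for instance by the Ein--Lazarsfeld--Musta\c{t}\u{a} codimension formula for contact loci in arc spaces --- and not the one-line remark you offer in its place; note that your parenthetical justification of even the weaker codimension bound (``each local equation has $v$-value at least $1$'') only shows $\ord_v(I_W)\geq 1$ and proves no discrepancy estimate. This missing comparison is precisely the engine of the arc-space proof in the reference the paper cites. Two smaller omissions, both easy to repair: the case $n_v=0$, where the center of $v$ meets the open torus and there is no ray to insert into the fan (then $a_v(X,B)\geq 1$ directly, so the case is vacuous under your hypothesis); and the containment $c_{X'}(v)\subseteq F$ is used on $X'$ while you verified $v\geq w$ only on $X$ (fixable exactly by the remark above that $v=w$ on all characters).
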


\begin{proposition}
\label{prop:N-equiv-extraction}
Let $x\in X$ be an affine toric variety.
Let $F\leqslant {\rm Aut}(X,x)$ be a finite subgroup.
Let $E$ be a divisor over $X$ with center $x\in X$.
Assume that $a_E(X,x)=a<1$.
Then, there exists a projective birational 
$F$-equivariant toric morphism 
$Y\rightarrow X$, so that
\begin{enumerate} 
\item $E$ is extracted on $Y$, and 
\item the $F$-orbit of $E$ on $Y$ is the exceptional locus of $Y\rightarrow X$.
\end{enumerate}
In particular, $a_{E'}(X,x)=a$ for each prime exceptional divisor $E'$ on $Y$ over $X$.
\end{proposition}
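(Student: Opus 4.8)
The plan is to reduce the construction to a toric blow-up along a single monomial ideal that is manifestly invariant under $F$, so that toricity and $F$-equivariance hold simultaneously. First I would record that the whole $F$-orbit of $E$ consists of toric valuations. Let $\sigma\subset N_\qq$ be the cone defining $x\in X$, so that $x$ corresponds to $\sigma$ itself, and the assumption that the center of $E$ is $x$ means the primitive generator $v_E\in N$ of $E$ lies in $\relint(\sigma)$. Since each $f\in F$ is an automorphism of $X$ fixing $x$ and the boundary is trivial, log discrepancies are preserved: writing $E_1=E,\dots,E_m$ for the $F$-orbit of $E$, we get $a_{E_i}(X)=a_E(X)=a<1$ for all $i$. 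By Proposition~\ref{prop:non-terminal-toric} each $E_i$ is then a toric divisorial valuation, given by a primitive vector $v_i\in N$; moreover $v_i\in\relint(\sigma)$ because $f$ fixes $x$, so that none of the rays $\rho_i:=\rr_{\ge 0}v_i$ belongs to $\sigma$.

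Next I would build the $F$-invariant ideal. For $k\gg 0$ set $\mathfrak a_{i,k}:=\langle \chi^m : m\in\sigma^\vee\cap M,\ \langle m,v_i\rangle\ge k\rangle\subset R$, the valuation ideal $\{g\in R:\ord_{E_i}(g)\ge k\}$, and let
\[
\mathcal{I}_k:=\bigcap_{i=1}^m \mathfrak a_{i,k}.
\]
Each $\mathfrak a_{i,k}$ is a monomial ideal, hence so is $\mathcal{I}_k$; in particular its normalized blow-up is a toric morphism with respect to $\mathbb{T}_X$. Because $\ord_{E_i}$ is intrinsic to the valuation $E_i$, any $f\in F$ carries $\mathfrak a_{i,k}$ isomorphically onto $\mathfrak a_{f(i),k}$, where $f$ acts on $\{1,\dots,m\}$ by permuting the orbit; hence $f^*\mathcal{I}_k=\mathcal{I}_k$, i.e. $\mathcal{I}_k$ is $F$-invariant. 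Let $Y\to X$ be the normalization of the blow-up of $X$ along $\mathcal{I}_k$. It is projective and birational, and by the universal property of blow-up together with functoriality of normalization, the $F$-action on $X$ lifts to $Y$, so $Y\to X$ is $F$-equivariant.

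It then remains to check that $Y\to X$ extracts exactly $E_1,\dots,E_m$. This is a Newton-polyhedron computation: the fan of $Y$ is the normal fan of the polyhedron
\[
P_k:=\sigma^\vee\cap\bigcap_{i=1}^m\{\, w\in M_\rr : \langle w,v_i\rangle\ge k\,\},
\]
which has recession cone $\sigma^\vee$ since each $v_i\in\sigma$. For $k\gg 0$ the facets of $P_k$ are exactly the facets of $\sigma^\vee$ together with the $m$ hyperplanes $\{\langle\,\cdot\,,v_i\rangle=k\}$, so the rays of the normal fan are precisely the rays of $\sigma$ together with $\rho_1,\dots,\rho_m$. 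Hence the subdivision of $\sigma$ defining $Y$ adds exactly the new rays $\rho_i$, so the exceptional divisors of $Y\to X$ are $E_1,\dots,E_m$, i.e. the $F$-orbit of $E$; in particular $E$ is extracted. Finally, since each exceptional prime $E'$ equals $f(E)$ for some $f\in F$, invariance of log discrepancies under automorphisms gives $a_{E'}(X)=a_E(X)=a$.

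The main obstacle is maintaining $F$-equivariance while keeping the morphism toric: a finite $f\in{\rm Aut}(X,x)$ need not act linearly on $N$, so the naive fan refinement adding the rays $\rho_i$ is not visibly $F$-equivariant. The point is that $F$ nevertheless permutes the finite set of valuations $\{E_i\}$, which is exactly what makes the intersection of valuation ideals $\mathcal{I}_k$ simultaneously monomial (hence toric) and $F$-invariant (hence equivariant). The only delicate verification is the facet count for $P_k$, ensuring no spurious rays are produced and that each $\rho_i$ does appear, which holds for $k$ large.
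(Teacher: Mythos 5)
Your approach is genuinely different from the paper's: the paper forms the klt quotient $(X',\Delta')=(X/F,\Delta')$, extracts the image valuation $E'$ by a projective birational morphism via \cite{BCHM10}*{Corollary 1.4.3}, takes $Y$ to be the normalization of the main component of the base change, and only at the end deduces toricity from Proposition~\ref{prop:non-terminal-toric}; you instead get toricity and equivariance simultaneously by blowing up the $F$-invariant monomial ideal $\mathcal{I}_k=\bigcap_i\mathfrak{a}_{i,k}$, and use Proposition~\ref{prop:non-terminal-toric} at the start rather than at the end. The construction itself is sound (the $F$-invariance of $\mathcal{I}_k$, the lifting of $F$ to the normalized blow-up, and the final log-discrepancy computation are all correct), but the step you defer as ``the only delicate verification'' is a genuine gap, in two distinct ways, and neither is cured by taking $k$ large.

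First, the fan of the normalized blow-up of $\mathcal{I}_k$ is the normal fan of the Newton polyhedron $\operatorname{conv}(P_k\cap M)$, not of $P_k$, and the two can have different facets for arbitrarily large $k$. Concretely, take $X=\mathbb{A}^2$, a single valuation $v=(2,3)$, and $k=7$: the only lattice point of $P_7$ on the line $2a+3b=7$ is $(2,1)$, and the lattice hull has facets with inner normals $(1,1)$ and $(1,2)$ but \emph{no} facet with normal $(2,3)$; so the normalized blow-up extracts $E_{(1,1)}$ and $E_{(1,2)}$ and fails to extract $E_{(2,3)}$ at all, violating both conclusions (1) and (2). Such failures persist for every large $k$ in a fixed congruence class. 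The correct hypothesis is not $k\gg 0$ but $k$ \emph{sufficiently divisible}, so that all vertices of $P_k=kP_1$ are lattice points and hence $\operatorname{conv}(P_k\cap M)=P_k$. This part is a fixable slip.

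Second, and more seriously, your facet count asserts that no inequality $\langle\cdot,v_j\rangle\geq k$ is redundant in the description of $P_k$. Since $P_k=kP_1$, this claim is completely independent of $k$; writing $\psi\in M_\qq$ for the functional with $\langle\psi,u\rangle=1$ on primitive ray generators $u$ of $\sigma$ (so $a_{E_i}(X)=\langle\psi,v_i\rangle=a$ and $\psi>0$ on $\sigma\setminus\{0\}$), redundancy of the $j$-th inequality is equivalent to $v_j\in\operatorname{conv}(\{v_i\}_{i\neq j})$. For an arbitrary finite set of toric valuations of equal log discrepancy this convex dependence \emph{can} occur (e.g.\ $(1,4)$, $(2,3)$, $(4,1)$ are collinear primitive vectors of equal $\psi$-value), and since $F$ does not act linearly on $N$, you have no a priori control on the convex position of an $F$-orbit. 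So the assertion needs a proof, and I do not see a direct one; the natural repair is a bootstrap through your own equivariance. With $k$ sufficiently divisible, the exceptional prime divisors of $Y\to X$ correspond to a subset $S\subseteq\{E_1,\dots,E_m\}$. This $S$ is nonempty: $P_k$ equals the intersection of its facet half-spaces, so if no $\langle\cdot,v_i\rangle\geq k$ were facet-defining then every facet hyperplane would pass through the origin, forcing $0\in P_k$, a contradiction. And $S$ is $F$-stable: any lift of $f\in F$ to $Y$ is an automorphism of $Y$ over $f$, hence carries exceptional prime divisors to exceptional prime divisors, sending the valuation $E_i$ to $E_{\pi_f(i)}$. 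Transitivity of $F$ on the orbit then forces $S=\{E_1,\dots,E_m\}$, which is exactly the missing facet count. With these two repairs your proof is complete; as written, the key claim is asserted rather than proved, and in the form stated (``holds for $k$ large'') it is false.
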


\begin{proof}
Let $x' \in (X',\Delta')$ be the quotient of $x\in X$ by $F$.
Note that $(X',\Delta')$ is a klt pair.
Let $E'$ be the valuation induced on $x'\in X'$ by $E$.
By Proposition~\ref{prop:ld-finite-quotient}, we know that $a_{E'}(X',\Delta')<1$.
Hence, by~\cite{BCHM10}, there exists a projective birational morphism $Y'\rightarrow X'$ which only extracts $E'$ over $x'$.
Let $Y$ be the normalization of the main component of $Y'\times_{X'}X$.
Then, the projective birational morphism
$Y\rightarrow X$ is $F$-equivariant.
We have that $E$ is a divisor on $Y$.
By construction, the $F$-orbit of $E$ on $Y$ is the exceptional locus.
Hence, $a_{E'}(X,x)=a<1$ for each prime exceptional divisor $E'$ on $Y$ over $X$.
Thus, $Y\rightarrow X$ is a projective birational morphism only extracting non-terminal valuations from a toric singularity.
We conclude that $Y\rightarrow X$ is a toric morphism.
\end{proof}

\section{Geometric Jordan property for affine toric varieties}

In this section, we prove a geometric Jordan property for affine toric varieties.

\begin{proof}[Proof of Theorem~\ref{introthm:affine-geom-jordan}]

Let $X_0$ be an affine toric variety
and $x_0$ its unique fixed point.
Let $G$ be a finite subgroup of ${\rm Aut}(X_0,x_0)$.
By~\cite{BFMS20}, we know that there exists a normal abelian subgroup $A\leqslant G$ of rank at most $n$ and index at most $c(n)$.
Here, $c(n)$ is a constant only depending on $n=\dim(X)$. It suffices to find a bounded subgroup of $A$ whose image on ${\rm Aut}(\widehat{X}_0,x_0)$
is contained in a maximal torus.

By Lemma~\ref{lem:diag}, 
we may find a formally equivalent model
$A\acts (X,x)$
of $A\acts (X_0,x_0)$
so that $A\leqslant G(X,x)\leqslant {\rm Aut}(X,x)$.
Here, $G(X,x)$ 
is a linear algebraic group
whose connected component contains a maximal torus of $X$.
Recall that since both actions are formally equivalent, we have that
\[
A\acts 
{\rm Aut}(\widehat{X},x)\simeq {\rm Aut}(\widehat{X}_0,x_0) \actsr A.
\]
Thus, if we prove an statement for ${\rm Aut}(\widehat{X},x)$,
then the analogous statement will hold for the formal completion of $X_0$ at $x_0$.
From now on, we will work on the affine toric variety $x\in X$
and consider the groups
$A\leqslant G(X,x)\leqslant {\rm Aut}(X,x)$.
We denote by $G^0(X,x)$ the connected component of $G(X,x)$.

By Lemma~\ref{lem:control-G/G^0}, we know that there exists a constant $k(n)$,
only depending on the dimension $n$, so that
any element of $G(X,x)/G^0(X,x)$ has order at most $k(n)$.
Consider the subgroup
\[
A^{k(n)!}:=\{ x^{k(n)!} \mid x\in A\} \leqslant A.
\]
The image of every element
of $A^{k(n)!}$ in $G(X,x)/G^0(X,x)$ is the identity.
Hence, we conclude that
$A^{k(n)!}$ is contained in $G^0(X,x)$.
Note that the index of $A^{k(n)!}$ in $A$ is bounded by
$k(n)!^n$.
Replacing $A$ with $A^{k(n)!}$, 
we may assume that $A\leqslant G^0(X,x)$.
By Lemma~\ref{lem:action-aut-0}, we may assume that
$A$ acts as the identity on the characteristic group ${\rm Cl}(X)$.

By Proposition~\ref{prop:lifting-aut}, we have an exact sequence
\begin{equation}\label{eq:ses-aut} 
1\rightarrow
\mathbb{T}_{{\rm Cl}(X)}
\rightarrow 
\widetilde{G}(X,x)
\rightarrow 
G^0(X,x)
\rightarrow 
1.
\end{equation} 
Let $a_1,\dots,a_k$ be the generators of $A$
and $\tilde{a}_1,\dots,\tilde{a}_k\in 
{\rm Aut}(\mathbb{A}^{\rho+n},0)$ be liftings to $\mathbb{A}^{\rho+n}$ which lie
in $\widetilde{G}(X,x)$.
We know that 
each $\tilde{a}_i$ commutes with the characteristic quasi-torus.
Let $\tilde{A}$ be the pre-image of $A$
in $\widetilde{G}(X,x)$.
Note that $\tilde{A}$ is a reductive group generated by $\mathbb{T}_{{\rm Cl}(X)}$ and the $\tilde{a}_i$'s.
By the diagram~\eqref{diag}, we can assume that
$\tilde{a}_1,\dots,\tilde{a}_k$
are elements of ${\rm GL}_{\rho+n}(\kk)$.

We can write $\mathbb{T}_{{\rm Cl}(X)}\simeq \mathbb{G}_m^\rho \times A_{{\rm Cl}^0(X)}$.
Let $t\in \mathbb{T}_{{\rm Cl}(X)}$ be either a generator
of $\mathbb{G}_m^\rho$ or of $A_{{\rm Cl}^0(X)}$.
Then, we can write
\[
t\cdot (x_1,\dots,x_{\rho+n})=
(t^{w_1}x_1,\dots,t^{w_{\rho+n}}x_{\rho+n}),
\]
for certain integers $w_1,\dots,w_{\rho+n}$.
Up to reordering $x_1,\dots,x_{\rho+n}$,
we may assume that
\[
w_1=\dots=w_{s_1}<w_{s_1+1}=\dots=w_{s_2}<\dots<
w_{s_{l-1}+1}=\dots=w_{s_l},
\]
where $(s_1,\dots,s_l)$ is a partition of $\rho+n$.
Note that $\tilde{a}_i$ commutes with $t$ if and only if
the corresponding element of ${\rm GL}_{\rho+n}(\kk)$ is a diagonal
square block matrix of size $(s_1,\dots,s_l)$.
Thus, each generator of $\mathbb{T}_{{\rm Cl}(X)}$ gives a partition of $\rho+n$.
We replace $(s_1,\dots,s_l)$ with the coarsest partition
which refine all such partitions.
We consider the torus $\mathbb{T}_d:=\mathbb{G}_m^l$ acting diagonally 
on each set of variables of the partition, i.e., 
\[
(t_1,\dots, t_l)\cdot (x_1,\dots,x_{\rho+n}) =
(t_1x_1,\dots,t_1x_{s_1},
t_2x_{s_1+1},\dots,t_2x_{s_2},\dots,t_lx_{s_l}).
\]
The partition $(s_1,\dots,s_l)$ satisfies that
every element $\tilde{a}_i\in {\rm GL}_{\rho+n}(\kk)$ is a diagonal square block matrix of size $(s_1,\dots,s_l)$ and
$\mathbb{T}_{{\rm Cl}(X)}\leqslant \mathbb{T}_d$.
Note that $\sum_{j=1}^l (s_j-1)\leq n$.
Indeed, we have that
\[
n=\dim(X)=
\dim(\mathbb{A}^{\rho+n}/\mathbb{T}_{{\rm Cl}(X)})
\geq 
\dim(\mathbb{A}^{\rho+n}/\mathbb{T}_d)\geq
\sum_{j=1}^l (s_j-1)=:s.
\]
We conclude that
$\tilde{A}\leqslant \bigoplus_{j=1}^l {\rm GL}_{s_j}(\kk)$
with $\sum_{j=1}^l(s_j-1)\leq n$.
Note that $\tilde{A}$ may not be a finite group.
However, by the exact sequence~\eqref{eq:ses-aut}, we have that its image 
\begin{equation}\label{eq:pgl}
\tilde{A}_P \leqslant 
\bigoplus_{j=1}^l{\rm PGL}_{s_j}(\kk)
\end{equation} 
is a finite abelian subgroup of rank at most $n$.
Note that the group on~\eqref{eq:pgl}
is only supported on those 
$s_j$ which are larger than one.
Since $\sum_{j=1}^l (s_j-1)\leq n$, we 
can apply the Jordan property for ${\rm PGL}$.
We conclude that there exists a subgroup 
\[
\tilde{H}_P \leqslant \tilde{A}_P \text{ and } \tilde{f}_P \in \bigoplus_{j=1}^l{\rm PGL}_{s_j}(\kk)
\]
so that 
$\tilde{f}^{-1}_P \tilde{H}_P\tilde{f}_P \leqslant \mathbb{G}_m^s$,
where $\mathbb{G}_m^s$ is a maximal torus of the product of projective general linear groups.
Furthermore, the index of $\tilde{H}_P\leqslant \tilde{A}_P$
is bounded by $e(n)$.
We denote by 
\[
\tilde{f} \in \bigoplus_{j=1}^l {\rm GL}_{s_j}(\kk)\leqslant 
{\rm GL}_{\rho+n}(\kk)
\]
a lifting of $\tilde{f}_P$.
Let $\tilde{H}$ be the pre-image of $H_P$ on ${\rm GL}_{\rho+n}(\kk)$.
Then, we have that 
$\tilde{f}^{-1}\tilde{H}\tilde{f}\leqslant \mathbb{G}_m^{\rho+n}$,
where $\tilde{H}\leqslant \tilde{A}$ is a subgroup of index at most $e(n)$.
Let $f$ be the image of $\tilde{f}$ in ${\rm Aut}(X,x)$.
Let $H$ be the image of $\tilde{H}$ in 
${\rm Aut}(X,x)$.
Note that $H\leqslant A$ has index at most $e(n)$
and 
\[
f^{-1}Hf \leqslant \mathbb{T}_X
\leqslant {\rm Aut}(X,x)
\]
factors through a maximal torus of the affine toric variety.
Since $H$ is a subgroup of $A$, then its rank is at most $n$ as claimed.
Then, we have that 
\[
f^{-1}Hf \leqslant \mathbb{T}_X \leqslant {\rm Aut}(\widehat{X},x) \simeq {\rm Aut}(\widehat{X}_0,x_0),
\]
where by abuse of notation we are denoting $f$ and $H$ their images in ${\rm Aut}(\widehat{X},x)$.
We conclude that the image of $H$ in ${\rm Aut}(\widetilde{X}_0,x_0)$
conjugates to a maximal torus.
\end{proof}

\begin{remark}\label{rem:com}{\em  
In the notation of the proof of Theorem~\ref{introthm:affine-geom-jordan}.
If the group $G$ commutes with a subtorus $\mathbb{T}$ of $\mathbb{T}_X$,
then we can choose $f$ so that it commutes with $\mathbb{T}$ as well.}
\end{remark}

\begin{proof}[Proof of Corollary~\ref{introcor:proj-geom-jordan}]
Let $G$ be a finite group acting on a projective toric variety $X$ of dimension $n$.
Let $H$ be an ample $G$-invariant Cartier divisor on $X$.
For each $g\in G$ and $f\in H^0(X,\mathcal{O}_X(H))$,
we have that $g^*f\in H^0(X,\mathcal{O}_X(H))$.
Hence, $G$ acts on the affine variety
\[
C_X:={\rm Spec}\left( \oplus_{u\geq 0}H^0(X,\mathcal{O}_X(uH))\chi^u \right).
\]
Note that $C_X$ is an affine cone singularity and $G$ acts fixing the vertex.
We denote the vertex by $c$.
We denote by $\mathbb{T}_0$ the one-dimensional torus action on $C_X$.
Since $H$ is a Cartier divisor on a projective toric variety,
we can write $H\sim T_X$, 
where $T_X$ is a torus invariant divisor.
Note that we are not assuming $T_X$ is $G$-invariant.
Hence, 
\[
C_X\simeq {\rm Spec}\left( \oplus_{u\geq 0}H^0(X,\mathcal{O}_X(uT_X))\chi^u \right)
\]
is an affine toric variety.
Thus, $G$ acts on a $(n+1)$-dimensional affine toric variety.
The torus $\mathbb{T}_0\times\mathbb{T}_X$ is a maximal torus acting on $C_X$.
Furthermore, the group $G$ commutes with $\mathbb{T}_0$.
By Theorem~\ref{introthm:affine-geom-jordan}, there exists a constant $N(n+1)$, 
so that $G$ admits a normal abelian subgroup $A\leqslant G$ of index at most $N(n+1)$
and
\[
f^{-1}Af \leqslant \mathbb{T}_0\times \mathbb{T}_X 
\leqslant {\rm Aut}(\widehat{C}_X,c).
\]
By remark~\ref{rem:com}, we may assume that $f$ commutes with $\mathbb{T}_0$.
Thus, $f$ descends to an automorphism $f_X$ of $X$.
We conclude that $f_X^{-1}Af_X\leqslant \mathbb{T}_X\leqslant {\rm Aut}(X)$
as claimed.
\end{proof}

\section{Small quotient minimal log discrepancies}
In this section, we prove the main theorem
of this article concerning minimal log discrepancies
of toric quotient singularities.

\begin{proof}[Proof of Theorem~\ref{introthm:quotient-diagram}]
Let $x\in (X,B)$ be a $n$-dimensional toric quotient pair.
By definition,
there exists a Galois morphism 
$\pi_Z\colon Z\rightarrow X$ so that
\[
\pi_Z^*(K_X+B)=K_Z.
\]
Here, $Z$ is an affine toric variety of dimension $n$.
Let $z$ be the pre-image of $x$ in $Z$.
Let $G$ be the subgroup of ${\rm Aut}(Z,z)$
so that $X=Z/G$.
By Theorem~\ref{introthm:affine-geom-jordan}, 
we know that there exists 
a normal abelian subgroup $A\leqslant G$ with the following properties:
$A$ has index at most $N(n)$,
rank at most $n$,
and $A<\mathbb{T}_X \leqslant{\rm Aut}(\widehat{X},x)$.
Let $N:=G/A$ and $Y_0:=Z/A$.
Let $y_0$ be the image of $z$ on $y_0$.
By Proposition~\ref{prop:quotient-toric}, 
we conclude that $Y_0$ is formally toric at $y_0$.
Note that $N$ acts on $Y_0$ and fixes $y_0$.
Furthermore, $X$ is the quotient of $Y_0$ by $N$.
We denote by
$\pi_{Y_0}\colon Z\rightarrow Y_0$
and
$\pi_X\colon Y_0\rightarrow X$
the quotient morphisms.
By Proposition~\ref{prop:log-quotient},
we can find a log pair structure $(Y_0,B_{Y_0})$ so that
\[
K_Z=\pi^*_Y(K_{Y_0}+B_{Y_0})
\text{ and }
K_{Y_0}+B_{Y_0}=\pi^*_X(K_X+B).
\]
By Proposition~\ref{prop:quotient-toric},
we know that the log pair $(Y_0,B_{Y_0})$ is toric
with standard coefficients.
In particular, it is a klt pair (Proposition~\ref{prop:toric-implies-klt}).
By construction, 
the degree of the morphism $Y_0\rightarrow X$ is bounded by $N(n)$.
Indeed, this is an upper bound for the order of $N$.\\

Assume that ${\rm mld}_x(X,B)<N(n)^{-1}$.
We may assume that $N(n)>1$ so that $N(n)^{-1}$.
In particular, the minimal log discrepancy is non-terminal.
Let $\phi\colon X'\rightarrow X$ 
be a projective birational morphism that
extracts a divisor computing the minimal log discrepancy.
The existence of this morphism follows from~\cite{BCHM10}*{Corollary 1.4.3}.
Let $Y'_0$ be the normalization
of the main component of $X'\times_X Y_0$.
Then, we have a commutative diagram as follows
\[
\xymatrix{
Y'_0 \ar[d]_-{\phi_{Y_0}}\ar[r]^-{\pi_{X'}} & X'\ar[d]_-{\phi} \\
(Y_0,B_{Y_0})\ar[r]^-{\pi_X} & (X,B). 
}
\]
Here, the vertical arrows are projective birational morphisms
and the horizontal arrows are Galois covers.
Let $B_{X'}$ be the strict transform of $B$ on $X'$
and $B_{Y'_0}$ be the strict transform of $B_{Y_0}$ on $Y'_0$.
Let $E_{X'}$ be the unique prime exceptional divisor extracted by $\phi$.
Let $F_1,\dots,F_k$ be the prime
exceptional divisors extracted by $\phi_{Y_0}$.
Note that ${\pi_{X'}}_* F_i=E_{X'}$ for each
$i\in \{1,\dots,k\}$.
We can write
\begin{equation} \label{eq:first}
K_{X'}+B_{X'}+(1-{\rm mld}_x(X,B))E_{X'} = \phi^*(K_X+B).
\end{equation} 
By Proposition~\ref{prop:pull-back}
and equality~\eqref{eq:first},
we have that
\begin{align}\label{eq:long-eq}
\pi_{Y_0}^*(K_{Y_0}+B_{Y_0})&= 
\pi_{X'}^*\phi^*(K_X+B)=
\pi^*_{X'}(K_{X'}+B_{X'}+(1-{\rm mld}_x(X,B))E_{X'})=\\ \nonumber 
& K_{Y'_0}+B_{Y'_0}+\sum_{i=1}^k(1-r{\rm mld}_x(X,B))F_i. 
\end{align}
Here, $r$ is the ramification index of $Y'_0\rightarrow X'$
at the generic point of $E_{X'}$.
The last equality in~\eqref{eq:long-eq}, follows from the formula of log discrepancies
under finite quotients (Proposition~\ref{prop:ld-finite-quotient}).
Since the morphism $Y'_0\rightarrow X'$ has degree
at most $N$, we conclude that $r\leq N$.
We conclude that $a_{F_i}(Y,B_Y)<1$ for each 
$i\in \{1,\dots,k\}$.
In particular, $\phi_{Y_0} \colon Y'_0 \rightarrow Y_0$ is a projective birational morphism
which only extracts non-terminal places 
of the toric pair $(Y_0,B_{Y_0})$.
By Proposition~\ref{prop:non-terminal-toric}, we conclude that
$\phi_{Y_0}\colon Y_0'\rightarrow Y_0$ is a projective toric morphism.\\

Let $N_0\leqslant N$ be the maximal normal subgroup
which acts on $F_1\cup\dots\cup F_k$ as the identity.
By Proposition~\ref{prop:normal-subgroup-identity},
we know that $N_0< \mathbb{G}_m^n \leq {\rm Aut}(\widehat{Y}_0,B_{Y_0})$.
Let $(Y,B_Y)$ be the log quotient of $(Y_0,B_{Y_0})$ by $N_0$.
By Proposition~\ref{prop:quotient-toric},
we know that $(Y,B_Y)$ is a formally toric pair at $y$.
Let $N_1:=N/N_0$.
We have a commutative diagram as follows:
\[
\xymatrix{
Y'_0 \ar[d]_-{\phi_{Y_0}}\ar[r]^-{/N_0} & Y'\ar[d]_-{\phi_Y}\ar[r]^-{/N_1} & X'\ar[d]_-{\phi} \\
(Y_0,B_{Y_0})\ar[r] & (Y,B_Y)\ar[r] & (X,B). 
}
\]
Let $G_i$ be the image of $F_i$ on $Y'$
for each $i\in \{1,\dots, k\}$.
By construction, we have that
the action of $N_1$ on
$G_1\cup\dots\cup G_k$
is unramified in codimension one.
In particular, the ramification index of 
$Y'\rightarrow X'$ over $E_{X'}$ is one.
Hence, for any $i$, we have that
\[
a_{G_i}(Y,B_Y)={\rm mld}_x(X,B).
\]
We claim that any such $G_i$ computes
the minimal log discrepancy of $(Y,B_Y)$ at $y$.
Assume this is not the case.
Then, there exists a prime divisor $G'$ over $Y$
with center $y\in Y$ for which
\[
a_G(Y,B_Y)< a_{G_i}(Y,B_Y)={\rm mld}_x(X,B)<1.
\]
By Proposition~\ref{prop:N-equiv-extraction},
there exists a $N_1$-equivariant projective birational morphism 
$Y''\rightarrow (Y,B_Y)$
that extracts $G'$ and its
exceptional divisor is the $N_1$-orbit of $G'$.
Let $X''$ be the quotient of $Y''$ by $N_1$.
Then, we have a diagram as follows
\[
\xymatrix{
Y'' \ar[d]_-{\psi_Y}\ar[r]^-{/N_1} & X''\ar[d]_-{\psi} \\
(Y_0,B_{Y_0})\ar[r]^-{/N_1} & (X,B). 
}
\]
By construction,
the projective birational morphism
$\psi\colon X''\rightarrow X$
extracts a unique prime divisor $E_{X''}$.
By Proposition~\ref{prop:ld-finite-quotient}, we have that
\[
a_{E_{X''}}(X,B)=
\frac{a_G(Y,B_Y)}{r_0}.
\]
Here, $r_0$ is the ramification
index of $Y''\rightarrow X''$ at the generic point of $E_{X''}$.
Since $r_0\geq 1$, we conclude that
\[
a_{E_{X''}}(X,B)=
\frac{a_G(Y,B_Y)}{r_0}<
a_{G}(Y,B_Y)< {\rm mld}_x(X,B).
\]
Note that $E_{X''}$ is a prime divisor whose image on $X$ equals $x$.
Hence, the strict inequality
$a_{E_{X''}}(X,B)<{\rm mld}_x(X,B)$
gives us a contradiction.
We conclude that
\[
{\rm mld}_y(Y,B_Y)=
a_{G_i}(Y,B_Y)=
a_{E_{X'}}(X,B)=
{\rm mld}_x(X,B).
\]
Note that the affine toric variety
$(Y,B_Y)$
together with the Galois morphisms
$Z\rightarrow Y\rightarrow X$
satisfies all the conditions of the theorem.
This concludes the proof.
\end{proof}

\begin{proof}[Proof of Theorem~\ref{introthm:small-quotient-mlds}] 
Let $G<{\rm GL}_n(\kk)$ be a finite subgroup
and $X:=\kk^n/G$.
Let $Q\leqslant G$ be the normal subgroup
generated by quasi-reflections.
We let $G_0:=G/Q$.
By Chevalley-Shepard-Todd Theorem, we know that 
$\kk^n/Q\simeq \kk^n$.
Furthermore, the action of $G_0$ in $\kk^n$ is unramified in codimension one.
Hence, we can write 
$X\simeq \kk^n/G_0$
and $\pi\colon \kk^n \rightarrow X$
for the quotient morphism.
We have that
$\pi^*K_X=K_{\kk^n}$.
Let
$\epsilon_n$ as in the statement
of Theorem~\ref{introthm:small-quotient-mlds}
and consider the set
${\rm Qmld}_n\cap (0,\epsilon_n)$.
Let 
\[
{\rm Tmld}_n:= 
\{ {\rm mld}_t(T) \mid 
\text{ 
$t\in T$ is a toric singularity
and $\dim(T)=n$}
\}.
\]
By~\cite{Amb06}, we know that the set
${\rm Tmld}_n\cap (0,\epsilon_n)$
satisfies the ascending chain condition.
Indeed, the set ${\rm Tmld}_n$ satisfies the ascending chain condition.
Note that we have a natural inclusion 
\[ 
{\rm Tmld}_n(T) \cap (0,\epsilon_n)
\subset  
{\rm Qmld}_n(T)\cap  (0,\epsilon_n).
\]
Indeed, every toric minimal log discrepancy is computed by a $\qq$-factorial toric minimal log discrepancy,
which is, in particular, a quotient
minimal log discrepancy.
By Theorem~\ref{introthm:small-quotient-mlds}, we know that there are Galois quotients
\[
\kk^n\rightarrow T \rightarrow X, 
\]
so that $T$ is an affine toric singularity
and
\[
{\rm mld}_t(T)={\rm mld}_x(X),
\]
where $t$ is the image of $\{0\} \in \kk^n$ in $T$.
In particular, we conclude that the equality 
\begin{equation}\label{eq:equality-tmld-qmld} 
{\rm Tmld}_n\cap (0,\epsilon_n) 
= 
{\rm Qmld}_n\cap (0,\epsilon_n)
\end{equation} 
holds.
Hence, the set
${\rm Qmld}_n\cap (0,\epsilon_n)$
satisfies the ascending chain condition.

Now, we turn to prove the statement about accumulation points.
Let ${\rm Tmld}_{n,S}$ be the set of
${\rm mld}_t(T,\Delta_T)$ so that
$(T,\Delta_T)$ is a $n$-dimensional toric pair, $\Delta_T$ has standard coefficients, and $t\in T$ is a closed point.
By~\cite{Amb06}*{Theorem 1.(3)}, we know that 
\[
{\rm Acc}\left(
{\rm Tmld}_n 
\right) 
\subset 
\{0\} 
\cup 
\bigcup_{1\leq d\leq n-1}
{\rm Tmld}_{d,S}.
\] 
Here, ${\rm Acc}$ stands for the accumulation points of the set.
Then, by equality~\ref{eq:equality-tmld-qmld}, we obtain that
\[
{\rm Acc}\left({\rm Qmld}_n\cap(0,\epsilon_n)\right)=
{\rm Acc}\left({\rm 
Tmld}_n\cap(0,\epsilon_n)\right)
\subset
\]
\[
\{0\} 
\cup 
\bigcup_{1\leq d\leq n-1}
{\rm Tmld}_{d,S}\cap (0,\epsilon_n)
\subset 
\{0\} 
\cup 
\bigcup_{1\leq d\leq n-1}
{\rm Qmld}_{d,S}\cap (0,\epsilon_n).
\]
This proves the second claim.
\end{proof}

\begin{proof}[Proof of Corollary~\ref{introcor:index}]
Let $x\in (X,B)$ be a toric quotient singularity of dimension $n$
and $m={\rm mld}_x(X,B)\in (0,\epsilon_n)$.
Here, we consider $\epsilon_n$ as in Theorem~\ref{introthm:quotient-diagram}.
Then, we can find a formally toric pair $(T,B_T)$
and a Galois quotient
$T\rightarrow X$ so that
\begin{itemize}
\item ${\rm mld}_t(T,\Delta_T)={\rm mld}_x(X,B)$
where $t$ is the pre-image of $x$ in $T$, and 
\item the degree of the finite map
$\pi \colon T\rightarrow X$ is bounded above by
$\epsilon_n^{-1}$.
\end{itemize}
By~\cite{Amb09}*{Theorem 2.1}, we can find a constant $I(m,n)$, only depending on $m$ and $n$, so that
\[
I(m,n)(K_T+\Delta_T)\sim 0.
\]
We conclude that 
\[
\deg(\pi)I(m,n)(K_X+B)\sim 0,
\]
formally at $x\in X$.
Note that 
$\deg(\pi)I(m,n)\leq I(m,n)\epsilon_n^{-1}$
is bounded by a constant
which only depends on $m$ and $n$.
We conclude that the Cartier index
of $K_X+B$, formally at $x\in X$, 
takes values on a finite set
$\mathcal{F}(m,n)$
which only depends on $m$ and $n$.
\end{proof}

\begin{bibdiv}
\begin{biblist}

\bib{AB13}{article}{
   author={Arzhantsev, Ivan},
   author={Bazhov, Ivan},
   title={On orbits of the automorphism group on an affine toric variety},
   journal={Cent. Eur. J. Math.},
   volume={11},
   date={2013},
   number={10},
   pages={1713--1724},
   issn={1895-1074},
   review={\MR{3080229}},
   doi={10.2478/s11533-013-0273-1},
}

\bib{Ale93}{article}{
   author={Alexeev, Valery},
   title={Two two-dimensional terminations},
   journal={Duke Math. J.},
   volume={69},
   date={1993},
   number={3},
   pages={527--545},
   issn={0012-7094},
   review={\MR{1208810}},
   doi={10.1215/S0012-7094-93-06922-0},
}

\bib{ADHL15}{book}{
   author={Arzhantsev, Ivan},
   author={Derenthal, Ulrich},
   author={Hausen, J\"{u}rgen},
   author={Laface, Antonio},
   title={Cox rings},
   series={Cambridge Studies in Advanced Mathematics},
   volume={144},
   publisher={Cambridge University Press, Cambridge},
   date={2015},
   pages={viii+530},
   isbn={978-1-107-02462-5},
   review={\MR{3307753}},
}

\bib{AG10}{article}{
   author={Arzhantsev, I. V.},
   author={Ga\u{\i}fullin, S. A.},
   title={Cox rings, semigroups, and automorphisms of affine varieties},
   language={Russian, with Russian summary},
   journal={Mat. Sb.},
   volume={201},
   date={2010},
   number={1},
   pages={3--24},
   issn={0368-8666},
   translation={
      journal={Sb. Math.},
      volume={201},
      date={2010},
      number={1-2},
      pages={1--21},
      issn={1064-5616},
   },
   review={\MR{2641086}},
   doi={10.1070/SM2010v201n01ABEH004063},
}

\bib{Amb06}{article}{
   author={Ambro, Florin},
   title={The set of toric minimal log discrepancies},
   journal={Cent. Eur. J. Math.},
   volume={4},
   date={2006},
   number={3},
   pages={358--370},
   issn={1895-1074},
   review={\MR{2233855}},
   doi={10.2478/s11533-006-0013-x},
}

\bib{Amb09}{article}{
   author={Ambro, Florin},
   title={On the classification of toric singularities},
   conference={
      title={Combinatorial aspects of commutative algebra},
   },
   book={
      series={Contemp. Math.},
      volume={502},
      publisher={Amer. Math. Soc., Providence, RI},
   },
   date={2009},
   pages={1--3},
   review={\MR{2583269}},
   doi={10.1090/conm/502/09852},
}

\bib{BCHM10}{article}{
   author={Birkar, Caucher},
   author={Cascini, Paolo},
   author={Hacon, Christopher D.},
   author={McKernan, James},
   title={Existence of minimal models for varieties of log general type},
   journal={J. Amer. Math. Soc.},
   volume={23},
   date={2010},
   number={2},
   pages={405--468},
   issn={0894-0347},
   review={\MR{2601039}},
   doi={10.1090/S0894-0347-09-00649-3},
}

\bib{BFMS20}{misc}{
  author = {Braun, Lukas},
  author = {Filipazzi, Stefano},
  author = {Moraga, Joaqu\'in},
  author = {Svaldi, Roberto},
  title={The Jordan property for local fundamental groups},
  year = {2020},
  note = {https://arxiv.org/abs/2006.01253},
}

\bib{Bor91}{book}{
   author={Borel, Armand},
   title={Linear algebraic groups},
   series={Graduate Texts in Mathematics},
   volume={126},
   edition={2},
   publisher={Springer-Verlag, New York},
   date={1991},
   pages={xii+288},
   isbn={0-387-97370-2},
   review={\MR{1102012}},
   doi={10.1007/978-1-4612-0941-6},
}

\bib{Bor97}{article}{
   author={Borisov, Alexandr},
   title={Minimal discrepancies of toric singularities},
   journal={Manuscripta Math.},
   volume={92},
   date={1997},
   number={1},
   pages={33--45},
   issn={0025-2611},
   review={\MR{1427666}},
   doi={10.1007/BF02678179},
}

\bib{Bra20}{misc}{
  author = {Braun, Lukas},
  title={The local fundamental group of a Kawamata log terminal singularity is finite},
  year = {2020},
  note = {https://arxiv.org/abs/2004.00522v1},
}

\bib{CLS11}{book}{
   author={Cox, David A.},
   author={Little, John B.},
   author={Schenck, Henry K.},
   title={Toric varieties},
   series={Graduate Studies in Mathematics},
   volume={124},
   publisher={American Mathematical Society, Providence, RI},
   date={2011},
   pages={xxiv+841},
   isbn={978-0-8218-4819-7},
   review={\MR{2810322}},
   doi={10.1090/gsm/124},
}

\bib{Cox95}{article}{
   author={Cox, David A.},
   title={The homogeneous coordinate ring of a toric variety},
   journal={J. Algebraic Geom.},
   volume={4},
   date={1995},
   number={1},
   pages={17--50},
   issn={1056-3911},
   review={\MR{1299003}},
}

\bib{dFD14}{article}{
   author={de Fernex, Tommaso},
   author={Docampo, Roi},
   title={Terminal valuations and the Nash problem},
   journal={Invent. Math.},
   volume={203},
   date={2016},
   number={1},
   pages={303--331},
   issn={0020-9910},
   review={\MR{3437873}},
   doi={10.1007/s00222-015-0597-5},
}

\bib{Ful93}{book}{
   author={Fulton, William},
   title={Introduction to toric varieties},
   series={Annals of Mathematics Studies},
   volume={131},
   note={The William H. Roever Lectures in Geometry},
   publisher={Princeton University Press, Princeton, NJ},
   date={1993},
   pages={xii+157},
   isbn={0-691-00049-2},
   review={\MR{1234037}},
   doi={10.1515/9781400882526},
}

\bib{FZ05}{article}{
   author={Flenner, Hubert},
   author={Zaidenberg, Mikhail},
   title={Locally nilpotent derivations on affine surfaces with a $\mathbb{C}^*$-action},
   journal={Osaka J. Math.},
   volume={42},
   date={2005},
   number={4},
   pages={931--974},
   issn={0030-6126},
   review={\MR{2196000}},
}

\bib{HK00}{article}{
   author={Hu, Yi},
   author={Keel, Sean},
   title={Mori dream spaces and GIT},
   note={Dedicated to William Fulton on the occasion of his 60th birthday},
   journal={Michigan Math. J.},
   volume={48},
   date={2000},
   pages={331--348},
   issn={0026-2285},
   review={\MR{1786494}},
   doi={10.1307/mmj/1030132722},
}

\bib{HLS19}{article}{
   author={Han, Jingjun},
   author={Liu, Jihao},
   author={Shokurov, Vyacheslav V.},
   title={ACC for minimal log discrepancies of exceptional singularities},
   note={https://arxiv.org/abs/1903.04338},
   date={2019},
}

\bib{HM89}{article}{
   author={Hauser, Herwig},
   author={M\"{u}ller, Gerd},
   title={Algebraic singularities have maximal reductive automorphism
   groups},
   journal={Nagoya Math. J.},
   volume={113},
   date={1989},
   pages={181--186},
   issn={0027-7630},
   review={\MR{986444}},
   doi={10.1017/S0027763000001343},
}

\bib{Hum75}{book}{
   author={Humphreys, James E.},
   title={Linear algebraic groups},
   note={Graduate Texts in Mathematics, No. 21},
   publisher={Springer-Verlag, New York-Heidelberg},
   date={1975},
   pages={xiv+247},
   review={\MR{0396773}},
}

\bib{Jia19}{misc}{
  author = {Jiang, Chen},
  title={A gap theorem for minimal log discrepancies of non-canonical singularities in dimension three},
  year = {2019},
  note = {https://arxiv.org/abs/1904.09642},
}

\bib{Kaw14}{article}{
   author={Kawakita, Masayuki},
   title={Discreteness of log discrepancies over log canonical triples on a
   fixed pair},
   journal={J. Algebraic Geom.},
   volume={23},
   date={2014},
   number={4},
   pages={765--774},
   issn={1056-3911},
   review={\MR{3263668}},
   doi={10.1090/S1056-3911-2014-00630-5},
}

\bib{KM98}{book}{
   author={Koll\'{a}r, J\'{a}nos},
   author={Mori, Shigefumi},
   title={Birational geometry of algebraic varieties},
   series={Cambridge Tracts in Mathematics},
   volume={134},
   note={With the collaboration of C. H. Clemens and A. Corti;
   Translated from the 1998 Japanese original},
   publisher={Cambridge University Press, Cambridge},
   date={1998},
   pages={viii+254},
   isbn={0-521-63277-3},
   review={\MR{1658959}},
   doi={10.1017/CBO9780511662560},
}

\bib{Kol13}{book}{
   author={Koll\'{a}r, J\'{a}nos},
   title={Singularities of the minimal model program},
   series={Cambridge Tracts in Mathematics},
   volume={200},
   note={With a collaboration of S\'{a}ndor Kov\'{a}cs},
   publisher={Cambridge University Press, Cambridge},
   date={2013},
   pages={x+370},
   isbn={978-1-107-03534-8},
   review={\MR{3057950}},
   doi={10.1017/CBO9781139547895},
}

\bib{Liu18}{misc}{
  author = {Liu,Jihao},
  title={Toward the equivalence of the ACC for $a$-log canonical thresholds and the ACC for minimal log discrepancies},
  year = {2018},
  note = {https://arxiv.org/abs/1809.04839},
}

\bib{LX19}{misc}{
  author = {Liu,Jihao},
  author = {Xiao,Liudan},
  title={An optimal gap of minimal log discrepancies of threefold non-canonical singularities},
  year = {2019},
  note = {https://arxiv.org/abs/1909.08759},
}

\bib{Mal19}{misc}{
  author = {Mallory,Devlin},
  title={Minimal log discrepancies of determinantal varieties via jet schemes},
  year = {2019},
  note = {https://arxiv.org/abs/1905.05379},
}

\bib{Mor18a}{misc}{
  author = {Moraga, Joaqu\'in},
  title={On minimal log discrepancies and Koll\'ar components},
  year = {2018},
  note = {https://arxiv.org/abs/1810.10137},
}

\bib{Mor20a}{misc}{
  author = {Moraga, Joaqu\'in},
  title={Fano type surfaces with large cyclic automorphisms},
  year = {2020},
  note = {https://arxiv.org/abs/2001.03797},
}

\bib{Mor20b}{misc}{
  author = {Moraga, Joaqu\'in},
  title={Kawamata log terminal singularities of full rank},
  year = {2020},
  note = {https://arxiv.org/abs/2001.03797},
}

\bib{MN18}{article}{
   author={Musta\c{t}\u{a}, Mircea},
   author={Nakamura, Yusuke},
   title={A boundedness conjecture for minimal log discrepancies on a fixed
   germ},
   conference={
      title={Local and global methods in algebraic geometry},
   },
   book={
      series={Contemp. Math.},
      volume={712},
      publisher={Amer. Math. Soc., Providence, RI},
   },
   date={2018},
   pages={287--306},
   review={\MR{3832408}},
   doi={10.1090/conm/712/14351},
}

\bib{Nak16}{article}{
   author={Nakamura, Yusuke},
   title={On minimal log discrepancies on varieties with fixed Gorenstein
   index},
   journal={Michigan Math. J.},
   volume={65},
   date={2016},
   number={1},
   pages={165--187},
   issn={0026-2285},
   review={\MR{3466821}},
}
	
\bib{Sho91}{misc}{
  author = {Shokurov, V.V.},
  title={A.C.C in codimension two},
  year = {1991},
  note = {(preprint)},
}

\bib{Sho92}{article}{
   author={Shokurov, V. V.},
   title={Three-dimensional log perestroikas},
   language={Russian},
   journal={Izv. Ross. Akad. Nauk Ser. Mat.},
   volume={56},
   date={1992},
   number={1},
   pages={105--203},
   issn={1607-0046},
   translation={
      journal={Russian Acad. Sci. Izv. Math.},
      volume={40},
      date={1993},
      number={1},
      pages={95--202},
      issn={1064-5632},
   },
   review={\MR{1162635}},
   doi={10.1070/IM1993v040n01ABEH001862},
}

\bib{Sho04}{article}{
   author={Shokurov, V. V.},
   title={Letters of a bi-rationalist. V. Minimal log discrepancies and
   termination of log flips},
   language={Russian, with Russian summary},
   journal={Tr. Mat. Inst. Steklova},
   volume={246},
   date={2004},
   number={Algebr. Geom. Metody, Svyazi i Prilozh.},
   pages={328--351},
   issn={0371-9685},
   translation={
      journal={Proc. Steklov Inst. Math.},
      date={2004},
      number={3(246)},
      pages={315--336},
      issn={0081-5438},
   },
   review={\MR{2101303}},
}

\bib{SMS19}{misc}{
  author = {Sancho, M.T.},
  author= {Moreno, J.P.},
  author= {Sancho, Carlo},
  title={Automorphism group of a toric variety},
  year = {2019},
  note = {https://arxiv.org/abs/1809.09070},
}

\bib{TX17}{article}{
   author={Tian, Zhiyu},
   author={Xu, Chenyang},
   title={Finiteness of fundamental groups},
   journal={Compos. Math.},
   volume={153},
   date={2017},
   number={2},
   pages={257--273},
   issn={0010-437X},
   review={\MR{3604863}},
   doi={10.1112/S0010437X16007867},
}	
	
\bib{Xu14}{article}{
   author={Xu, Chenyang},
   title={Finiteness of algebraic fundamental groups},
   journal={Compos. Math.},
   volume={150},
   date={2014},
   number={3},
   pages={409--414},
   issn={0010-437X},
   review={\MR{3187625}},
   doi={10.1112/S0010437X13007562},
}

\end{biblist}
\end{bibdiv}

\end{document}